\newtheorem{theorem}{Theorem}[section]
\newtheorem{lemma}[theorem]{Lemma}
\newtheorem{notation}[theorem]{Notation}
\newtheorem{question}{Question}
\newtheorem*{theorem*}{Theorem 2.3 (1)}
\newtheorem*{theorem**}{Theorem 2.3 (2)}
\numberwithin{theorem}{section}
\numberwithin{equation}{section}
\newcommand{\la}{\langle}
\newcommand{\ra}{\rangle}
\newcommand{\Comp}{\mathbb{C}}
\newcommand{\n}{\mathbb{N}}
\newcommand{\z}{\mathbb{Z}}
\DeclareRobustCommand{\rGamma}{\text{\reflectbox{$\Gamma$}}}
\begin{document}
\title[{ A central limit theorem for partial transposes}]{A central limit theorem for partial transposes of multipartite Wishart matrices}

\author{Gyunam Park}
\address{Gyunam Park, 
Department of Mathematics Education, Seoul National University, 
Gwanak-Ro 1, Gwanak-Gu, Seoul 08826, Republic of Korea}
\email{pgn518@snu.ac.kr }

\author{Sang-Gyun Youn}
\address{Sang-Gyun Youn, 
Department of Mathematics Education, Seoul National University, 
Gwanak-Ro 1, Gwanak-Gu, Seoul 08826, Republic of Korea}
\email{s.youn@snu.ac.kr }

\maketitle

\begin{abstract}
The partial transposition from quantum information theory provides a new source to distill the so-called asymptotic freeness without the assumption of classical independence between random matrices. Indeed, a recent paper \cite{MP19} established asymptotic freeness between partial transposes in the bipartite situation. In this paper, we prove almost sure asymptotic freeness in the general multipartite situation and establish a central limit theorem for the partial transposes.
\end{abstract}

\section{Introduction}

The origin of free probability theory can be traced back to Voiculescu's works around 1985, and one of the key discoveries is the so-called {\it asymptotic freeness} of independent random matrices with Gaussian entries \cite{Vo91}. This phenomenon extends beyond the Gaussian models and applies to various other models as well. Amongst them are non-Gaussian Wigner matrices \cite{Dy93}, independent Haar unitary random matrices \cite{Vo98} and random permutation matrices \cite{Ni93}, etc.

It is worth noting that all the results mentioned above are assuming independence between the random matrices, resulting in the phenomenon of asymptotic freeness. A natural question arising from this perspective is whether there are fundamentally different approaches to obtaining asymptotic freeness. A positive answer to this question is obtained from the partial transposition \cite{MP19}, which plays a crucial role in quantum information theory (QIT). Indeed, partial transposition is crucial in the problem of entanglement of quantum states and quantum channels \cite{Pe96,HHH96,Wo76,Ch80}, as well as in computing the transmission rate of information \cite{Sh02,SmSm12}, PPT$^2$ conjecture \cite{Ch12,KMP18,RJP18,CMW19,CYT19} and so forth.

This paper focuses on partial transposes of Wishart random matrices, which arise naturally in the context of QIT since the normalizations of Wishart matrices are standard models for {\it random quantum states} \cite{Br96,Ha98,ZS01,SZ04}. An important recent discovery is the asymptotic freeness between partial transposes of a Wishart matrix in the bipartite situation \cite{MP19}. Let $d_1,d_2$ and $p$ be natural numbers, and let $G_{d_1d_2,p}$ be a $d_1d_2\times p$ random matrix with independent complex Gaussian random variables whose mean and variance are $0$ and $1$ respectively. Then the Wishart matrix $W_{d_1d_2,p}$ is given by
\begin{equation}
    W_{d_1d_2,p}=\frac{1}{d_1d_2}G_{d_1d_2,p}G_{d_1d_2,p}^* \in M_{d_1d_2}(\Comp) .
\end{equation}
Let us denote by $T_d$ or simply by $T$ the transpose map $A\mapsto A^t$ on $M_d(\Comp)$ if there is no possibility of confusion. Then the partial transposes of $W_{d_1d_2,p}\in M_{d_1d_2}(\Comp)\cong M_{d_1}(\Comp)\otimes M_{d_2}(\Comp)$ in the bipartite situation are given by
\begin{equation}
    \left\{\begin{array}{llll}

    \vspace{3pt}
    
    W_{d_1d_2,p}&=(\text{id}_{d_1}\otimes \text{id}_{d_2})(W_{d_1d_2,p}),\\

\vspace{3pt}
    
    W_{d_1d_2,p}^{\Gamma}&=(\text{id}_{d_1}\otimes T_{d_2})(W_{d_1d_2,p}),\\

    \vspace{3pt}
    W_{d_1d_2,p}^{\rGamma}&=(T_{d_1}\otimes \text{id}_{d_2})(W_{d_1d_2,p}),\\

\vspace{3pt}
    
    W_{d_1d_2,p}^t &=(T_{d_1}\otimes T_{d_2})(W_{d_1d_2,p}).
    \end{array} \right .
\end{equation}

One of the main results of \cite{MP19} is that the family
\begin{equation}
    \left\{  W_{d_1d_2,p}, W_{d_1d_2,p}^{\Gamma}, W_{d_1d_2,p}^{\rGamma}, W_{d_1d_2,p}^t\right\}
\end{equation}
is asymptotically free under the assumption $\lim d_1=\infty=\lim d_2$ with $\displaystyle \lim \frac{p}{d_1d_2}=c\in (0,\infty)$. From QIT perspective, it is natural to consider a multipartite scenario of quantum communication. Indeed, the bipartite setting is the standard framework to model interactions between two parties, and it is standard to use a multi-fold tensor product to describe possible interactions between multiple parties. In the general $n$-partite situation, we have $2^n$ types of partial transposes of 
\begin{equation}
    W_{d_1\cdots d_n,p}=\frac{1}{d_1\cdots d_n}G_{d_1\cdots d_n,p}G_{d_1\cdots d_n,p}^*\in  M_{d_1}(\Comp)\otimes \cdots \otimes M_{d_n}(\Comp),
\end{equation}
given by
\begin{equation}
    W_{d_1\cdots d_n,p}^{\sigma}=(T_{d_1}^{\sigma_1}\otimes  \cdots \otimes T_{d_n}^{\sigma_n})(W_{d_1\cdots d_n,p}),
\end{equation}
where $\sigma=(\sigma_1,\sigma_2,\cdots,\sigma_n)$ is an arbitrary element of $\left\{0,1\right\}^n$. This paper focuses on two research questions for these partial transposes $W^{\sigma}_{d_1\cdots d_n,p}$. The first main question is as follows.
\begin{question} 
Is the family $\left\{W^{\sigma}_{d_1\cdots d_n,p}\right\}_{\sigma\in \left\{0,1\right\}^n}$ of partial transposes asymptotically free in the general $n$-partite situation assuming $ \lim d_j=\infty$ for all $j=1,2,\cdots,n$ with $\lim \frac{p}{d_1\cdots d_n}=c\in (0,\infty)$? What about almost sure asymptotic freeness?
\end{question}

Note that a partial positive answer to the above {\bf Question 1} can be obtained from a recent paper \cite{MP22}. Indeed, \cite[Corollary 4.15]{MP22} provides an asymptotically free family consisting of $2n$ partial transposes out of $2^n$ choices. We establish the positive answer with full generality to this problem in Theorem \ref{thm500} where we prove almost sure asymptotic freeness for the whole family of partial transposes $\left\{W^{\sigma}_{d_1\cdots d_n,p}\right\}_{\sigma\in \left\{0,1\right\}^n}$. Then, an important advantage of this shift to the multipartite setting is that we have a limitless number of asymptotically free partial transposes $W^{\sigma}_{d_1\cdots d_n,p}$, so it becomes possible to discuss the following problem.

\begin{question}
If the partial transposes are asymptotically free, then is it possible to establish a natural analogue of the central limit theorem?
\end{question}

To do this, in Section \ref{sec-CLT}, we denote by ${\bf d}=(d_1,d_2,\cdots,d_n)$, regard $p=p({\bf d})$ and $n=n({\bf d})$ as functions of ${\bf d}$, and consider the following averages after centering
\begin{equation}
    s_{\bf d}=\frac{1}{|B_{\bf d}|^{\frac{1}{2}}}\sum_{\sigma\in B_{\bf d}} \left ( W_{{\bf d},p}^{\sigma}-c\cdot \text{Id}_{\bf d} \right )
\end{equation}
for certain subsets $B_{\bf d}\subseteq \left\{0,1\right\}^n$ with $\lim |B_{\bf d}|=\infty$. Then, we prove that $(s_{\bf d})_{\bf d}$ converges in moments to the semicircular element of the mean $0$ and the variance $c$, i.e.
\begin{align}
\lim (\mathbb{E}\otimes \text{tr})(s_{\bf d}^{m})=\int_{[-2c,2c]}\frac{t^m}{2\pi c^2}\sqrt{4c^2-t^2}dt
\end{align}
if ${\displaystyle \lim }~ |B_{\bf d}|^m \left ( \frac{1}{\mu(\bf d)}+\left | \frac{p}{d_1d_2\cdots d_n}-c\right | \right )=0$ for all natural numbers $m$ (Theorem \ref{pCLT}), where $\displaystyle \mu({\bf d})=\min_{1\leq j\leq n}d_j$.

\section{Asymptotic freeness of partial transposes}

Let us begin with generalizing some notations and terminologies in \cite{MP19} to the multipartite setting. Let $p$ and $d_1, d_2, \cdots, d_{n}$ be natural numbers with $n\geq 2$, and let 
\begin{equation}
    G=G_{d_1d_2,\cdots d_{n},p}
\end{equation}
be a $d_1 d_2\cdots d_{n}\times p$ random matrix whose entries are independent complex Gaussian random variables with mean $0$ and variance $1$. We denote by $[d]=\left\{1,2,\cdots,d\right\}$ for any natural number $d$, and $[d_1d_2\cdots d_n]=[d_1]\times [d_2]\times \cdots \times [d_n]$ for simplicity. Using a canonical linear isomorphism
\begin{equation}
    M_{d_1d_2\cdots d_{n},p}(\Comp)\cong \Comp^{d_1d_2\cdots d_{n-1}}\otimes M_{d_{n},p}(\Comp),
\end{equation}
let us write $G=\displaystyle \sum_{{\bf i}\in [d_1d_2\cdots d_{n-1}]}e_{\bf i}\otimes G_{\bf i}$ and $\displaystyle G_{\bf i}=\sum_{x=1}^{d_{n}}\sum_{y=1}^p g^{(\bf i)}_{x,y}e_{x,y}\in M_{d_{n},p}(\Comp)$. Then the Wishart matrix $W=\frac{1}{d_1 \cdots d_{n}}GG^*\in M_{d_1d_2\cdots d_{n}}(\Comp)$ is given by
\begin{align}
    &\frac{1}{d_1 \cdots d_{n}}\sum_{\mathbf{i}, \mathbf{j}\in [d_1d_2\cdots d_{n-1}]}e_{\mathbf{i}_1, \mathbf{j}_1} \otimes e_{\mathbf{i}_2, \mathbf{j}_2} \otimes \cdots \otimes e_{\mathbf{i}_{n-1}, \mathbf{j}_{n-1}} \otimes G_\mathbf{i}G_\mathbf{j}^{*}.
\end{align}

Note that we should consider $2^n$-types of partial transpositions of $W$ in the $n$-partite situation.  For any $\sigma=(\sigma_1, \sigma_2, \cdots, \sigma_{n}) \in \{ 0, 1 \}^{n}$, we define the associated partial transposition
\begin{equation}
W^{\sigma}= \left(T_1^{\sigma_1}\otimes T_2^{\sigma_2}\otimes \cdots \otimes T_{n}^{\sigma_{n}}\right)(W)\in M_{d_1d_2\cdots d_{n}}(\Comp)
\end{equation}
where $T_i$ is the transpose operator on each $M_{d_{i}}(\Comp)$.

To compute (non-commutative) joint moments of the partial transposes, let us consider a $\z_2$-valued $m\times n$ matrix $\epsilon=(\epsilon_{ij})_{i\in [m], j \in [n]}$. Then there exist $m$ rows sequences $\epsilon_i=(\epsilon_{ij})_{j=1}^{n}\in \left\{0,1\right\}^{n}$ and their associated partial transposes are given by
\begin{equation}
    W^{\epsilon_1}, W^{\epsilon_2}, \cdots, W^{\epsilon_m}.
\end{equation} 

\subsection{Joint moments of partial transposes}

 In this section, we discuss the $k$-th moments $\mathbb{E}(X_{\epsilon}^k)$ of the following random variable
\begin{align}
X_{\epsilon}= \text{tr} (W^{\epsilon_1} W^{\epsilon_2} \cdots W^{\epsilon_m})=\frac{1}{d_1d_2\cdots d_{n}}\text{Tr} (W^{\epsilon_1} W^{\epsilon_2} \cdots W^{\epsilon_m}).
\end{align}
Here, $\text{tr}=\frac{1}{d}\text{Tr}$ is the normalized trace on $M_d(\Comp)$ and $\epsilon=(\epsilon_{ij})_{i\in [m],j\in [n]}$ is a $\z_2$-valued $m\times n$ matrix with $\epsilon_i=(\epsilon_{ij})_{j=1}^n\in \left\{0,1\right\}^n$. It is unclear whether $X_{\epsilon}$ is a real-valued random variable for now, but it will be explained later in Appendix A.

Recall that \cite[Theorem 3.7]{MP19} covers the case $(n,k)=(2,1)$, and our focus is about the general cases of $(n,k)$. For any natural numbers $k$ and $m$, let us introduce some elementary permutations on 
\begin{equation} 
[\pm km]=[km]\cup [-km]=\left\{1,2,\cdots,km\right\}\cup \left\{-1,-2,\cdots,-km\right\}
\end{equation}
as follows. Recall that the following permutations 
\begin{align}
    \Delta&=(1,-1)\circ (2,-2)\circ \cdots \circ (m,-m)\\
    \Gamma&=(1,2,\cdots,m)
\end{align}
on $[\pm m]$ were introduced in \cite{MP19} to prove asymptotic freeness of partial transposes in the bipartite situation. We define their natural extensions $\Delta^{(k)}$ and $\Gamma^{(k)}$ on $[\pm km]\cong [k]\times [\pm m]$ as the product maps 
\begin{align}
\Delta^{(k)}=&\text{id}_k\times \Delta,\\
\Gamma^{(k)}=&\text{id}_k\times \Gamma.
\end{align}
Then, it is immediate to see that their cycle decompositions on $[\pm km]$ are given by
\begin{align}
\Delta^{(k)}=&(1,-1)\circ (2,-2)\circ \cdots \circ (km, -km),\\
\Gamma^{(k)}=&(1,\cdots,m)(m+1,\cdots,2m)\cdots((k-1)m+1,\cdots,km).
\end{align}

While the $m$ row sequences $\epsilon_1,\cdots,\epsilon_m$ of $(\epsilon_{ij})_{i\in [m],j\in [n]}$ were used to describe multiple partial transposes $W^{\epsilon_1},W^{\epsilon_2},\cdots,W^{\epsilon_m}$, let us use the $j$-th column $\epsilon_j'=(\epsilon_{ij})_{i\in [m]}\in \left\{0,1\right\}^m$ to define a permutation $\mathcal{E}_j$ on $[\pm m]$ by 

\begin{equation}
    \mathcal{E}_j(x)=\left\{ \begin{array}{ll}
x&\text{if }\epsilon_{|x|j}=0\\
-x&\text{if }\epsilon_{|x|j}=1\end{array} \right . .
\end{equation}
Additionally, $\mathcal{E}_j$ extends to a permutation 
\begin{equation}
\mathcal{E}^{(k)}_j=\text{id}_k\times \mathcal{E}_j:[k]\times [\pm m]\rightarrow [k]\times [\pm m]    
\end{equation} 
given by $\mathcal{E}^{(k)}_j(s,s') = (s,\mathcal{E}_j(s'))$. 

Now, we are ready to provide an explicit formula for the following $k$-th moments
\begin{equation}
   \mathbb{E}\left ( X_{\epsilon}^k \right )=\mathbb{E}\left ( \left [ \text{tr}(W^{\epsilon_1}W^{\epsilon_2}\cdots W^{\epsilon_m}) \right ]^k \right ),
\end{equation}
generalizing \cite[Theorem 3.7]{MP19} with full generality under the following notations.

\begin{notation}
Note that any permutation $\sigma\in S_m$ is associated to a partition $\pi$ of $[m]$ using the cyclic decomposition of $\sigma$. We denote by $\sharp(\sigma)$ the number of blocks of $\pi$, and denote by $\pi \vee \pi'$ the supremum of two partitions $\pi$ and $\pi'$. When we regard $\sigma\in S_m$ as a permutation on $[\pm m]$, the extension is considered the identity function on $[-m]=\left\{-m,\cdots,-1\right\}$.
\end{notation}

Our proof for the following theorem is systematic but requires heavy use of notations, so let us present the proof separately in Appendix A.

\begin{theorem}\label{thm302}
    Let $\epsilon=(\epsilon_{ij})_{i\in [m],j\in [n]}$ be a $\z_2$-valued $m\times n$ matrix with $\epsilon_i=(\epsilon_{ij})_{j=1}^{n}\in \left\{0,1\right\}^{n}$ for all $i\in [m]$, and let $X_{\epsilon}=\text{tr}(W^{\epsilon_1}W^{\epsilon_2}\cdots W^{\epsilon_m})$. Then, for any natural number $k$, we have
    \begin{equation}\label{eq34}
    \mathbb{E}(X_{\epsilon}^k) = \sum_{\sigma \in S_{km}} \left( \frac{p}{d_1 \cdots d_{n}} \right)^{\sharp(\sigma)} \prod_{j=1}^n d_j^{f_{k,j}(\epsilon,\sigma)},  
    \end{equation}
    where the exponent $f_{k,j}(\epsilon,\sigma)$ is given by
    \begin{equation}
        \sharp(\mathcal{E}^{(k)}_{j} \Gamma^{(k)} \Delta^{(k)} (\Gamma^{(k)})^{-1} \mathcal{E}^{(k)}_{j} \vee \sigma \Delta^{(k)} \sigma^{-1}) + \sharp(\sigma) - k(m+1)
    \end{equation}
    for all $\sigma\in S_{km}$ and $j\in [n]$.
\end{theorem}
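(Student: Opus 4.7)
The plan is to expand the $k$-th moment $\mathbb{E}(X_\epsilon^k)$ entrywise, apply the Wick (Isserlis) formula to the resulting Gaussian expectation, and repackage the surviving index sums as cycle statistics of products of permutations on $[\pm km]$. The coordinate-wise independence of the partial-transpose twists $\mathcal{E}_j^{(k)}$ is what produces the product structure $\prod_j d_j^{f_{k,j}(\epsilon,\sigma)}$ in the final formula.

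First I would unfold $W=(d_1\cdots d_n)^{-1}GG^*$ in the tensor factorization of $M_{d_1\cdots d_n}(\Comp)$. For multi-indices $\mathbf{a},\mathbf{b}\in[d_1d_2\cdots d_n]$, one has
\begin{equation}
(W^{\epsilon_i})_{\mathbf{a},\mathbf{b}}=\frac{1}{d_1\cdots d_n}\sum_{y=1}^{p}g_{\mathbf{u}_{\epsilon_i}(\mathbf{a},\mathbf{b}),\,y}\,\overline{g_{\mathbf{u}_{\epsilon_i}(\mathbf{b},\mathbf{a}),\,y}},
\end{equation}
where $\mathbf{u}_{\epsilon_i}(\mathbf{a},\mathbf{b})$ swaps the $j$-th coordinates of its two arguments exactly when $\epsilon_{ij}=1$. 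The normalized trace cyclically identifies consecutive $\mathbf{b}$- and $\mathbf{a}$-indices, and the $k$-th power produces $k$ independent cyclic blocks, which is the combinatorial origin of the extension $\Gamma^{(k)}=\text{id}_k\times \Gamma$. The uniform prefactor $(d_1\cdots d_n)^{-k(m+1)}$ comes from $km$ copies of the $W$-normalization and $k$ copies of the trace normalization.

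Second, applying Wick's theorem to the Gaussian expectation of the $km$ products $g\bar g$ reduces it to a sum over bijections $\sigma\in S_{km}$ pairing the $g$'s with the $\bar g$'s. Each cycle of $\sigma$ gives one free column-index summation over $[p]$, producing the factor $p^{\sharp(\sigma)}$. What remains, for each $j\in[n]$, is a sum over the $j$-th coordinates $(\mathbf{a}_\ell)_j,(\mathbf{b}_\ell)_j\in[d_j]$ for $\ell\in[km]$, subject to the identifications coming from the trace, the partial-transpose twist $\epsilon_{\cdot,j}$, and the Wick pairing $\sigma$. Labelling positive indices $\ell\in[km]$ by the $\mathbf{a}$-side and negative indices by the $\mathbf{b}$-side, the trace-plus-transpose identifications can be read off as the cycles of $\mathcal{E}_j^{(k)}\Gamma^{(k)}\Delta^{(k)}(\Gamma^{(k)})^{-1}\mathcal{E}_j^{(k)}$, while the Wick identifications coincide with the cycles of $\sigma\Delta^{(k)}\sigma^{-1}$. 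The number of free $j$-th coordinate indices is then the block count of the join, giving a factor $d_j^{\sharp(\mathcal{E}_j^{(k)}\Gamma^{(k)}\Delta^{(k)}(\Gamma^{(k)})^{-1}\mathcal{E}_j^{(k)}\vee\sigma\Delta^{(k)}\sigma^{-1})}$. Collecting everything and absorbing $(d_1\cdots d_n)^{\sharp(\sigma)}$ into $p^{\sharp(\sigma)}$ to form the ratio $(p/(d_1\cdots d_n))^{\sharp(\sigma)}$, together with the $(d_1\cdots d_n)^{-k(m+1)}$ prefactor, yields exactly the claimed exponent $f_{k,j}(\epsilon,\sigma)$.

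The main obstacle is the combinatorial verification in the third step: one must carefully track how the row-column exchange induced by each partial transpose $T_j^{\epsilon_{ij}}$ interacts with the cyclic reading of the trace and with the Hermitian matching $g\leftrightarrow\bar g$ encoded by $\Delta^{(k)}$, and then recognize the conjugations $\mathcal{E}_j^{(k)}\Gamma^{(k)}(\cdot)(\Gamma^{(k)})^{-1}\mathcal{E}_j^{(k)}$ and $\sigma(\cdot)\sigma^{-1}$ as the natural encodings of these identifications. Conceptually this is a coordinate-by-coordinate lift of the bipartite calculation in \cite[Theorem 3.7]{MP19}, and the independence of the coordinates is precisely what causes the $d_j$-counts to factor; however, the notational bookkeeping required to make each identification rigorous is substantial, which is why the detailed argument is deferred to Appendix A.
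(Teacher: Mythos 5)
Your proposal is correct and follows essentially the same route as the paper's Appendix A: entrywise expansion of the $W^{\epsilon_i}$, the Wick formula producing the sum over $\sigma\in S_{km}$, and the count of index functions constant on the cycles of $\mathcal{E}^{(k)}_{j}\Gamma^{(k)}\Delta^{(k)}(\Gamma^{(k)})^{-1}\mathcal{E}^{(k)}_{j}$ and of $\sigma\Delta^{(k)}\sigma^{-1}$ simultaneously, giving $d_j^{\sharp(\cdot\vee\cdot)}$ per coordinate and $p^{\sharp(\sigma)}$ for the column indices. The only cosmetic difference is that the paper first records the $k=1$ polynomial expression (Lemma A.2, the sets $A(\epsilon)$, $B(\epsilon)$) and then passes to the $k$-th power via $\mathbf{I},\mathbf{Q},\mathbf{T}$, whereas you work with the $k$-th power directly; the bookkeeping you defer is exactly what that lemma carries out.
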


Recall that we have 
\begin{equation}\label{lem303}
2\cdot \sharp(\pi_1 \vee \pi_2) = \sharp(\pi_1 \circ \pi_2)
\end{equation}
for any pairings $\pi_{1}, \pi_{2} \in \mathcal{P}_2(n)$ by \cite[Lemma 2]{MP13}, and both the permutations $\mathcal{E}^{(k)}_{j} \Gamma^{(k)} \Delta^{(k)} (\Gamma^{(k)})^{-1} \mathcal{E}^{(k)}_{j}$ and $\sigma \Delta^{(k)} \sigma^{-1}$ are indeed pairings. Thus, our main focus from now is to analyze
\begin{align}
    &2\cdot \sharp(\mathcal{E}^{(k)}_{j} \Gamma^{(k)} \Delta^{(k)} (\Gamma^{(k)})^{-1} \mathcal{E}^{(k)}_{j} \vee \sigma \Delta^{(k)} \sigma^{-1})\\
    &=\sharp(\mathcal{E}^{(k)}_{j} \Gamma^{(k)} \Delta^{(k)} (\Gamma^{(k)})^{-1} \mathcal{E}^{(k)}_{j} \sigma \Delta^{(k)} \sigma^{-1})\\
    &=\sharp(\Gamma^{(k)} \Delta^{(k)} (\Gamma^{(k)})^{-1} \Delta^{(k)} \mathcal{E}^{(k)}_{j} \Delta^{(k)} \sigma \Delta^{(k)} \sigma^{-1} \mathcal{E}^{(k)}_{j}).
\end{align}

\subsection{Almost sure asymptotic freeness in the multipartite setting}

To establish the almost sure asymptotic freeness, our main technical question is how to compute the exponents $f_{k,j}(\epsilon,\sigma)$. Let us write $f_j=f_{1,j}$ for simplicity if there is no possibility of confusion. Recall that the case $(n,k)=(2,1)$ was studied in \cite{MP19} for the bipartite situation. To consider the general cases of $(n,k)$, it is necessary to develop a new framework to study the general situation $k\geq 2$. 

Let us consider the following family of sets
\begin{equation}
    \mathcal{A}_k := \{ A_1,A_2,\cdots,A_k \},
\end{equation}
for general $k$, where $A_j= [jm]\setminus [(j-1)m]=\left\{(j-1)m+1,\cdots, jm\right\}$. We also denote by $\la \mathcal{A}_k \ra := \{ \cup_{A \in \mathcal{S}}A : \mathcal{S} \subseteq \mathcal{A}_k \}$. Then the main theorem of this section is stated as follows.

\begin{theorem}\label{thm-main1}
 Let $\sigma \in S_{km}$ and let $\epsilon=(\epsilon_{ij})_{i\in [m],j\in [n]}$ be a $\z_2$-valued $m\times n$ matrix.
    \begin{enumerate}
        \item Assume that $k\geq 2$ and there exist non-empty disjoint subsets $C_1\in \la\mathcal{A}_k\ra$ and $C_2\in \la\mathcal{A}_k\ra$ such that $\sigma(C_1)=C_1$, $\sigma(C_2)=C_2$ and $[km]=C_1\cup C_2$. Let $|C_1|=k_1m$ and $|C_2|=k_2m$ and consider bijective increasing functions $c_1 : [k_1m] \rightarrow C_1$ and $c_2 : [k_2m] \rightarrow C_2$. Then we have
    \begin{align}
        &\sharp\left (\mathcal{E}^{(k)}_{j} \Gamma^{(k)} \Delta^{(k)} (\Gamma^{(k)})^{-1} \mathcal{E}^{(k)}_{j} \vee \sigma \Delta^{(k)} \sigma^{-1}\right )\\
        &=\sum_{i=1,2}\sharp\left (\mathcal{E}^{(k_i)}_{j} \Gamma^{(k_i)} \Delta^{(k_i)} (\Gamma^{(k_i)})^{-1} \mathcal{E}^{(k_i)}_{j} \vee (c_i ^{-1} \sigma c_i) \Delta^{(k_i)} (c_i ^{-1} \sigma c_i)^{-1}\right )
    \end{align}
    for all $j\in [n]$. In particular, we have
    \begin{equation}
        f_{k,j}(\epsilon,\sigma)=f_{k_1,j}(\epsilon,c_1^{-1}\sigma c_1)+f_{k_2,j}(\epsilon,c_2^{-1}\sigma c_2).
    \end{equation}
    \item  Assume that $k\geq 2$ and there are no non-empty disjoint subsets $C_1\in \la \mathcal{A}_k\ra$ and $C_2\in \la\mathcal{A}_k\ra$ such that $\sigma(C_1)=C_1$, $\sigma(C_2)=C_2$ and $[km]=C_1\cup C_2$. Then we have 
   \begin{equation}
   f_{k,j}(\epsilon,\sigma)\leq 2-2k\leq -2
   \end{equation}
   for all $j\in [n]$. 
    \end{enumerate}
\end{theorem}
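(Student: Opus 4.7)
The plan is to treat Part (1) via a block-decomposition argument and Part (2) via the classical genus inequality for permutations.

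For Part (1), set $\alpha_j^{(k)} := \mathcal{E}^{(k)}_{j} \Gamma^{(k)} \Delta^{(k)} (\Gamma^{(k)})^{-1} \mathcal{E}^{(k)}_{j}$ and $\beta^{(k)}_\sigma := \sigma \Delta^{(k)} \sigma^{-1}$. The strategy is to show both pairings preserve the decomposition $[\pm km] = (C_1 \cup (-C_1)) \sqcup (C_2 \cup (-C_2))$. For $\alpha_j^{(k)}$: each of $\Delta^{(k)}$ and $\mathcal{E}^{(k)}_j$ acts only within individual pairs $\{x,-x\}$, while $\Gamma^{(k)}$ cycles within each $A_\ell$ and fixes $-A_\ell$; since $C_i \in \langle \mathcal{A}_k\rangle$ is a union of such $A_\ell$, we obtain $\alpha_j^{(k)}(C_i \cup (-C_i)) = C_i \cup (-C_i)$. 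For $\beta^{(k)}_\sigma$: a direct computation using the identity-on-negatives convention gives the transpositions $\{\sigma(x), -x\}$ for $x \in [km]$, and the hypothesis $\sigma(C_i) = C_i$ places each such pair inside $C_i \cup (-C_i)$. The join partition is therefore additive across the two blocks. To identify each restricted pairing with its $k_i$-analogue, extend $c_i$ antisymmetrically by $c_i(-x) := -c_i(x)$; since $c_i$ maps the $\ell$-th block of $[k_im]$ onto the $\ell$-th smallest $A_\ell$ comprising $C_i$, conjugation by $c_i$ transports $\Gamma^{(k)}|_{\pm C_i}$, $\Delta^{(k)}|_{\pm C_i}$, and $\mathcal{E}^{(k)}_j|_{\pm C_i}$ onto $\Gamma^{(k_i)}$, $\Delta^{(k_i)}$, and $\mathcal{E}^{(k_i)}_j$ respectively. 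Combined with $\sharp(\sigma) = \sharp(c_1^{-1}\sigma c_1) + \sharp(c_2^{-1}\sigma c_2)$ and $k = k_1 + k_2$, this yields both displayed identities.

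For Part (2), I would invoke the classical genus inequality
\[\sharp(\tau_1) + \sharp(\tau_2) + \sharp(\tau_1\tau_2) \leq N + 2\, o(\langle \tau_1, \tau_2\rangle) \qquad (\tau_1,\tau_2\in S_N),\]
where $o(\langle\tau_1,\tau_2\rangle)$ denotes the number of orbits. Applied with $N = 2km$ to $P := \Gamma^{(k)}\Delta^{(k)}(\Gamma^{(k)})^{-1}\Delta^{(k)}$ and $Q := \mathcal{E}^{(k)}_j \Delta^{(k)}\sigma\Delta^{(k)}\sigma^{-1}\mathcal{E}^{(k)}_j$, combined with the reformulation $\sharp(\alpha_j^{(k)} \vee \beta_\sigma^{(k)}) = \sharp(PQ)/2$ already present in the excerpt, this reduces Part (2) to the estimates $\sharp(P) = 2k$, $\sharp(Q) = 2\sharp(\sigma)$, and $o(\langle P,Q\rangle) \leq 2$. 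The first two follow from a direct cycle trace: $P$ acts as $\Gamma^{(k)}$ on $[km]$ and as $(\Gamma^{(k)})^{-1}$ on $[-km]$, giving $2k$ disjoint $m$-cycles; while $\Delta^{(k)}\sigma\Delta^{(k)}\sigma^{-1}$ acts as $\sigma^{-1}$ on $[km]$ and as a sign-twisted copy of $\sigma$ on $[-km]$, contributing $2\sharp(\sigma)$ cycles, a count preserved by conjugation by the involution $\mathcal{E}^{(k)}_j$. Substituting into the genus inequality yields
\[f_{k,j}(\epsilon,\sigma) \leq o(\langle P,Q\rangle) - 2k.\]

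It remains to verify $o(\langle P,Q\rangle) \leq 2$ under the non-decomposability hypothesis. The hypothesis is equivalent to connectedness of the graph on vertex set $[k]$ with edges $\{\ell,\ell'\}$ whenever $\sigma(A_\ell) \cap A_{\ell'}\neq\emptyset$; since the $P$-orbits are exactly the $A_\ell$'s and $-A_\ell$'s, this connectedness forces $\langle P,Q\rangle$ to act transitively on each of $[km]$ and $[-km]$ (a case analysis on $\epsilon$ shows that $Q(i) \in \{\pm \sigma^{\pm 1}(i)\}$, which is enough to transport transitivity from $\langle\Gamma^{(k)},\sigma\rangle$ across). When $\mathcal{E}^{(k)}_j = \id$, the two halves remain separate and $o = 2$; any nontrivial swap in $\mathcal{E}^{(k)}_j$ joins them into a single orbit, yielding $o = 1$. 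In either case $o \leq 2$ and the bound $f_{k,j}(\epsilon,\sigma) \leq 2 - 2k$ follows. The main obstacle I anticipate is carrying out the cycle traces for $P$ and $Q$ with complete care, since the identity-on-negatives extension of $\sigma$ produces asymmetric behavior on positive versus negative indices and demands systematic case analysis.
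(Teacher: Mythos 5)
Your proposal is correct, and both parts follow the paper's strategy in substance. Part (1) is essentially the paper's argument: the product $\mathcal{E}^{(k)}_{j} \Gamma^{(k)} \Delta^{(k)} (\Gamma^{(k)})^{-1} \mathcal{E}^{(k)}_{j} \sigma \Delta^{(k)} \sigma^{-1}$ preserves each $C_i\cup(-C_i)$, the count splits via $2\sharp(\pi_1\vee\pi_2)=\sharp(\pi_1\pi_2)$, and conjugation by the antisymmetric extension of $c_i$ transports the restricted permutations to their $k_i$-analogues; your version (showing each pairing separately preserves the decomposition) is the same computation. Part (2) rests on exactly the same Euler-characteristic/genus inequality with the same cycle counts $\sharp(P)=2k$ and $\sharp(Q)=2\sharp(\sigma)$, and your bound $f_{k,j}\leq o(\langle P,Q\rangle)-2k$ reproduces the paper's $1-2k-g$ (their Case 1, $o=1$) and $2-2k-(g+g')$ (their Case 2, $o=2$). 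The one place you genuinely reorganize the argument is the orbit count: the paper builds an explicit connecting sequence $(x_i)$ and signed blocks $V_{j,i}$ (Lemma \ref{lem208}) and then splits on whether $V_j\sim -V_j$, whereas you get $o\leq 2$ directly from the fact that $Q(x)\in\{\pm\sigma^{\pm 1}(x)\}$ forces every $\langle P,Q\rangle$-orbit to surject onto $[km]$ under $x\mapsto|x|$, the quotient graph being connected by non-decomposability; this is a slightly cleaner packaging of the same mechanism and buys you a single uniform estimate in place of the paper's two cases. One imprecision worth fixing: your dichotomy ``any nontrivial swap in $\mathcal{E}^{(k)}_j$ yields $o=1$'' is false --- when $\mathcal{E}^{(k)}_j$ negates every index, $Q$ still preserves signs and $o=2$, and when $o=2$ the two orbits are $V_j$ and $-V_j$, which in general are not $[km]$ and $[-km]$ --- but since only the inequality $o\leq 2$ enters the estimate, the conclusion is unaffected.
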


A proof of the above Theorem \ref{thm-main1} will be presented in the next subsection \ref{sec-proof}. In this section, let us focus on how this result is applied to prove almost sure asymptotic freeness of the partial transposes $\left\{W^{\sigma}\right\}_{\sigma\in \left\{0,1\right\}^n}$. To proceed, let us recall an important lemma from \cite{MP19}. For a function $x:[m] \rightarrow Y$, let us denote by
\begin{equation}
    \ker(x)=\{x^{-1}(t):t\in Y\} \backslash \{\emptyset\}.
\end{equation}

\begin{lemma}

 Let $\sigma \in S_{m}$ and let $\epsilon=(\epsilon_{ij})_{i\in [m],j\in [n]}$ be a $\z_2$-valued $m\times n$ matrix with $\epsilon_j'=(\epsilon_{ij})_{i\in [m]}\in \left\{0,1\right\}^m$.
 
\begin{enumerate} \label{lem400}
    \item
    Then $f_j(\epsilon,\sigma)=f_{1,j}(\epsilon,\sigma)<0$ holds unless $\epsilon_j'$ is constant on the cycles of $\sigma$.
    \item \label{lem402}
   If $\epsilon_j'$ is constant on the cycles of $\sigma$, then $f_j(\epsilon,\sigma) \le 0$ with equality holds precisely when the associated partition of $\sigma$ is non-crossing.
\end{enumerate}
In particular, if $f_j(\epsilon,\sigma)\equiv 0$ for all $j\in [n]$ and if $\pi$ is the associated partition of $\sigma$, then $\text{ker}(\epsilon)\geq \pi$ holds, i.e. each block of $\pi$ is contained in a block of $\text{ker}(\epsilon)$.
\end{lemma}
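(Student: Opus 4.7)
The starting observation is that $\mathcal{E}_j$, and hence $f_j(\epsilon,\sigma)=f_{1,j}(\epsilon,\sigma)$, depends only on the $j$-th column $\epsilon_j'=(\epsilon_{ij})_{i\in[m]}\in\{0,1\}^m$ and not on any other column of $\epsilon$. Consequently the lemma is really a column-wise statement, and the multipartite structure adds no new combinatorics here: for each fixed $j$, the analysis collapses to the bipartite (single-coordinate) setting already handled in \cite[Theorem 3.7]{MP19}, whose strategy I would adapt directly.

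My plan is to first invoke the pairing identity $2\sharp(\pi_1\vee\pi_2)=\sharp(\pi_1\pi_2)$ recalled in \eqref{lem303} to rewrite
\begin{equation*}
f_j(\epsilon,\sigma) = \tfrac{1}{2}\sharp\bigl(\mathcal{E}_j\Gamma\Delta\Gamma^{-1}\mathcal{E}_j \cdot \sigma\Delta\sigma^{-1}\bigr) + \sharp(\sigma) - (m+1),
\end{equation*}
and then to interpret the cycle count as the face count of the ribbon graph on $[\pm m]$ whose two edge colors are supplied by the pairings $\mathcal{E}_j\Gamma\Delta\Gamma^{-1}\mathcal{E}_j$ and $\sigma\Delta\sigma^{-1}$. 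For part (1), I would assume that $\epsilon_j'$ takes different values on two elements of a common cycle of $\sigma$ and hence, by moving along the cycle, that it takes different values on some consecutive pair $i,\sigma(i)$. Tracing the alternating walk that uses the edge $\{\sigma(i),-i\}$ from $\sigma\Delta\sigma^{-1}$ together with the matching edge from $\mathcal{E}_j\Gamma\Delta\Gamma^{-1}\mathcal{E}_j$, the sign flip that $\mathcal{E}_j$ introduces at exactly one of the two endpoints forces two faces that would otherwise be distinct to merge, yielding the strict drop $f_j(\epsilon,\sigma)<0$. For part (2), when $\epsilon_j'$ is constant on every cycle of $\sigma$, the involution $\mathcal{E}_j$ can be absorbed into $\sigma\Delta\sigma^{-1}$, so the cycle count collapses to the genus computation underlying the standard Wishart moment expansion; the resulting pairing system has non-negative genus, giving $f_j(\epsilon,\sigma)\le 0$ with equality exactly in the planar case, which is well known to coincide with the partition of $\sigma$ being non-crossing.

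The ``in particular'' clause is then immediate: the hypothesis $f_j(\epsilon,\sigma)\equiv 0$ on all $j$ forces each column $\epsilon_j'$ to be constant on every cycle of $\sigma$ by part (1), and simultaneous constancy of every column on every cycle is precisely the assertion that each block of the cycle partition $\pi$ of $\sigma$ lies inside a single block of $\text{ker}(\epsilon)$, i.e.\ $\text{ker}(\epsilon)\ge\pi$. The main obstacle will be the careful ribbon-graph bookkeeping in part (1), where one has to verify that each mismatch in $\epsilon_j'$ actually merges two previously distinct faces rather than simply permuting them; this is the same delicate count already carried out in \cite[Theorem 3.7]{MP19}, and because $f_j$ only sees the $j$-th column of $\epsilon$ I expect the bipartite argument to transfer essentially verbatim with only notational adjustments.
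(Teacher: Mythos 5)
Your proposal is consistent with the paper, which in fact offers no proof of this lemma at all: it is explicitly \emph{recalled} from \cite{MP19}, and your two key observations --- that $f_j=f_{1,j}$ sees only the $j$-th column $\epsilon_j'$, so the $n$-partite statement is just the bipartite one applied coordinatewise, and that the bound itself is the Euler-characteristic/genus count from that source --- are precisely why the citation suffices, with the ``in particular'' clause following formally from parts (1) and (2) exactly as you describe. One small correction: the delicate face-merging count you defer to lives in Section 4 of \cite{MP19} (this paper itself points to \cite[Lemma 4.3]{MP19} when adapting that argument for its Theorem 2.3(2)), not in \cite[Theorem 3.7]{MP19}, which is the moment formula.
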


Then, applying Theorem \ref{thm-main1} with Lemma \ref{lem400}, we reach the following almost sure asymptotic freeness for the general cases of $(n,k)$.

\begin{theorem}\label{thm500}
    Suppose that $\displaystyle \lim d_j =\infty$ for all $j\in [n]$ with the condition $\displaystyle \lim \frac{p}{d_{1} \cdots d_{n}} = c\in (0,\infty)$. Then the family $\{ W^\sigma \}_{\sigma \in \{ 0, 1 \}^{n}}$ of the partial transposes is almost surely asymptotically free.
\end{theorem}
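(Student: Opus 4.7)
The plan is to combine the moment formula of Theorem \ref{thm302}, the combinatorial decomposition of Theorem \ref{thm-main1}, and the sign analysis of Lemma \ref{lem400} to identify the leading-order joint moments, then to upgrade convergence in expectation to almost-sure convergence via a variance estimate combined with Gaussian hypercontractivity and Borel--Cantelli.

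For the expected joint moments, fix a $\z_2$-valued matrix $\epsilon$ with rows $\epsilon_1,\ldots,\epsilon_m$ and apply Theorem \ref{thm302} with $k=1$. Lemma \ref{lem400} gives $d_j^{f_j(\epsilon,\sigma)}\leq 1$ for every $j$, so as each $d_j\to\infty$ only those $\sigma\in S_m$ with $f_j(\epsilon,\sigma)=0$ for \emph{every} $j$ survive in the limit. By both parts of Lemma \ref{lem400}, these are exactly the $\sigma$ whose cycle partition $\pi$ is non-crossing and satisfies $\pi\leq \ker(\epsilon)$. Since $(p/(d_1\cdots d_n))^{\sharp(\sigma)}\to c^{|\pi|}$, the expected joint moment converges to
\begin{equation*}
\sum_{\substack{\pi\in NC(m)\\ \pi\leq \ker(\epsilon)}} c^{|\pi|},
\end{equation*}
which is precisely the joint $m$-th free moment of a free family of free Poisson elements of parameter $c$ indexed by $\{0,1\}^n$. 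Hence $\{W^\sigma\}_\sigma$ is asymptotically free in expectation.

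For the variance, apply Theorem \ref{thm302} with $k=2$ to $\mathbb{E}(X_\epsilon^2)$ and partition $S_{2m}$ according to whether $\sigma$ preserves a nontrivial decomposition $[2m]=C_1\sqcup C_2$ with $C_1,C_2\in \la\mathcal{A}_2\ra$. By Theorem \ref{thm-main1}(1) the preserving $\sigma$ factor into pairs $(\sigma_1,\sigma_2)\in S_m\times S_m$, and a direct calculation shows their total contribution equals exactly $\mathbb{E}(X_\epsilon)^2$. By Theorem \ref{thm-main1}(2) every non-preserving $\sigma$ satisfies $f_{2,j}(\epsilon,\sigma)\leq -2$ uniformly in $j$, so $\prod_j d_j^{f_{2,j}(\epsilon,\sigma)}\leq \mu(\mathbf{d})^{-2}$. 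Since $|S_{2m}|$ depends only on $m$ and $p/(d_1\cdots d_n)$ is bounded, this yields $\text{Var}(X_\epsilon)=O(\mu(\mathbf{d})^{-2})$.

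Finally, since $X_\epsilon-\mathbb{E}X_\epsilon$ is a polynomial of fixed degree in the Gaussian entries of $G$, hypercontractivity within a finite sum of Wiener chaoses makes its $L^{2k}$ norm comparable to its $L^2$ norm, giving $\mathbb{E}((X_\epsilon-\mathbb{E}X_\epsilon)^{2k})=O(\mu(\mathbf{d})^{-2k})$ for every $k$. Choosing $k$ large and combining Chebyshev with Borel--Cantelli then yields almost-sure convergence of $X_\epsilon$ to the deterministic limit identified above, completing the proof of almost-sure asymptotic freeness. The main obstacle is the variance estimate: without the \emph{uniformity} in $j$ of Theorem \ref{thm-main1}(2), the non-factorizing $\sigma$ could contribute $O(1)$ in some coordinate of $\mathbf{d}$ and the $O(\mu^{-2})$ bound would collapse, so the fact that $f_{2,j}\leq -2$ holds for every $j$ is essential.
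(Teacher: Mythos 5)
Your overall architecture is the same as the paper's: first identify the limiting joint moments from Theorem \ref{thm302} with $k=1$ plus Lemma \ref{lem400} and observe that they carry the block-multiplicative, non-crossing structure that forces mixed cumulants to vanish; then bound $\mathrm{Var}(X_\epsilon)$ by splitting $S_{2m}$ into the $\sigma$ with $\sigma([m])=[m]$ (which reproduce $\mathbb{E}(X_\epsilon)^2$ exactly via Theorem \ref{thm-main1}(1)) and the rest (which Theorem \ref{thm-main1}(2) kills), and finish with Borel--Cantelli. Two concrete slips in your first step are worth fixing. First, the sum in Theorem \ref{thm302} runs over permutations $\sigma\in S_m$, not over partitions, so once you have identified the surviving $\sigma$ as those whose associated partition $\pi$ is non-crossing with $\pi\leq\ker(\epsilon)$, the limit is $\sum_{\pi}|S(m,\pi)|\,c^{|\pi|}=\sum_{\pi}\bigl(\prod_i(|V_i|-1)!\bigr)c^{|\pi|}$ rather than $\sum_{\pi}c^{|\pi|}$; you have silently dropped the multiplicity of permutations per partition, which the paper keeps. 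Second, the identification of the individual limits as free Poisson elements of parameter $c$ is not correct: already in the bipartite case the partial transpose $W^{\Gamma}$ converges to a \emph{shifted semicircular} element, not a Mar\v{c}enko--Pastur element, so the limit laws genuinely depend on $\sigma\in\{0,1\}^n$. Neither error is fatal for freeness, because the argument only needs the limit to be of the form $\sum_{\pi\in NC(m)}\prod_i\kappa_{V_i}$ with each block factor vanishing unless $\epsilon$ is constant on that block, and that structure is present either way; but as stated your limiting formula does not follow from your own premises. In the variance step, the uniform bound $f_{2,j}(\epsilon,\sigma)\leq-2$ for \emph{every} $j$ gives $\prod_j d_j^{f_{2,j}}\leq(d_1\cdots d_n)^{-2}$, which is what the paper uses; your $\mu(\mathbf{d})^{-2}$ throws away all but one coordinate, and it is precisely to compensate for this weaker bound that you then need the hypercontractivity upgrade to higher moments. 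With the full $(d_1\cdots d_n)^{-2}$ bound the second-moment Chebyshev plus Borel--Cantelli argument suffices and the Wiener-chaos machinery is unnecessary (though not wrong).
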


\begin{proof}
As the first step, let us prove asymptotic freeness by showing that all the mixed cumulants vanish as in \cite{MP19}. Recall that the joint moment $(\mathbb{E}\otimes \text{tr})(W^{\epsilon_1}W^{\epsilon_2}\cdots W^{\epsilon_m})$ is given by 
    \begin{align}\label{eq205}
    &\sum_{\sigma \in S_{m}} \left( \frac{p}{d_1 \cdots d_{n}} \right)^{\sharp(\sigma)} \prod_{j=1}^n d_j^{f_{j}(\epsilon,\sigma)}
    \end{align} 
    for any $\z_2$-valued $m\times n$ matrix $\epsilon=(\epsilon_{ij})_{i\in [m],j\in [n]}$ by Theorem \ref{thm302}. Furthermore, Lemma \ref{lem400} tells us that
    \begin{equation}
        \lim \left [ \left( \frac{p}{d_1 \cdots d_{n}} \right)^{\sharp(\sigma)} \prod_{j=1}^n d_j^{f_{j}(\epsilon,\sigma)} \right ] = c^{\sharp(\sigma)}\cdot 0=0
    \end{equation}
 if the associated partition of $\sigma \in S_{m}$ is crossing, so it is enough to consider only the cases where the associated partitions are non-crossing in \eqref{eq205}. Let us denote by $NC(m)$ the set of all non-crossing partitions on $[m]$ and by $S(m,\pi)$ the set of all permutations $\sigma\in S_m$ whose associated partition is $\pi$. Then we have
    \begin{align}
        &\lim (\mathbb{E}\otimes \text{tr})(W^{\epsilon_1}W^{\epsilon_2}\cdots W^{\epsilon_m})\\
        \label{cumulant}&=\sum_{\pi \in NC(m)} c^{\sharp(\pi)} \sum_{\sigma\in S(m,\pi)} \lim \prod_{j=1}^n d_j^{f_{j}(\epsilon,\sigma)},
    \end{align}
and Lemma \ref{lem400} implies
\begin{align}
     \lim \prod_{j=1}^n d_j^{f_{j}(\epsilon,\sigma)}=\left\{\begin{array}{ll}
     1&\text{if }\text{ker}(\epsilon) \ge \pi\\
     0&\text{otherwise}
     \end{array} \right .
\end{align}
for all $\sigma\in S(m,\pi)$ where we regard $i\mapsto \epsilon_i$ as a function from $[m]$ into $\left\{0,1\right\}^n$. Let $V_1,V_2,\cdots,V_r$ be the disjoint block decomposition of $\pi\in NC(m)$, and write
\begin{equation}\label{eq206}
    \delta_{T}(W^{\epsilon_1},W^{\epsilon_2},\cdots, W^{\epsilon_m})=\left\{\begin{array}{ll}
    1&\text{if }\epsilon_{t_1}=\epsilon_{t_2}=\cdots= \epsilon_{t_l}\\
    0&\text{otherwise}
    \end{array} \right . .
\end{equation}
for any subset $T=\left\{t_1,t_2,\cdots,t_l\right\}\subseteq [m]$ with $t_1<t_2<\cdots<t_l$. Then \eqref{cumulant} can be written as
\begin{equation}
    \sum_{\pi \in NC(m)} \prod_{i=1}^r  c \cdot (|V_i|-1)! \cdot \delta_{V_i}(W^{\epsilon_1},W^{\epsilon_2},\cdots, W^{\epsilon_m}).
\end{equation}
A crucial step here is to note that 
\begin{align}
   \prod_{i=1}^r  c \cdot (|V_i|-1)! \cdot  \delta_{V_i}(W^{\epsilon_1},W^{\epsilon_2},\cdots, W^{\epsilon_m})
\end{align}
coincides with namely the {\it free cumulant} 
\begin{equation}
\kappa_{\pi}(W^{\epsilon_1},W^{\epsilon_2},\cdots ,W^{\epsilon_m})=\prod_{i=1}^r \kappa_{V_i}(W^{\epsilon_1},W^{\epsilon_2},\cdots ,W^{\epsilon_m}).
\end{equation}
Thus, the above \eqref{eq206} tells us that all mixed cumulants vanish, and this fact allows us to conclude that the given family $\left\{W^{\sigma}\right\}$ is asymptotically free by \cite[Theorem 16]{MS17} or \cite[Theorem 11.20]{NS06}.

Now, our second step is to prove
\begin{equation}
    \text{Var}(X_{\epsilon})=O(d_1^{-2}\cdots d_n^{-2})
\end{equation}
to establish almost sure asymptotic freeness. Note that the following identity
\begin{align}
        \mathbb{E}(X_{\epsilon})^{2} = \underset{\sigma([m])=[m]}{\sum_{\sigma \in S_{2m}}} \left( \frac{p}{d_{1} \cdots d_{n}} \right)^{\sharp(\sigma)} \prod_{j=1}^{n} d_{j}^{f_{2,j}(\epsilon,\sigma)}
    \end{align}
is a direct consequence from Theorem \ref{thm-main1} (1). Indeed, for any $\sigma \in S_{2m}$ such that $\sigma([m])=[m]$ and $j\in[n]$, we have 
    \begin{align}\label{eqn501}
        \begin{split}
            &\sharp(\mathcal{E}^{(2)}_{j} \Gamma^{(2)} \Delta^{(2)} (\Gamma^{(2)})^{-1} \mathcal{E}^{(2)}_{j} \vee \sigma \Delta^{(2)} \sigma^{-1})\\
            &=\sum_{i=1,2}\sharp(\mathcal{E}_{j} \Gamma \Delta \Gamma^{-1} \mathcal{E}_{j} \vee (c_i ^{-1} \sigma c_i) \Delta (c_i ^{-1} \sigma c_i)^{-1})
        \end{split}
    \end{align}
and $f_{2,j}(\epsilon,\sigma)=f_{1,j}(\epsilon,c_1^{-1}\sigma c_1)+f_{1,j}(\epsilon,c_2^{-1}\sigma c_2)$ by Theorem \ref{thm-main1} (1). Here, $c_{1}:[m]\rightarrow[m]$ is the identity map and $c_{2}:[m]\rightarrow[2m]\backslash[m]$ is given by $c_{2}(i)=m+i$. Furthermore, since $\{ \sigma \in S_{2m}:\sigma([m])=[m] \}$ is naturally identified with $S_{m} \times S_{m}$ via $\sigma \mapsto (c_{1}^{-1} \sigma c_{1}, c_{2}^{-1} \sigma c_{2})$, we can see that
    \begin{align}
        &\underset{\sigma([m])=[m]}{\sum_{\sigma \in S_{2m}}} \left( \frac{p}{d_{1} \cdots d_{n}} \right)^{\sharp(\sigma)} \prod_{j=1}^{n} d_{j}^{f_{2,j}(\epsilon,\sigma)}\\
        &=\underset{\sigma([m])=[m]}{\sum_{\sigma \in S_{2m}}} \left( \frac{p}{d_{1} \cdots d_{n}} \right)^{\sharp(c_{1}^{-1} \sigma c_{1})+\sharp(c_{2}^{-1} \sigma c_{2})} \prod_{j=1}^{n} d_{j}^{f_{1,j}(\epsilon,c_{1}^{-1} \sigma c_{1}) + f_{1,j}(\epsilon,c_{2}^{-1} \sigma c_{2})}\\
        &=\sum_{\tau_{1},\tau_{2} \in S_{m}} \left( \frac{p}{d_{1} \cdots d_{n}} \right)^{\sharp(\tau_{1})+\sharp(\tau_{2})} \prod_{j_1=1}^{n} d_{j_1}^{f_{1,j_1}(\epsilon,\tau_{1})} \prod_{j_2=1}^{n} d_{j_2}^{f_{1,j_2}(\epsilon,\tau_{2})} =\mathbb{E}(X_{\epsilon})^{2}.
    \end{align}

Then Theorem \ref{thm302} and Theorem \ref{thm-main1} (2) tell us that
    \begin{align}
 \text{Var}(X_{\epsilon})&=\mathbb{E}(X_{\epsilon}^{2})-\mathbb{E}(X_{\epsilon})^{2}\\
=&\underset{\sigma([m])\ne[m]}{\sum_{\sigma \in S_{2m}}} \left( \frac{p}{d_{1} \cdots d_{n}} \right)^{\sharp(\sigma)} \prod_{j=1}^{n} d_{j}^{f_{2,j}(\epsilon,\sigma)}
    \end{align}
with $f_{2,j}(\epsilon,\sigma)\leq -2$ for all $j\in [n]$. Finally, since $(\frac{p}{d_1d_2\cdots d_n})_{n\in \n}$ has a uniform upper bound $M>1$ from the assumption, we can conclude that
    \begin{align}
        \text{Var}(X_{\epsilon})\leq (2m)!M^{2m}(d_1d_2\cdots d_n)^{-2}.
    \end{align}

\end{proof}


\subsection{Proof of Theorem \ref{thm-main1}}\label{sec-proof}

Let $\sigma\in S_{km}$ and let $\epsilon=(\epsilon_{ij})_{i\in [m],j\in [n]}$ be a $\z_2$-valued $m\times n$ matrix. Let us begin with a proof of the first part of Theorem \ref{thm-main1}.

\begin{theorem*}
    Let $\sigma \in S_{km}$ with $k\geq 2$ and suppose that there exist non-empty disjoint subsets $C_1\in \la\mathcal{A}_k\ra$ and $C_2\in \la\mathcal{A}_k\ra$ such that $\sigma(C_1)=C_1$, $\sigma(C_2)=C_2$ and $[km]=C_1\cup C_2$. Let $|C_1|=k_1m$ and $|C_2|=k_2m$ and consider bijective increasing functions $c_1 : [k_1m] \rightarrow C_1$ and $c_2 : [k_2m] \rightarrow C_2$. Then we have
    \begin{align}
        &\sharp\left (\mathcal{E}^{(k)}_{j} \Gamma^{(k)} \Delta^{(k)} (\Gamma^{(k)})^{-1} \mathcal{E}^{(k)}_{j} \vee \sigma \Delta^{(k)} \sigma^{-1}\right )\\
        &=\sum_{i=1,2}\sharp\left (\mathcal{E}^{(k_i)}_{j} \Gamma^{(k_i)} \Delta^{(k_i)} (\Gamma^{(k_i)})^{-1} \mathcal{E}^{(k_i)}_{j} \vee (c_i ^{-1} \sigma c_i) \Delta^{(k_i)} (c_i ^{-1} \sigma c_i)^{-1}\right )
    \end{align}
    for all $j\in [n]$. In particular, we have
    \begin{equation}
        f_{k,j}(\epsilon,\sigma)=f_{k_1,j}(\epsilon,c_1^{-1}\sigma c_1)+f_{k_2,j}(\epsilon,c_2^{-1}\sigma c_2)
    \end{equation}
    for all $j\in [n]$.
\end{theorem*}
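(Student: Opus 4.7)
My plan is to exploit the fact that, under the hypothesis, the partition $[\pm km] = (\pm C_1) \sqcup (\pm C_2)$ is invariant under both pairings appearing in the join, namely
\[
\pi_L := \mathcal{E}^{(k)}_{j} \Gamma^{(k)} \Delta^{(k)} (\Gamma^{(k)})^{-1} \mathcal{E}^{(k)}_{j} \quad \text{and} \quad \pi_R := \sigma \Delta^{(k)} \sigma^{-1}.
\]
Once this is established, the join of the two pairings splits as a disjoint union of joins on $\pm C_1$ and $\pm C_2$, so its number of blocks becomes additive, and the remaining task reduces to transporting each piece back to $[\pm k_i m]$ along the sign-preserving extension $c_i : [\pm k_i m] \to \pm C_i$ given by $c_i(-y) := -c_i(y)$.

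For the invariance of $\pi_L$, I would argue factor by factor: each of $\Gamma^{(k)}$, $\Delta^{(k)}$ and $\mathcal{E}^{(k)}_j$ preserves every $\pm A_l$ individually (since $\Gamma^{(k)}$ cyclically permutes $A_l$ and fixes the negatives, $\Delta^{(k)}$ swaps $\pm A_l$ pointwise, and $\mathcal{E}^{(k)}_j = \mathrm{id}_k \times \mathcal{E}_j$ depends only on the $[\pm m]$-coordinate), so their composition preserves every $\pm A_l$ and hence every $\pm C_i$. For $\pi_R$, using that $\sigma$ is extended by the identity on negatives, a short unfolding shows that its pairs are $\{x, -\sigma^{-1}(x)\}$ for $x \in [km]$, and the hypothesis $\sigma(C_i) = C_i$ then confines each such pair to one of $\pm C_1$ or $\pm C_2$.

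With both pairings respecting the bipartition, the join factorises as $\sharp(\pi_L \vee \pi_R) = \sum_{i=1,2} \sharp(\pi_L|_{\pm C_i} \vee \pi_R|_{\pm C_i})$. Because $c_i$ is the unique monotone bijection from $[k_i m]$ onto a union of size-$m$ blocks, it sends the $s$-th block of $[k_i m]$ onto the $s$-th $A_l$ contained in $C_i$ in an order-preserving and residue-mod-$m$-preserving way. This yields the conjugation identities
\[
c_i^{-1} \Gamma^{(k)}|_{C_i} c_i = \Gamma^{(k_i)}, \quad c_i^{-1} \Delta^{(k)}|_{\pm C_i} c_i = \Delta^{(k_i)}, \quad c_i^{-1} \mathcal{E}^{(k)}_j|_{\pm C_i} c_i = \mathcal{E}^{(k_i)}_j,
\]
from which $\pi_L|_{\pm C_i}$ and $\pi_R|_{\pm C_i}$ translate precisely into the two factors on the right-hand side of the claimed identity. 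The $f_{k,j}$-identity then follows by plugging into the definition and using the easy additivities $\sharp(\sigma) = \sharp(c_1^{-1}\sigma c_1) + \sharp(c_2^{-1}\sigma c_2)$ (a consequence of $\sigma(C_i) = C_i$) and $k(m+1) = k_1(m+1) + k_2(m+1)$.

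The main technical obstacle I anticipate is the third conjugation identity above. The sign-flipping rule for $\mathcal{E}^{(k)}_j$ is dictated by a residue mod $m$ on $[\pm km]$, while $\mathcal{E}^{(k_i)}_j$ uses the same residue on $[\pm k_i m]$; one must verify that the single $m \times n$ matrix $\epsilon$ acts identically on $y \in [\pm k_i m]$ and on $c_i(y) \in \pm C_i$. This ultimately reduces to the block-wise monotonicity of $c_i$, but setting up the coordinates cleanly so that this is visible is the only nontrivial point in an otherwise direct argument.
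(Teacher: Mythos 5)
Your proof is correct and follows essentially the same route as the paper's: both arguments rest on the observation that the relevant permutations preserve $C_i\cup(-C_i)$, so the block count splits into the two pieces and each piece transports to $[\pm k_i m]$ along $c_i$, with the final $f_{k,j}$-identity coming from $\sharp(\sigma)=\sharp(c_1^{-1}\sigma c_1)+\sharp(c_2^{-1}\sigma c_2)$ and $k(m+1)=k_1(m+1)+k_2(m+1)$. The only cosmetic difference is that you split the join directly by checking that each of the two pairings separately respects the bipartition, whereas the paper first converts the join into the product of the two pairings via $2\cdot\sharp(\pi_1\vee\pi_2)=\sharp(\pi_1\pi_2)$ and splits that product; your explicit conjugation identities for $\Gamma^{(k)}$, $\Delta^{(k)}$, $\mathcal{E}^{(k)}_j$ restricted to $\pm C_i$ simply spell out what the paper leaves implicit in its last displayed equality.
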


\begin{proof}
    Note that we have
    \begin{align}
        &2\cdot \sharp(\mathcal{E}^{(k)}_{j} \Gamma^{(k)} \Delta^{(k)} (\Gamma^{(k)})^{-1} \mathcal{E}^{(k)}_{j} \vee \sigma \Delta^{(k)} \sigma^{-1})\\
        &=\sharp(\mathcal{E}^{(k)}_{j} \Gamma^{(k)} \Delta^{(k)} (\Gamma^{(k)})^{-1} \mathcal{E}^{(k)}_{j} \sigma \Delta^{(k)} \sigma^{-1})
    \end{align}
    for all $j\in [n]$ thanks to \eqref{lem303}, and the given condition $\sigma(C_1)=C_1$ and $\sigma(C_2)=C_2$ implies
    \begin{align}
        \left (\mathcal{E}^{(k)}_{j} \Gamma^{(k)} \Delta^{(k)} (\Gamma^{(k)})^{-1} \mathcal{E}^{(k)}_{j} \sigma \Delta^{(k)} \sigma^{-1}\right ) \left (C_i \cup (-C_i)\right )=C_i \cup (-C_i)
    \end{align}
    for both cases $i=1$ and $i=2$. Thus, we reach the following conclusion
    \begin{align}
    & 2\cdot \sharp(\mathcal{E}^{(k)}_{j} \Gamma^{(k)} \Delta^{(k)} (\Gamma^{(k)})^{-1} \mathcal{E}^{(k)}_{j} \vee \sigma \Delta^{(k)} \sigma^{-1})\\
        &=\sharp(\mathcal{E}^{(k)}_{j} \Gamma^{(k)} \Delta^{(k)} (\Gamma^{(k)})^{-1} \mathcal{E}^{(k)}_{j} \sigma \Delta^{(k)} \sigma^{-1})\\
        &=\sum_{i=1,2}\sharp\left (\mathcal{E}^{(k)}_{j} \Gamma^{(k)} \Delta^{(k)} (\Gamma^{(k)})^{-1} \mathcal{E}^{(k)}_{j} \sigma \Delta^{(k)} \sigma^{-1}|_{C_i \cup (-C_i)}\right )\\
        &=\sum_{i=1,2}\sharp(\mathcal{E}^{(k_i)}_{j} \Gamma^{(k_i)} \Delta^{(k_i)} (\Gamma^{(k_i)})^{-1} \mathcal{E}^{(k_i)}_{j} (c_i ^{-1} \sigma c_i) \Delta^{(k_i)} (c_i ^{-1} \sigma c_i)^{-1}).
    \end{align}
    
    Additionally, the last conclusion is immediate since $\sharp(\sigma)=\sharp(c_1^{-1}\sigma c_1)+\sharp(c_2^{-1}\sigma c_2)$ and $k(m+1)=k_1(m+1)+k_2(m+1)$.
\end{proof}

From now on, let us suppose that $k\geq 2$ and there do not exist non-empty disjoint $C_1\in \la \mathcal{A}_k\ra$ and $C_2\in \la\mathcal{A}_k\ra$ such that $\sigma(C_1)=C_1$, $\sigma(C_2)=C_2$ and $[km]=C_1\cup C_2$. In this case, we can construct a sequence of elements $(x_i)_{i\in [k-1]}$ and a bijective function $\tau:[k]\rightarrow [k]$ such that
\begin{itemize}
\item  $x_i\in A_{\tau(1)}\cup \cdots \cup A_{\tau(i)}$ for all $i \in [k-1]$,
\item $\sigma(x_{i})\in A_{\tau(i+1)}
$ for all $i \in [k-1]$.
\end{itemize}

For two disjoint subsets $S$ and $T$ of $[\pm km]$, let us write $S\sim_{\phi}T$ if there exists an element $x\in S$ such that $\phi(x)\in T$ or an element $y\in T$ such that $\phi(y)\in S$ by a bijective function $\phi$ on $[\pm km]$. Note that $\sim_{\phi}$ is a symmetric relation. 

\begin{lemma}\label{lem208}
    From the above notations, there exist subsets $V_{j,1},V_{j,2},\cdots,V_{j,k}$ of $[\pm km]$ such that
\begin{itemize}
    \item $V_{j,i}$ is one of $A_{\tau(i)}$ and $-A_{\tau(i)}$ for all $i\in [k]$,
    \item $\displaystyle \bigcup_{t\in [i]} V_{j,t}\sim V_{j,i+1}$ for all $i\in [k-1]$ by $\mathcal{E}^{(k)}_{j} \Delta^{(k)} \sigma \Delta^{(k)} \sigma^{-1} \mathcal{E}^{(k)}_{j}$.
\end{itemize}    
\end{lemma}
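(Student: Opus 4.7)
The plan is to build the sets $V_{j,1}, \ldots, V_{j,k}$ one at a time by induction on $i$, with the sign of each new block chosen on the basis of information coming from $x_i$. Write $\phi$ for the permutation $\mathcal{E}^{(k)}_j \Delta^{(k)} \sigma \Delta^{(k)} \sigma^{-1} \mathcal{E}^{(k)}_j$. The base case is free: I take $V_{j,1}$ to be $A_{\tau(1)}$ (either sign works). For the inductive step, assume $V_{j,1}, \ldots, V_{j,i}$ are defined, and let $t_0 \in [i]$ be the unique index with $x_i \in A_{\tau(t_0)}$.

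The engine of the argument is a pair of explicit identities for $\phi$, obtained by chasing an element through the six factors while using that $\sigma$ (extended to $[\pm km]$) is the identity on $[-km]$, that $\Delta^{(k)}$ is negation, and that $\mathcal{E}^{(k)}_j$ is a sign-changing involution commuting with $\Delta^{(k)}$. The output is
\begin{equation}
\phi\bigl(\mathcal{E}^{(k)}_j(-x_i)\bigr) = \mathcal{E}^{(k)}_j(-\sigma(x_i)), \qquad \phi\bigl(\mathcal{E}^{(k)}_j(\sigma(x_i))\bigr) = \mathcal{E}^{(k)}_j(x_i).
\end{equation}
The crucial observation is that $\mathcal{E}^{(k)}_j(x_i)$ and $\mathcal{E}^{(k)}_j(-x_i) = -\mathcal{E}^{(k)}_j(x_i)$ are negatives of each other and both sit inside $\pm A_{\tau(t_0)}$; in particular, exactly one of them belongs to the already-fixed set $V_{j,t_0}$.

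I then split into two cases. If $\mathcal{E}^{(k)}_j(-x_i) \in V_{j,t_0}$, I declare $V_{j,i+1}$ to be the sign-choice of $\pm A_{\tau(i+1)}$ containing $\mathcal{E}^{(k)}_j(-\sigma(x_i))$; the first identity then exhibits an element of $\bigcup_{t \in [i]} V_{j,t}$ whose $\phi$-image lies in $V_{j,i+1}$, giving $\bigcup_{t\in[i]} V_{j,t}\sim V_{j,i+1}$ via the forward clause. If instead $\mathcal{E}^{(k)}_j(x_i) \in V_{j,t_0}$, I declare $V_{j,i+1}$ to be the sign-choice containing $\mathcal{E}^{(k)}_j(\sigma(x_i))$, and the second identity exhibits an element of $V_{j,i+1}$ whose $\phi$-image lies in $V_{j,t_0} \subseteq \bigcup_{t\in[i]} V_{j,t}$, now using the other clause of the definition of $\sim_\phi$. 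Disjointness of the $V_{j,t}$'s is automatic because $\tau$ is a bijection and $V_{j,t} \subseteq \pm A_{\tau(t)}$.

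The subtle point — and the reason the lemma holds at all — is the genuine symmetry of $\sim_\phi$: allowing the linking element to come from either side is exactly what lets us cope when the sign fixed for $V_{j,t_0}$ at an earlier step is ``wrong'' for the naive forward link. I expect the main labor of the formal write-up to be the six-step verification of the two $\phi$-identities above, which is routine but notationally heavy; everything else is sign-bookkeeping on already-disjoint blocks.
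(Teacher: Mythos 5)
Your proof is correct and follows essentially the same route as the paper: your two closed-form identities $\phi(\mathcal{E}^{(k)}_j(-x_i)) = \mathcal{E}^{(k)}_j(-\sigma(x_i))$ and $\phi(\mathcal{E}^{(k)}_j(\sigma(x_i))) = \mathcal{E}^{(k)}_j(x_i)$ are exactly the content of the paper's four-case table (one case per sign pattern of $\mathcal{E}^{(k)}_j$ on $x_i$ and $\sigma(x_i)$), and the inductive sign-choice for $V_{j,i+1}$, including the use of both clauses of the symmetric relation $\sim_\phi$, matches the paper's construction. If anything, your write-up is slightly cleaner in explicitly tracking the index $t_0\le i$ with $x_i\in A_{\tau(t_0)}$, which the paper's phrasing glosses over.
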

\begin{proof}
Let us use the above sequence $x_1,x_2,\cdots,x_{k-1}$ to construct the subsets $V_{j,1},V_{j,2},\cdots,V_{j,k}$. Let us start with $V_{j,1}=A_{\tau(1)}$. Then, the following table of direct calculations tells us how to decide $V_{j,i+1}$ from $V_{j,1}, \cdots, V_{j,i}$.

\begin{table}[h!]
  \begin{center}
    \label{tab:table1}
    \begin{tabular}{|l|} 
\hline

(1) $\mathcal{E}^{(k)}_{j} : \begin{array}{ll}x_i \mapsto x_i \\
\sigma(x_i)\mapsto \sigma(x_i) \end{array}$ $\Rightarrow$ $\mathcal{E}^{(k)}_{j} \Delta^{(k)} \sigma \Delta^{(k)} \sigma^{-1} \mathcal{E}^{(k)}_{j}: \begin{array}{ll}\sigma(x_i) \mapsto x_i \\
-x_i\mapsto -\sigma(x_i) \end{array}$ \\

        \hline

(2) $\mathcal{E}^{(k)}_{j} : \begin{array}{ll}x_i \mapsto x_i \\
\sigma(x_i)\mapsto -\sigma(x_i) \end{array}$ $\Rightarrow$ $\mathcal{E}^{(k)}_{j} \Delta^{(k)} \sigma \Delta^{(k)} \sigma^{-1} \mathcal{E}^{(k)}_{j}: \begin{array}{ll}-\sigma(x_i) \mapsto x_i \\
-x_i\mapsto \sigma(x_i) \end{array}$ \\

        \hline

(3) $\mathcal{E}^{(k)}_{j} : \begin{array}{ll}x_i \mapsto -x_i \\
\sigma(x_i)\mapsto \sigma(x_i) \end{array}$ $\Rightarrow$ $\mathcal{E}^{(k)}_{j} \Delta^{(k)} \sigma \Delta^{(k)} \sigma^{-1} \mathcal{E}^{(k)}_{j}: \begin{array}{ll} x_i \mapsto -\sigma(x_i) \\
\sigma(x_i)\mapsto -x_i \end{array}$ \\

        \hline

(4) $\mathcal{E}^{(k)}_{j} : \begin{array}{ll}x_i \mapsto -x_i \\
\sigma(x_i)\mapsto -\sigma(x_i) \end{array}$ $\Rightarrow$ $\mathcal{E}^{(k)}_{j} \Delta^{(k)} \sigma \Delta^{(k)} \sigma^{-1} \mathcal{E}^{(k)}_{j}: \begin{array}{ll} x_i \mapsto \sigma(x_i) \\
-\sigma(x_i)\mapsto -x_i \end{array}$ \\

        \hline
        
    \end{tabular}
  \end{center}
\end{table}

Indeed, if $x_{i}\in A_{\tau(i)}$ and $V_{j,i}=A_{\tau(i)}$ (resp. $V_{j,i}=-A_{\tau(i)}$), then we take $V_{j,i+1}=A_{\tau(i+1)}$ (resp. $V_{j,i+1}=-A_{\tau(i+1)}$) in the first or the fourth cases and take $V_{j,i+1}=-A_{\tau(i+1)}$ (resp. $V_{j,i+1}=A_{\tau(i+1)}$) in the second or the third cases.
\end{proof}

Under the notations above, let us denote by $W_{j,i}=-V_{j,i}$, $V_j=\bigcup_{i\in [k]} V_{j,i}$ and $W_j=\bigcup_{i\in [k]} W_{j,i}$. Then we are ready to prove the second part of Theorem 2.3. Our strategy is to adapt the proof of \cite[Lemma 4.3]{MP19} and to divide the general situation into the case where $V_j\sim W_j$ and the other case where $V_j\nsim W_j$ by $\mathcal{E}^{(k)}_{j} \Delta^{(k)} \sigma \Delta^{(k)} \sigma^{-1} \mathcal{E}^{(k)}_{j}$.

\begin{theorem**}
   Suppose that $k \ge 2$ and there do not exist non-empty disjoint $C_1\in \la \mathcal{A}_k\ra$ and $C_2\in \la\mathcal{A}_k\ra$ such that $\sigma(C_1)=C_1$, $\sigma(C_2)=C_2$ and $[km]=C_1\cup C_2$. Then we have 
   \begin{equation}
   f_{k,j}(\epsilon,\sigma)\leq 2-2k  \leq -2 
   \end{equation}
   for any $\z_2$-valued $m\times n$ matrices $\epsilon=(\epsilon_{ij})_{i\in [m],j\in [n]}$.
\end{theorem**}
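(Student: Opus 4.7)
The approach is to adapt the cycle-counting argument of \cite[Lemma 4.3]{MP19} to the $k$-layer setting, with Lemma \ref{lem208} supplying the essential structural input.  Setting $P = \mathcal{E}^{(k)}_{j} \Gamma^{(k)} \Delta^{(k)} (\Gamma^{(k)})^{-1} \mathcal{E}^{(k)}_{j}$ and $Q = \sigma \Delta^{(k)} \sigma^{-1}$, both are pairings on $[\pm km]$, so the identity \eqref{lem303} reduces the task to bounding $\sharp(\phi)$ from above, where $\phi = \mathcal{E}^{(k)}_{j} \Delta^{(k)} \sigma \Delta^{(k)} \sigma^{-1} \mathcal{E}^{(k)}_{j}$ is cyclically conjugate to $P \circ Q$, and then combining with the $\sharp(\sigma) - k(m+1)$ correction in the formula for $f_{k,j}$.

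First I would invoke Lemma \ref{lem208} to obtain the chain $V_{j,1}, \ldots, V_{j,k}$ with $V_{j,i} \in \{A_{\tau(i)}, -A_{\tau(i)}\}$ and consecutive linkings $\bigcup_{t \le i} V_{j,t} \sim_{\phi} V_{j,i+1}$, and then set $V_j = \bigcup_{i} V_{j,i}$ and $W_j = -V_j$.  The decomposable baseline of Theorem \ref{thm-main1} (1) applied block-by-block, combined with the bipartite estimate $f_{1,j} \le 0$ from Lemma \ref{lem400}, furnishes the reference bound $\sharp(P\vee Q) + \sharp(\sigma) \le k(m+1)$ in the hypothetical case where $\sigma$ preserves each $A_i$.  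Each cross-block linking from Lemma \ref{lem208} then witnesses a pair of previously-distinct $\phi$-orbits merging against this baseline.

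Splitting into the two subcases indicated in the excerpt: in the case $V_j \not\sim_{\phi} W_j$, the pairing-composition $\phi$ preserves $V_j$ and $W_j$ separately, and the $k-1$ chain linkings inside $V_j$ are mirrored by $k-1$ corresponding mergings inside $W_j$ via $W_{j,i} = -V_{j,i}$, yielding a combined deficit of at least $2(k-1)$ in $\sharp(\phi)$.  In the case $V_j \sim_{\phi} W_j$, the cross-link between $V_j$ and $W_j$ contributes one further merging, and together with the $V_j$-chain and its mirrored $W_j$-counterpart again produces a total deficit of at least $2(k-1)$ in $\sharp(\phi)$.  Since $2\sharp(P \vee Q) = \sharp(\phi)$, this translates into a deficit of at least $k-1$ in $\sharp(P \vee Q)$; combined with the parallel deficit of at least $k-1$ in $\sharp(\sigma)$ forced by cycles of $\sigma$ spanning distinct $A_{\tau(i)}$'s in the non-decomposability hypothesis, this yields
\[
f_{k,j}(\epsilon, \sigma) \le -2(k-1) = 2-2k \le -2.
\]

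The main obstacle I anticipate is verifying that the claimed deficits are genuinely independent and additive.  Specifically, one must confirm that each of the $k-1$ chain linkings from Lemma \ref{lem208}, their mirrored images in $W_j$, and (in the case $V_j \sim_{\phi} W_j$) the cross-link, all merge $\phi$-orbits that were still distinct at that stage; otherwise the penalty count is inflated.  This will require a careful orbit-tracking argument that re-traverses the four-case table in the proof of Lemma \ref{lem208} while simultaneously invoking the within-block bipartite estimates of \cite[Lemma 4.3]{MP19} localized to each $A_i \cup (-A_i)$.  Separating the $\sharp(\phi)$-deficit from the $\sharp(\sigma)$-deficit rigorously is the most delicate bookkeeping, since the non-decomposability of $\sigma$ ultimately contributes to both.
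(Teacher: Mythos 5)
You have correctly identified the structural skeleton of the paper's argument: Lemma \ref{lem208}, the sets $V_j=\bigcup_i V_{j,i}$ and $W_j=-V_j$, and the dichotomy $V_j\sim W_j$ versus $V_j\nsim W_j$. But the quantitative core of your proposal does not work. You want to bound $f_{k,j}(\epsilon,\sigma)=\sharp(P\vee Q)+\sharp(\sigma)-k(m+1)$ by exhibiting two \emph{independent} deficits --- one of $k-1$ in $\sharp(P\vee Q)$ against the baseline value $k$, and one of $k-1$ in $\sharp(\sigma)$ against $km$ --- and adding them. The second deficit is fine, but the first is false: the non-decomposability hypothesis does not force $\sharp(P\vee Q)\leq 1$. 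Take $\epsilon\equiv 0$ and $\sigma=(1,2,\dots,km)$ a full cycle (certainly non-decomposable); a direct computation gives $\sharp(P\vee Q)=km-k+1$ while $\sharp(\sigma)=1$, so $f_{k,j}=2-2k$ exactly, but the two ``deficits'' have been traded against each other rather than accumulated separately. In general, decreasing $\sharp(\sigma)$ lets $\sharp(P\vee Q)$ grow, so no additive bookkeeping against independent baselines can close the argument. What is actually true, and what the paper proves, is a bound on the \emph{sum}: writing $\phi=\mathcal{E}^{(k)}_{j}\Delta^{(k)}\sigma\Delta^{(k)}\sigma^{-1}\mathcal{E}^{(k)}_{j}$ and $\psi=\Gamma^{(k)}\Delta^{(k)}(\Gamma^{(k)})^{-1}\Delta^{(k)}$, the Euler-characteristic (genus) formula gives $\sharp(\phi)+\sharp(\phi\psi)+\sharp(\psi)=2km+2(c-g)$ with $g\geq 0$ and $c$ the number of orbits of $\langle\phi,\psi\rangle$; since $\sharp(\phi)=2\sharp(\sigma)$, $\sharp(\psi)=2k$ and $\sharp(\phi\psi)=2\sharp(P\vee Q)$, the $\sharp(\sigma)$ terms cancel in $f_{k,j}$ and one gets $f_{k,j}=c-g-2k\leq c-2k$. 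Lemma \ref{lem208} enters only to certify $c\leq 2$ (transitivity on $V_j$ and on $W_j$, treated jointly or separately according to the dichotomy). Your proposal never invokes this joint bound, and without it the merging count cannot be completed; the difficulty you flag at the end is not mere delicacy but a structurally wrong decomposition.

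There is also a secondary error that feeds the confusion: you assert that $\phi$ is cyclically conjugate to $P\circ Q$ and that $2\sharp(P\vee Q)=\sharp(\phi)$. In fact $\sharp(\phi)=2\sharp(\sigma)$ always, independent of any linkings, while $P\circ Q$ is conjugate to the \emph{product} $\psi\circ\phi$; the identity $2\sharp(P\vee Q)=\sharp(P\circ Q)$ from \eqref{lem303} therefore concerns $\sharp(\psi\phi)$, not $\sharp(\phi)$. This conflation is what leads you to attribute the chain-linking deficit to $\sharp(\phi)$, a quantity on which the linkings have no effect. To repair the proof, replace the additive deficit count by the genus formula applied to $\phi$ and $\psi$ on $[\pm km]$ when $V_j\sim W_j$, and to their restrictions to $V_j$ and $W_j$ separately when $V_j\nsim W_j$ --- which is precisely the paper's route.
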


\begin{proof}

(Case 1: $V_j\sim W_j$ by $\mathcal{E}^{(k)}_{j} \Delta^{(k)} \sigma \Delta^{(k)} \sigma^{-1} \mathcal{E}^{(k)}_{j}$) In this case, the subgroup generated by $\mathcal{E}^{(k)}_{j} \Delta^{(k)} \sigma \Delta^{(k)} \sigma^{-1} \mathcal{E}^{(k)}_{j}$ and $\Gamma^{(k)} \Delta^{(k)} (\Gamma^{(k)})^{-1} \Delta^{(k)}$ acts on $[\pm km]$ transitively by Lemma \ref{lem208} and the given assumption, so there exists a non-negative integer $g$ satisfying \small
    \begin{align}
        \notag&2km+2(1-g)\\
        \notag&=\sharp(\mathcal{E}^{(k)}_{j} \Delta^{(k)} \sigma \Delta^{(k)} \sigma^{-1} \mathcal{E}^{(k)}_{j})+\sharp(\mathcal{E}^{(k)}_{j} \Delta^{(k)} \sigma \Delta^{(k)} \sigma^{-1} \mathcal{E}^{(k)}_{j} \Gamma^{(k)}\Delta^{(k)}(\Gamma^{(k)})^{-1}\Delta^{(k)})\\
        &{\color{white}ttttttttttttttttttttttttttttttttttttttttttttt}+\sharp(\Gamma^{(k)}\Delta^{(k)}(\Gamma^{(k)})^{-1}\Delta^{(k)})\\
        &=2 \sharp(\sigma)+\sharp(\mathcal{E}^{(k)}_{j} \Delta^{(k)} \sigma \Delta^{(k)} \sigma^{-1} \mathcal{E}^{(k)}_{j} \Gamma^{(k)}\Delta^{(k)}(\Gamma^{(k)})^{-1}\Delta^{(k)})+2k.
    \end{align}
\normalsize See \cite{MS17} for more details about the existence of $g$, and the second equality comes from direct calculations. Then we have
    \begin{align}\label{eqn502}
        &2\cdot \sharp(\mathcal{E}^{(k)}_{j} \Gamma^{(k)} \Delta^{(k)} (\Gamma^{(k)})^{-1} \mathcal{E}^{(k)}_{j} \vee \sigma \Delta^{(k)} \sigma^{-1})\\
        &=\sharp(\mathcal{E}^{(k)}_{j} \Gamma^{(k)} \Delta^{(k)} (\Gamma^{(k)})^{-1} \mathcal{E}^{(k)}_{j} \sigma \Delta^{(k)} \sigma^{-1})\\
        &=\sharp(\Gamma^{(k)} \Delta^{(k)} (\Gamma^{(k)})^{-1} \mathcal{E}^{(k)}_{j} \sigma \Delta^{(k)} \sigma^{-1} \mathcal{E}^{(k)}_{j})\\
        &=\sharp(\Gamma^{(k)} \Delta^{(k)} (\Gamma^{(k)})^{-1} \Delta^{(k)} \mathcal{E}^{(k)}_{j} \Delta^{(k)} \sigma \Delta^{(k)} \sigma^{-1} \mathcal{E}^{(k)}_{j})\\
        &=\sharp(\mathcal{E}^{(k)}_{j} \Delta^{(k)} \sigma \Delta^{(k)} \sigma^{-1} \mathcal{E}^{(k)}_{j} \Gamma^{(k)} \Delta^{(k)} (\Gamma^{(k)})^{-1} \Delta^{(k)})\\
        &=2km+2(1-k-g)-2\sharp(\sigma),
    \end{align}
where we used $\mathcal{E}^{(k)}_j \Delta^{(k)}=\Delta^{(k)} \mathcal{E}^{(k)}_j$ at the third equality. Thus, we obtain 
    \begin{align}
        \notag f_{k,j}(\epsilon,\sigma)&= \sharp(\mathcal{E}^{(k)}_{j} \Gamma^{(k)} \Delta^{(k)} (\Gamma^{(k)})^{-1} \mathcal{E}^{(k)}_{j} \vee \sigma \Delta^{(k)} \sigma^{-1}) + \sharp(\sigma) - k(m+1)\\
        &  = (km+(1-k-g)-\sharp(\sigma))+\sharp(\sigma)-k(m+1) \\
        &=1-2k-g \leq 1-2k\leq 2-2k.
    \end{align}

(Case 2: $V_j\nsim W_j$ by $\mathcal{E}^{(k)}_{j} \Delta^{(k)} \sigma \Delta^{(k)} \sigma^{-1} \mathcal{E}^{(k)}_{j}$) In this case, we have 
\begin{equation}
\mathcal{E}^{(k)}_{j} \Delta^{(k)} \sigma \Delta^{(k)} \sigma^{-1} \mathcal{E}^{(k)}_{j}(V_j)=V_j   
\end{equation} 
and the subgroup generated by $\tau_{1,V_j}=\mathcal{E}^{(k)}_{j} \Delta^{(k)} \sigma \Delta^{(k)} \sigma^{-1} \mathcal{E}^{(k)}_{j}\Bigr|_{V_j}$ and $\tau_{2,V_j}=\Gamma^{(k)}\Delta^{(k)}(\Gamma^{(k)})^{-1}\Delta^{(k)}\Bigr|_{V_j}$ acts on $V_j$ transitively by Lemma \ref{lem208}.
    As in the case $V_j\sim W_j$, there exists a non-negative integer $g$ satisfying
    \begin{align}
    km+2(1-g)&=\sharp(\tau_{1,V_j})+\sharp(\tau_{1,V_j}\circ \tau_{2,V_j})+\sharp(\tau_{2,V_j})\\
    &=\sharp(\tau_{1,V_j})+\sharp(\tau_{1,V_j}\circ \tau_{2,V_j})+k,
    \end{align}
    implying 
\begin{align}
    \sharp(\tau_{1,V_j}\circ \tau_{2,V_j})&=km+2-k-2g-\sharp(\tau_{1,V_j}).
\end{align}

On the other side, we define $\tau_{1,W_{j}}=\mathcal{E}^{(k)}_{j} \Delta^{(k)} \sigma \Delta^{(k)} \sigma^{-1} \mathcal{E}^{(k)}_{j}\Bigr|_{W_j}$ and $\tau_{2,W_j}=\Gamma^{(k)}\Delta^{(k)}(\Gamma^{(k)})^{-1}\Delta^{(k)}\Bigr|_{W_j}$ similarly. Then there exists a non-negative integer $g'$ satisfying
    \begin{align}
        &\sharp(\tau_{1,W_j}\circ \tau_{2,W_j})=km+2-k-2g'-\sharp(\tau_{1,W_j}).
    \end{align}
 
Thus, we obtain
    \begin{align}\label{eqn503}
            &2\cdot \sharp(\mathcal{E}^{(k)}_{j} \Gamma^{(k)} \Delta^{(k)} (\Gamma^{(k)})^{-1} \mathcal{E}^{(k)}_{j} \vee \sigma \Delta^{(k)} \sigma^{-1})\\
            &=\sharp(\mathcal{E}^{(k)}_{j} \Delta^{(k)} \sigma \Delta^{(k)} \sigma^{-1} \mathcal{E}^{(k)}_{j} \Gamma^{(k)} \Delta^{(k)} (\Gamma^{(k)})^{-1} \Delta^{(k)})\\
            &=\sharp(\tau_{1,V_j}\circ \tau_{2,V_j})+\sharp(\tau_{1,W_j}\circ \tau_{2,W_j})\\
            &=2(km+2-k)-2(g+g')-\sharp(\tau_{1,V_j})-\sharp(\tau_{1,W_j})\\
            &=2(km+2-k)-2(g+g')-\sharp(\mathcal{E}^{(k)}_{j} \Delta^{(k)} \sigma \Delta^{(k)} \sigma^{-1} \mathcal{E}^{(k)}_{j})\\
            &=2(km+2-k)-2(g+g')-2\sharp(\sigma),
    \end{align}
which leads us to reach the following conclusion
    \begin{align}
        \notag f_{k,j}(\epsilon,\sigma)=& \sharp(\mathcal{E}^{(k)}_{j} \Gamma^{(k)} \Delta^{(k)} (\Gamma^{(k)})^{-1} \mathcal{E}^{(k)}_{j} \vee \sigma \Delta^{(k)} \sigma^{-1}) + \sharp(\sigma) - k(m+1)\\
        &=2-2k-(g+g')\leq 2-2k.
    \end{align}

\end{proof}

\section{A central limit theorem for partial transposes}\label{sec-CLT}

Let us denote by $\mathbf{d}=(d_{1},\cdots,d_{n})$ and by $\displaystyle \mu(\mathbf{d})= \min_{j\in [n]} d_{j}\geq 2$ in this section. Let $p=p(\mathbf{d})$ and $n=n(\mathbf{d})$ be functions of $\mathbf{d}$. Recall that there exist partial transposes $\left\{W^{\epsilon}\right\}_{\epsilon\in \left\{0,1\right\}^n}\subseteq \bigotimes_{j=1}^{n} M_{d_{j}}(\Comp)$ of the multipartite Wishart matrices $W=W_{d_{1} \cdots d_{n},p}$. In this section, the following product
\begin{equation}
    D(\epsilon, \sigma) =\prod_{j=1}^n d_j^{f_{j}(\epsilon,\sigma)}.
\end{equation}
will play a crucial role.

To establish a central limit theorem, we take a family $\left\{a_{\epsilon}\right\}_{\epsilon }$ consisting of the centered partial transposes
\begin{equation}
   a_{\epsilon} = W^{\epsilon}-c\cdot \text{Id}\in \bigotimes_{j=1}^{n} M_{d_{j}}(\Comp).
\end{equation}
The main result of this section is that, if we take subsets $B_{\mathbf{d}}\subseteq \left\{0,1\right\}^{n({\mathbf{d}})}$ such that $\displaystyle \lim |B_{\bf d}|^m \left ( \frac{1}{\mu(\bf d)}+\left | \frac{p}{d_1d_2\cdots d_n}-c\right | \right )=0$ for all $m\in \n$ and $\lim |B_{\bf d}|=\infty$, then the following random matrices
\begin{equation}
   s_{\mathbf{d}}=\frac{1}{\sqrt{|B_{\mathbf{d}}|}}\sum_{\epsilon\in B_{\mathbf{d}}} a_{\epsilon}
\end{equation}
 converge in moments to the semi-circular element of mean $0$ and variance $c$. To compute the limit of $m$-th moments $\displaystyle \lim(\mathbb{E}\otimes \text{tr})(s_{\mathbf{d}}^m)$, we will focus on the following values
\begin{equation}
    \left ( \mathbb{E}\otimes \text{tr}\right )(a_{x(1)}a_{x(2)}\cdots a_{x(m)})
\end{equation}
where $x:[m]\rightarrow B_{\mathbf{d}}$ is an arbitrary function. Our basic strategy is to recover the arguments in \cite[Section 2.1]{MS17} with detailed analysis of asymptotic bounds. Let us begin with the following lemma.

\begin{lemma}\label{welldef}
    For arbitrary functions $\epsilon:[m]\rightarrow B_{\mathbf{d}}$ and $\epsilon':[m]\rightarrow B_{\mathbf{d}}$ such that $\ker(\epsilon)=\ker(\epsilon')$, we have
    \begin{align}
       &\left |(\mathbb{E} \otimes \text{tr}) (a_{\epsilon(1)} \cdots a_{\epsilon(m)}) - (\mathbb{E} \otimes \text{tr}) ( a_{\epsilon'(1)} \cdots a_{\epsilon'(m)}) \right | \\
       &\leq \frac{2^{m+1}\cdot m! \cdot (1+c)^{m}}{\mu(\mathbf{d})}  \sum_{s=0}^{m} \left ( \frac{p}{d_{1} \cdots d_{n}} \right )^{s}.
    \end{align}
\end{lemma}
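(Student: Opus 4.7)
The plan is to expand both moments via multilinearity, apply the moment formula of Theorem~\ref{thm302} (with $k=1$) to each resulting term, isolate the dominant contributions that depend only on $\ker(\epsilon)$, and bound the remainder via Lemma~\ref{lem400}.

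First I would use that $\text{Id}$ is central to write
\begin{equation*}
(\mathbb{E}\otimes\text{tr})\bigl(a_{\epsilon(1)}\cdots a_{\epsilon(m)}\bigr)=\sum_{S\subseteq[m]}(-c)^{m-|S|}\,\alpha(\epsilon,S),
\end{equation*}
where $\alpha(\epsilon,S):=(\mathbb{E}\otimes\text{tr})(W^{\epsilon(i_1)}\cdots W^{\epsilon(i_s)})$ with $S=\{i_1<\cdots<i_s\}$, and similarly for $\epsilon'$. Theorem~\ref{thm302} with $k=1$ then converts $\alpha(\epsilon,S)$ into $\sum_{\sigma\in S_{|S|}}q^{\sharp(\sigma)}\prod_{j=1}^{n}d_j^{f_j(\epsilon|_S,\sigma)}$ with $q=p/(d_1\cdots d_n)$.

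Next I would invoke Lemma~\ref{lem400}: $f_j(\epsilon|_S,\sigma)\leq 0$ for every $j$, and $f_j(\epsilon|_S,\sigma)=0$ for all $j$ forces $\pi_\sigma$ to be non-crossing together with $\ker(\epsilon|_S)\geq\pi_\sigma$. The key point is that this latter condition depends on $\epsilon$ only through $\ker(\epsilon|_S)$, which in turn is determined by $\ker(\epsilon)$. Since $\ker(\epsilon)=\ker(\epsilon')$, the leading parts of the two expansions (all $(S,\sigma)$ contributions with $f_j\equiv 0$) agree termwise and cancel in the difference.

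For each remaining ``error'' term there exists $j_0$ with $f_{j_0}\leq -1$ while every other $f_j\leq 0$, so $\prod_j d_j^{f_j(\epsilon|_S,\sigma)}\leq 1/d_{j_0}\leq 1/\mu(\mathbf{d})$. Applying the triangle inequality, the difference is bounded by
\begin{equation*}
\frac{2}{\mu(\mathbf{d})}\sum_{s=0}^m\binom{m}{s}c^{m-s}\sum_{\sigma\in S_s}q^{\sharp(\sigma)},
\end{equation*}
and the crude estimates $\binom{m}{s}\leq 2^m$, $s!\leq m!$, $q^{\sharp(\sigma)}\leq\sum_{k=0}^m q^k$, and $\sum_{k=0}^m c^k\leq(1+c)^m$ combine to give the stated bound $\frac{2^{m+1}\cdot m!\cdot(1+c)^m}{\mu(\mathbf{d})}\sum_{s=0}^m q^s$.

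The only genuine subtlety is verifying the leading-order cancellation; once one observes that the constraint $\ker(\epsilon|_S)\geq\pi_\sigma$ depends on $\epsilon$ only through $\ker(\epsilon)$, the remainder of the argument is routine bookkeeping of combinatorial factors.
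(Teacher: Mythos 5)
Your proposal is correct and follows essentially the same route as the paper: expand multilinearly over subsets, apply Theorem~\ref{thm302} with $k=1$, observe via Lemma~\ref{lem400} that the condition $\prod_j d_j^{f_j(\epsilon|_S,\sigma)}=1$ depends on $\epsilon$ only through $\ker(\epsilon|_S)$ (hence through $\ker(\epsilon)$), so the leading terms cancel, and bound each surviving term by $2/\mu(\mathbf{d})$ before the combinatorial bookkeeping. The paper spells out the kernel-dependence step in more detail (showing each level set of $\epsilon'|_E$ is a union of cycles of $\tau$), but the substance is identical and your constant-tracking lands on the same bound.
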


\begin{proof}
Let us begin with the following formula
    \begin{align}
        &a_{\epsilon(1)} \cdots a_{\epsilon(m)} \\
        & = (W^{\epsilon(1)}-c\cdot \text{Id}) \cdots (W^{\epsilon(m)}-c\cdot \text{Id}) =  \sum_{E\subseteq [m]} (-c)^{m-|E|}  \prod_{t\in E} W^{\epsilon(t)} ,
    \end{align}
    and write $l=|E|$ for simplicity.
    Consider $\epsilon|_{E}$ as a function from $[l] = [|E|]$ into $B_{\mathbf{d}}$.
    Then, since
   \begin{align}
       (\mathbb{E}\otimes \text{tr})\left ( \prod_{t\in E} W^{\epsilon(t)}\right ) &= \sum_{\tau \in S_{l}} \left( \frac{p}{d_{1} \cdots d_{n}} \right)^{\sharp(\tau)} \prod_{j=1}^{n} d_{j}^{f_{j}(\epsilon|_{E}, \tau)}\\
    & = \sum_{\tau \in S_{l}} \left( \frac{p}{d_{1} \cdots d_{n}} \right)^{\sharp(\tau)} D(\epsilon|_{E}, \tau)
    \end{align}
for each $E \subseteq [m]$ by Theorem \ref{thm302}, we have 
\begin{align}\label{eq600}
    &\left |(\mathbb{E} \otimes \text{tr}) (a_{\epsilon(1)} \cdots a_{\epsilon(m)}) - (\mathbb{E} \otimes \text{tr}) ( a_{\epsilon'(1)} \cdots a_{\epsilon'(m)}) \right |\\
    & = \left | \sum_{E\subseteq [m]} (-c)^{m-|E|}\cdot \sum_{\tau \in S_{l}} \left( \frac{p}{d_{1} \cdots d_{n}} \right)^{\sharp(\tau)} \left [ D(\epsilon|_{E}, \tau) - D(\epsilon'|_{E}, \tau)  \right ] \right | .
\end{align}

Note that the given condition $\ker(\epsilon)=\ker(\epsilon')$ implies 
\begin{equation}
    D(\epsilon|_{E},\tau)=1~\Longleftrightarrow D(\epsilon'|_{E},\tau)=1.
\end{equation}
Indeed, any restriction $\epsilon|_E$ can be considered a function which is of the form
\begin{equation}
    \epsilon|_E(i)=\sum_{z\in B_{\mathbf{d}}} z\cdot \chi_{E_z}(i)
\end{equation}
for all $i\in [l]$ where $E_z=(\epsilon|_E)^{-1}(z)=\left\{i\in [l]: \epsilon|_{E}(i)=z\right\}$. Then the given assumption $D(\epsilon|_{E},\tau)=1$ implies that the partition of $\tau$ is non-crossoing by Lemma \ref{lem400} (2), and $\epsilon|_E$ is constant on each cycle of $\tau$, i.e. each $E_z$ is a union of cycles of $\tau$ by Lemma \ref{lem400} (1). Thus, we can write 
\begin{equation}
\epsilon|_E(i)=\sum_{z\in B_{\mathbf{d}}} z\cdot \chi_{E_z}(i)=\sum_{z\in B_{\mathbf{d}}}\sum_p z\cdot \chi_{\mathfrak{c}_{z,p}}(i)
\end{equation}
where $\displaystyle E_z=\bigcup_{p}\mathfrak{c}_{z,p}$ and $\mathfrak{c}_{z,p}$'s are the disjoint cycles of $\tau$. 

On the other hand, $\epsilon'|_E$ is also written as
\begin{equation}
\epsilon'|_E(i)=\sum_{w\in B_{\mathbf{d}}} w\cdot \chi_{E_w'}(i)
\end{equation}
where $E_w'=(\epsilon'|_E)^{-1}(w)=\left\{i\in [l]: \epsilon'|_{E}(i)=w\right\}$, and the given condition $\text{ker}(\epsilon)=\text{ker}(\epsilon')$ forces $E_w'$ to be equal to one of $E_z$, which is a union of cycles $\mathfrak{c}_{z,p}$ of $\tau$.
Hence, $\epsilon'|_E$ is a linear combination of characteristic functions on cycles of $\tau$.
In other words, $\epsilon'|_E$ is constant on each cycle of $\tau$.
This leads us to conclude that $D(\epsilon'|_{E},\tau)=1$ by Lemma \ref{lem400} since the partition of $\tau$ is non-crossing.

Let us return to \eqref{eq600}. Using the above conclusion
\begin{equation}
D(\epsilon|_{E},\tau)=1\Leftrightarrow D(\epsilon'|_{E},\tau)=1,    
\end{equation}
we obtain
\begin{align}
    &\left |(\mathbb{E} \otimes \text{tr}) (a_{\epsilon(1)} \cdots a_{\epsilon(m)}) - (\mathbb{E} \otimes \text{tr}) ( a_{\epsilon'(1)} \cdots a_{\epsilon'(m)}) \right |\\
    &\leq  \sum_{E\subseteq [m]} c^{m-|E|}\cdot \sum_{\tau \in S_{l}:  D(\epsilon|_{E},\tau)<1 } \left( \frac{p}{d_{1} \cdots d_{n}} \right)^{\sharp(\tau)} \left | {D(\epsilon|_{E},\tau)}-{D(\epsilon'|_{E},\tau)} \right |.
    \end{align}
Furthermore, since
\begin{align}
    &\sum_{\tau \in S_{l}:  D(\epsilon|_{E},\tau)<1 } \left( \frac{p}{d_{1} \cdots d_{n}} \right)^{\sharp(\tau)} \left | {D(\epsilon|_{E},\tau)}-{D(\epsilon'|_{E},\tau)} \right |\\
    \notag&\leq \sum_{\tau \in S_{l}:  D(\epsilon|_{E},\tau)<1 } \left( \frac{p}{d_{1} \cdots d_{n}} \right)^{\sharp(\tau)} {D(\epsilon|_{E},\tau)}\\
    &{\color{white}ttttttttt}+\sum_{\tau \in S_{l}:  D(\epsilon'|_{E},\tau)<1 } \left( \frac{p}{d_{1} \cdots d_{n}} \right)^{\sharp(\tau)} {D(\epsilon'|_{E},\tau)}\\
    & \leq 2 \cdot \sum_{\tau \in S_{l}:  D(\epsilon|_{E},\tau)<1 } \left( \frac{p}{d_{1} \cdots d_{n}} \right)^{\sharp(\tau)} \cdot \frac{1}{\mu(\mathbf{d})} \leq  \frac{2\cdot m!}{\mu(\mathbf{d})}  \sum_{s=0}^{m} \left ( \frac{p}{d_{1} \cdots d_{n}} \right )^{s},
\end{align}
we can conclude that
\begin{align}
    &\left |(\mathbb{E} \otimes \text{tr}) (a_{\epsilon(1)} \cdots a_{\epsilon(m)}) - (\mathbb{E} \otimes \text{tr}) ( a_{\epsilon'(1)} \cdots a_{\epsilon'(m)}) \right |\\
    &\leq \sum_{E\subseteq [m]} c^{m-|E|}\cdot \frac{2\cdot m!}{\mu(\mathbf{d})}  \sum_{s=0}^{m} \left ( \frac{p}{d_{1} \cdots d_{n}} \right )^{s}\\
    &\le \sum_{E\subseteq [m]} (1+c)^{m}\cdot \frac{2\cdot m!}{\mu(\mathbf{d})}  \sum_{s=0}^{m} \left ( \frac{p}{d_{1} \cdots d_{n}} \right )^{s}\\
    &\le \frac{2^{m+1}\cdot m! \cdot (1+c)^{m}}{\mu(\mathbf{d})}  \sum_{s=0}^{m} \left ( \frac{p}{d_{1} \cdots d_{n}} \right )^{s}.
    \end{align}

\end{proof}

The above Lemma \ref{welldef} allows us to rely on $\text{ker}(\epsilon)$ whose structure is categorized into the following four distinct cases:
\begin{itemize}
    \item ({\textbf{Case A}}) $\text{ker}(\epsilon)$ contains a singleton element
    \item ({\textbf{Case B}}) $\text{ker}(\epsilon)$ does not contain a singleton element, and $\epsilon$ is not a pairing.
    \item (\textbf{Case C}) $\text{ker}(\epsilon)$ is a pairing and there exists $i\in [m-1]$ such that $\left\{i,i+1\right\}\in \text{ker}(\epsilon)$
    \item (\textbf{Case D}) $\text{ker}(\epsilon)$ is a pairing, and $\epsilon(i)\neq \epsilon(i+1)$ for all $i\in [m-1]$.
\end{itemize}

Our strategy is to prove Lemma \ref{singleton}, Lemma \ref{double}, Lemma \ref{freeness} to cover (\textbf{Case A}), (\textbf{Case C}), (\textbf{Case D}) respectively, and the following technical Lemma \ref{lem-technical} plays is an important ingredient to establish Lemma \ref{singleton} and Lemma \ref{double}.

\begin{lemma}\label{lem-technical}
\begin{enumerate}
    \item For any $\tau\in S_l$, let us denote by $\tau_1=\tau\circ (l+1)\in S_{l+1}$. Then we have \begin{equation}
\sharp(\mathcal{E} \Gamma \Delta \Gamma^{-1} \mathcal{E} \vee \tau \Delta \tau^{-1}) = \sharp(\mathcal{E}^{'} \Gamma^{'} \Delta^{'} (\Gamma^{'})^{-1} \mathcal{E}^{'} \vee \tau_1 \Delta^{'} \tau_1^{-1}).
\end{equation}
Here, $\mathcal{E}$, $\Delta$, $\Gamma$ are acting on $[\pm l]$, whereas $\mathcal{E}'$, $\Delta'$, $\Gamma'$ are the analogous permutations on $[\pm (l+1)]$.
\item For any $\tau\in S_l$, let us denote by $\tau_2=\tau\circ (l+1,l+2)\in S_{l+2}$. Then we have 
\begin{equation}
\sharp(\mathcal{E} \Gamma \Delta \Gamma^{-1} \mathcal{E} \vee \tau \Delta \tau^{-1})+1 = \sharp(\mathcal{E}^{''} \Gamma^{''} \Delta^{''} (\Gamma^{''})^{-1} \mathcal{E}^{''} \vee \tau_2 \Delta^{''} \tau_2^{-1})
\end{equation}
if $\text{sgn}(\mathcal{E}''(l+1))=\text{sgn}(\mathcal{E}''(l+2))$. Here, $\mathcal{E}'',\Delta'',\Gamma''$ are the analogous permutations on $[\pm (l+2)]$.
\end{enumerate}

\end{lemma}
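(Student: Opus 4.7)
The plan is to make both pairings on each side explicit as sets of pairs on $[\pm l]$, $[\pm(l+1)]$ or $[\pm(l+2)]$, view each side as a graph whose connected components encode the blocks of the join, and then check directly how the new vertices $\pm(l+1)$ (and, for part (2), also $\pm(l+2)$) are connected to the older structure. The elementary fact I use throughout is that, for any permutation $\pi$ on $[\pm r]$ that is the identity on $[-r]$, the pairing $\pi \Delta \pi^{-1}$ has pairs $\{i, -\pi^{-1}(i)\}$ for $i \in [r]$. In particular $\Gamma \Delta \Gamma^{-1}$ has the pairs $\{1,-l\}, \{2,-1\}, \ldots, \{l, -(l-1)\}$, and on the extended sides the analogous formulas produce explicit pairs that differ from the smaller versions only at the positions touching the newly added indices.

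For part (1), the extension $\tau_1$ adds one fixed point $l+1$, so $\tau_1 \Delta' \tau_1^{-1}$ acquires a single new pair $\{l+1,-(l+1)\}$ and otherwise agrees with $\tau \Delta \tau^{-1}$. Passing from $\Gamma$ to $\Gamma' = (1,2,\ldots,l+1)$ replaces the single pair $\{1,-l\}$ of $\Gamma \Delta \Gamma^{-1}$ by the two pairs $\{1,-(l+1)\}$ and $\{l+1,-l\}$ in $\Gamma' \Delta' (\Gamma')^{-1}$, the other pairs being unchanged. After applying $\mathcal{E}'$ (which agrees with $\mathcal{E}$ on $[\pm l]$), the identity $\mathcal{E}'(-(l+1)) = -\mathcal{E}'(l+1)$ implies that the three new edges $\{\mathcal{E}(1), \mathcal{E}'(-(l+1))\}$, $\{\mathcal{E}'(l+1), \mathcal{E}(-l)\}$ and $\{l+1,-(l+1)\}$ form a path of length three from $\mathcal{E}(1)$ to $\mathcal{E}(-l)$, irrespective of the sign of $\mathcal{E}'(l+1)$. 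Since $\mathcal{E}(1)$ and $\mathcal{E}(-l)$ were already joined on the left-hand side by the single edge $\{\mathcal{E}(1),\mathcal{E}(-l)\}$ of $\mathcal{E} \Gamma \Delta \Gamma^{-1} \mathcal{E}$, replacing one edge by a length-three path preserves the number of connected components.

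For part (2), the transposition $(l+1,l+2)$ in $\tau_2$ produces two new pairs $\{l+1,-(l+2)\}$ and $\{l+2,-(l+1)\}$ in $\tau_2 \Delta'' \tau_2^{-1}$, while $\Gamma'' = (1,\ldots,l+2)$ replaces $\{1,-l\}$ by the three pairs $\{1,-(l+2)\}$, $\{l+1,-l\}$, $\{l+2,-(l+1)\}$ in $\Gamma'' \Delta'' (\Gamma'')^{-1}$. Under the hypothesis $\text{sgn}(\mathcal{E}''(l+1)) = \text{sgn}(\mathcal{E}''(l+2))$, I check the two matching-sign cases (both positive, both negative) explicitly. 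In each of them, direct bookkeeping on the six vertices $\{\mathcal{E}(1), \mathcal{E}(-l), \pm(l+1), \pm(l+2)\}$ shows that two of the new vertices are absorbed into the block containing $\mathcal{E}(1)$ and $\mathcal{E}(-l)$ via a length-three path exactly as in part (1), while the remaining two new vertices form one new block of size two (the same pair $\{\pm(l+1) \cup \pm(l+2)\}$-type appears in both $\mathcal{E}'' \Gamma'' \Delta'' (\Gamma'')^{-1} \mathcal{E}''$ and $\tau_2 \Delta'' \tau_2^{-1}$ and is disjoint from the rest). This new isolated block is the source of the $+1$ in the identity.

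The main obstacle is carefully tracking the sign conventions in part (2); without the hypothesis $\text{sgn}(\mathcal{E}''(l+1)) = \text{sgn}(\mathcal{E}''(l+2))$, the four new vertices instead chain into one long path connecting $\mathcal{E}(1)$ and $\mathcal{E}(-l)$ with no extra component, and the identity would fail by exactly $1$. Once the sign hypothesis is imposed, the two matching-sign subcases are related by a global sign flip (conjugation by $\Delta''$), so only one of them actually needs to be written out in full detail. The remainder is a routine verification on the explicit graph pictures.
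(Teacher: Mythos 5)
Your proposal is correct and follows essentially the same route as the paper's proof: you identify exactly how the extended pairings differ from the originals (the pair $\{\mathcal{E}(1),-\mathcal{E}(l)\}$ is replaced by a path through the new elements, and in part (2) the extra pair $\{\mathcal{E}''(l+2),-\mathcal{E}''(l+1)\}$ common to both pairings yields the isolated new block responsible for the $+1$), which is precisely the content of the paper's facts (A)--(D) and its subsequent block-tracking argument. Your graph-theoretic "edge replaced by a length-three path" phrasing is a slightly cleaner packaging of the same verification, and your observation that the sign hypothesis is exactly what prevents the four new vertices from chaining into a single path matches the role it plays in the paper.
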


\begin{proof}

(1) First of all, it is straightforward to check the following facts:
\begin{enumerate}
    \item [(A)] $\tau_1 \Delta' \tau_1^{-1}=\tau \Delta \tau^{-1} \circ (-(l+1),l+1)$,
    \item [(B)] $(-\mathcal{E}(l),\mathcal{E}(1))$ is one of the disjoint cycles in $\mathcal{E} \Gamma \Delta \Gamma^{-1} \mathcal{E}$,
    \item [(C)] $\mathcal{E} \Gamma \Delta \Gamma^{-1} \mathcal{E}=\mathcal{E}^{'} \Gamma^{'} \Delta^{'} (\Gamma^{'})^{-1} \mathcal{E}^{'}$ on $[\pm l]\setminus \left\{-\mathcal{E}(l),\mathcal{E}(1)\right\}$,
    \item [(D)] $(-\mathcal{E}'(l+1),\mathcal{E}(1))$ and $(\mathcal{E}'(l+1),-\mathcal{E}(l))$ are disjoint cycles of $\mathcal{E}' \Gamma' \Delta' (\Gamma')^{-1} \mathcal{E}'$. In particular, we have 
    \begin{equation}
    \mathcal{E} \Gamma \Delta \Gamma^{-1} \mathcal{E}= \mathcal{E}' \Gamma' \Delta' (\Gamma')^{-1} \mathcal{E}'\circ \tau_1 \Delta' \tau_1^{-1} \circ \mathcal{E}' \Gamma' \Delta' (\Gamma')^{-1} \mathcal{E}'.
    \end{equation}
    on $\left\{-\mathcal{E}(l),\mathcal{E}(1)\right\}$.
\end{enumerate}

Let us suppose that $\left\{B_1,B_2,\cdots,B_N\right\}$ is the disjoint decomposition of blocks of $\mathcal{E} \Gamma \Delta \Gamma^{-1} \mathcal{E} \vee \tau \Delta \tau^{-1}$, and we may assume that
\begin{equation}
    B_1=\left\{\mathcal{E}(1),-\mathcal{E}(l)\right\}\cup T \subseteq [\pm l]
\end{equation}
since (B) implies $-\mathcal{E}(l)\sim \mathcal{E}(1)$ by $\mathcal{E} \Gamma \Delta \Gamma^{-1} \mathcal{E}$. Here, we may assume that $T$ is disjoint from $\left\{\mathcal{E}(1),-\mathcal{E}(l)\right\}$.

On the other hand, we now claim that 
\begin{equation}
    \{B_1\cup \{ \pm \mathcal{E}^{'}(l+1) \}\}\cup \{B_2,\cdots,B_N\}
\end{equation} 
is the disjoint decomposition of blocks of $\mathcal{E}^{'} \Gamma^{'} \Delta^{'} (\Gamma^{'})^{-1} \mathcal{E}^{'} \vee \tau_1 \Delta^{'} \tau_1^{-1}$. Indeed, (A) and (C) explain why $B_2,\cdots,B_N$ are the disjoint blocks, so the only remaining part is to prove that 
\begin{equation}
    B_1\cup \{ \pm \mathcal{E}^{'}(l+1) \}=\left\{\mathcal{E}(1),-\mathcal{E}(l)\right\}\cup \{ \pm \mathcal{E}^{'}(l+1) \} \cup T
\end{equation} 
is a disjoint block of $\mathcal{E}^{'} \Gamma^{'} \Delta^{'} (\Gamma^{'})^{-1} \mathcal{E}^{'} \vee \tau_1 \Delta^{'} \tau_1^{-1}$. Firstly, let us prove that any elements $x,x'$ in $T$ are connected by $\mathcal{E}^{'} \Gamma^{'} \Delta^{'} (\Gamma^{'})^{-1} \mathcal{E}^{'}$ and $\tau_1 \Delta^{'} \tau_1^{-1}$. Our assumption provides a sequence $(x_i)_{i=0}^t$ such that $x_0=x\in T$, $x_t=x'\in T$ and $x_{i}=(\mathcal{E} \Gamma \Delta \Gamma^{-1} \mathcal{E})(x_{i-1})$ or $x_{i}=(\tau \Delta \tau^{-1})(x_{i-1})$ for each $i\in [t]$. If $(x_i)_{i=0}^t\subseteq T$, then all the actions of $\mathcal{E} \Gamma \Delta \Gamma^{-1} \mathcal{E}$ and $\tau \Delta \tau^{-1}$ coincide with the actions of $\mathcal{E}' \Gamma' \Delta' (\Gamma')^{-1} \mathcal{E}'$ and $\tau_1 \Delta' \tau_1^{-1}$ by (A) and (C), so the conclusion follows immediately. Now, if we suppose that $x_{i}=(\tau \Delta \tau^{-1})(x_{i-1})\in \left\{\mathcal{E}(1),-\mathcal{E}(l)\right\}$ at the $i$-th step, then we may assume that the next two elements are given by 
\begin{equation}
    \begin{array}{ll}
&x_{i+1}=(\mathcal{E} \Gamma \Delta \Gamma^{-1} \mathcal{E})(x_{i})\in \left\{\mathcal{E}(1),-\mathcal{E}(l)\right\},\\
&x_{i+2}=(\tau \Delta \tau^{-1})(x_{i+1})\in T.
\end{array}
\end{equation}
Furthermore, the action $\mathcal{E} \Gamma \Delta \Gamma^{-1} \mathcal{E}$ at the $(i+1)$-th step can be replaced by 
\begin{equation}
    \mathcal{E}' \Gamma' \Delta' (\Gamma')^{-1} \mathcal{E}'\circ \tau_1 \Delta' \tau_1^{-1} \circ \mathcal{E}' \Gamma' \Delta' (\Gamma')^{-1} \mathcal{E}'
\end{equation} 
as noted in (D), and the action $\tau \Delta \tau^{-1}$ coincides with $\tau_1 \Delta' \tau_1^{-1}$ by (A). Thus, we can conclude that $x_0=x$ and $x_t=x'$ are connected by the pairings $\mathcal{E}' \Gamma' \Delta' (\Gamma')^{-1} \mathcal{E}'$ and $\tau_1 \Delta' \tau_1^{-1}$. For example, if $x_i=-\mathcal{E}(l)$, then the original sequence $\cdots, x_{i-1}, x_i, x_{i+1}, x_{i+2},\cdots$ corresponds to the blue-green-blue paths, and the green path from $x_i$ to $x_{i+1}$ is replaced by the three red paths in the following figure:
\begin{figure}[htb!]
    \centering
    \includegraphics[scale=0.7]{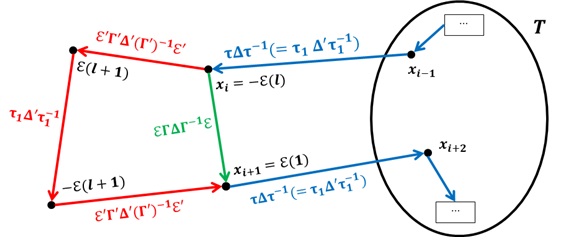}
\end{figure}

Furthermore, (A) and (D) tell us that all elements of $\left\{\mathcal{E}(1),-\mathcal{E}(l)\right\}\cup \left\{ \pm \mathcal{E}'(l+1) \right\}$ are connected by $\mathcal{E}^{'} \Gamma^{'} \Delta^{'} (\Gamma^{'})^{-1} \mathcal{E}^{'}$ and $\tau_1 \Delta^{'} \tau_1^{-1}$. Lastly, if we assume there is no element of $T$ connected to $\left\{\mathcal{E}(1),-\mathcal{E}(l)\right\}\cup \left\{ \pm \mathcal{E}'(l+1) \right\}$, then it implies that $T$ is one of the disjoint blocks of $\mathcal{E} \Gamma \Delta \Gamma^{-1} \mathcal{E} \vee \tau \Delta \tau^{-1}$, which contradicts to the fact that $T$ is a strict subset of $B_1$.

(2) In this case, it is straightforward to check the following facts
\begin{enumerate}
    \item [(A)] $\tau_2 \Delta'' \tau_2^{-1} = \tau \Delta \tau^{-1} \circ (-(l+2),l+1)\circ (-(l+1),l+2)$,
    \item [(B)] $(-\mathcal{E}(l),\mathcal{E}(1))$ is one of the disjoint cycles in $\mathcal{E} \Gamma \Delta \Gamma^{-1} \mathcal{E}$,
    \item [(C)] $\mathcal{E} \Gamma \Delta \Gamma^{-1} \mathcal{E}=\mathcal{E}^{''} \Gamma^{''} \Delta^{''} (\Gamma^{''})^{-1} \mathcal{E}^{''}$ on $[\pm l]\setminus \left\{-\mathcal{E}(l),\mathcal{E}(1)\right\}$,
    \item [(D)] $(\mathcal{E}(1),-\mathcal{E}''(l+2))$, $(\mathcal{E}''(l+1),-\mathcal{E}(l))$ are cycles of $\mathcal{E}^{''} \Gamma^{''} \Delta^{''} (\Gamma^{''})^{-1} \mathcal{E}^{''}$, and $(-\mathcal{E}''(l+2),\mathcal{E}''(l+1))$ is a cycle of $\tau_2\Delta''\tau_2^{-1}$. In particular, we have
    \begin{equation}
        \mathcal{E} \Gamma \Delta \Gamma^{-1} \mathcal{E}=\mathcal{E}^{''} \Gamma^{''} \Delta^{''} (\Gamma^{''})^{-1} \mathcal{E}^{''} \circ \tau_2 \Delta^{''} \tau_2^{-1}\circ \mathcal{E}^{''} \Gamma^{''} \Delta^{''} (\Gamma^{''})^{-1} \mathcal{E}^{''}
    \end{equation}
    on $\left\{\mathcal{E}(1),-\mathcal{E}(l)\right\}$.
\end{enumerate}

As in the proof of (1), let us suppose that $\left\{B_1,B_2,\cdots,B_N\right\}$ is the disjoint block decomposition of $\mathcal{E} \Gamma \Delta \Gamma^{-1} \mathcal{E} \vee \tau \Delta \tau^{-1}$, and we may assume that
\begin{equation}
    B_1=\left\{\mathcal{E}(1),-\mathcal{E}(l)\right\}\cup T \subseteq [\pm l]
\end{equation}
and $T$ is disjoint from $\left\{\mathcal{E}(1),-\mathcal{E}(l)\right\}$ since (B) implies $-\mathcal{E}(l)\sim \mathcal{E}(1)$ by $\mathcal{E} \Gamma \Delta \Gamma^{-1} \mathcal{E}$.

From now on, we will claim that there exist precisely $N+1$ disjoint blocks of $\mathcal{E}^{''} \Gamma^{''} \Delta^{''} (\Gamma^{''})^{-1} \mathcal{E}^{''} \vee \tau_2 \Delta^{''} \tau_2^{-1}$. Indeed, (A) and (C) imply that $B_2,B_3,\cdots,B_N$ are $N-1$ disjoint blocks and it is immediate to check that $\left (\mathcal{E}''(l+2),-\mathcal{E}''(l+1) \right )$ is a cycle of both $\mathcal{E}^{''} \Gamma^{''} \Delta^{''} (\Gamma^{''})^{-1} \mathcal{E}^{''}$ and $\tau_2 \Delta^{''} \tau_2^{-1}$. Thus, the only remaining part is to prove that all elements in
\begin{align}
    &B_1\cup \left\{ \mathcal{E}''(l+1),-\mathcal{E}''(l+2) \right\}\\
    &=T\cup \left\{\mathcal{E}(1),-\mathcal{E}(l)\right\} \cup \left\{ \mathcal{E}''(l+1),-\mathcal{E}''(l+2) \right\}
\end{align}
are connected by $\mathcal{E}^{''} \Gamma^{''} \Delta^{''} (\Gamma^{''})^{-1} \mathcal{E}^{''}$ and $\tau_2 \Delta^{''} \tau_2^{-1}$. Firstly, all elements in $T$ are connected by (A), (C), (D), and all elements in $\left\{\mathcal{E}(1),-\mathcal{E}''(l+2)\right\}\cup \left\{\mathcal{E}''(l+1),-\mathcal{E}(l)\right\}$ are also connected by $\mathcal{E}^{''} \Gamma^{''} \Delta^{''} (\Gamma^{''})^{-1} \mathcal{E}^{''}$ and $\tau_2 \Delta^{''} \tau_2^{-1}$ thanks to (D) as in the proof of (1). Then, if we assume that there is no element of $T$ connected to 
\begin{equation}
    \left\{\mathcal{E}(1),-\mathcal{E}''(l+2)\right\}\cup \left\{\mathcal{E}''(l+1),-\mathcal{E}(l)\right\},
\end{equation} 
then $T$ should be one of the disjoint blocks of $\mathcal{E} \Gamma \Delta \Gamma^{-1} \mathcal{E} \vee \tau \Delta \tau^{-1}$. This contradicts to the fact that $T$ is a strict subset of $B_1$.

\end{proof}

Now, let us present an estimate of $(\mathbb{E}\otimes\text{tr})(a_{\epsilon(1)}\cdots a_{\epsilon(m)})$ for ({\bf Case A}).

\begin{lemma}\label{singleton}
    Let $\epsilon:[m]\rightarrow B_{\mathbf{d}}$ be a function and suppose that   $\text{ker}(\epsilon)$ contains a singleton set. Then we have
    \begin{align}
    \notag&\left | \left ( \mathbb{E}\otimes \text{tr}\right )(a_{\epsilon(1)}a_{\epsilon(2)}\cdots a_{\epsilon(m)}) \right |\\
    &\leq 2^{m}(1+c)^{m}m!\left (\frac{1}{\mu(\bf d)}+\left | \frac{p}{d_1d_2\cdots d_n}-c\right | \right )\sum_{s=0}^m \left ( \frac{p}{d_{1} \cdots d_{n}} \right )^s.
    \end{align}
\end{lemma}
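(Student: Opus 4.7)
The plan is to exploit cyclic invariance of the trace together with Lemma \ref{lem-technical}(1) to pair off subsets appearing in the expansion of the centered product. Since $(\mathbb{E}\otimes\text{tr})$ is cyclically invariant and the singleton property of $\ker(\epsilon)$ is preserved under cyclic rotation, I would first reduce to the case that $m$ is a singleton, i.e.\ $\epsilon(m)\neq\epsilon(i)$ for all $i\in[m-1]$. Expanding each factor as $a_{\epsilon(i)}=W^{\epsilon(i)}-c\cdot\text{Id}$ and invoking Theorem \ref{thm302} converts the expectation into a signed sum indexed by subsets $E\subseteq[m]$. Grouping these into pairs $\{E',E'\cup\{m\}\}$ with $E'\subseteq[m-1]$ yields
\begin{equation}
(\mathbb{E}\otimes\text{tr})(a_{\epsilon(1)}\cdots a_{\epsilon(m)}) = \sum_{E'\subseteq[m-1]}(-c)^{m-|E'|-1}\bigl[S_{E'\cup\{m\}} - c\,S_{E'}\bigr],
\end{equation}
where $S_F:=\sum_{\tau\in S_{|F|}}(p/D)^{\sharp(\tau)} D(\epsilon|_F,\tau)$ and $D=d_1\cdots d_n$.

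Next I would establish the key identity $S_{E'\cup\{m\}}=(p/D)\,S_{E'}+R_{E'}$. Writing $l=|E'|$, split the sum according to whether $\tau'\in S_{l+1}$ fixes $l+1$ or not. The permutations with $\tau'(l+1)=l+1$ are precisely the extensions $\tau_1$ of $\tau\in S_l$ appearing in Lemma \ref{lem-technical}(1), and that lemma combined with $\sharp(\tau_1)=\sharp(\tau)+1$ collapses this part of the sum exactly to $(p/D)\,S_{E'}$. For the remaining $\tau'$, the cycle of $\tau'$ containing $l+1$ has length at least two, so the singleton hypothesis forces $\epsilon|_{E'\cup\{m\}}$ to be non-constant on this cycle in at least one coordinate $j\in[n]$. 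Lemma \ref{lem400}(1) then yields $f_j(\epsilon|_{E'\cup\{m\}},\tau')\leq -1$ while the other exponents satisfy $f_{j'}\leq 0$ by Lemma \ref{lem400}(2), so $D(\epsilon|_{E'\cup\{m\}},\tau')\leq 1/\mu(\mathbf{d})$ and the remainder obeys $|R_{E'}|\leq (l+1)!/\mu(\mathbf{d})\cdot\sum_{s=0}^m(p/D)^s$.

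Substituting into the pair identity gives $S_{E'\cup\{m\}}-c\,S_{E'}=(p/D-c)\,S_{E'}+R_{E'}$, which exhibits both sources of smallness $|p/D-c|$ and $1/\mu(\mathbf{d})$. Summing over the $2^{m-1}$ choices of $E'$ with the crude bound $|(-c)^{m-|E'|-1}|\leq (1+c)^{m-1}$ and absorbing constants via $2^{m-1}(1+c)^{m-1}\leq 2^m(1+c)^m$ will yield the stated estimate. The main obstacle I anticipate is verifying Lemma \ref{lem-technical}(1) carefully enough to conclude $D(\epsilon|_{E'\cup\{m\}},\tau_1)=D(\epsilon|_{E'},\tau)$ for \emph{every} coordinate $j\in[n]$ simultaneously, so that the ``fixing $l+1$'' contribution collapses cleanly to $(p/D)\,S_{E'}$; the remaining work is routine bounding of sums indexed by $S_l$ and $S_{l+1}$.
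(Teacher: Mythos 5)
Your proposal is correct and follows essentially the same route as the paper's proof: reduce by traciality to the singleton sitting at position $m$, expand over subsets $E'\subseteq[m-1]$ paired with $E'\cup\{m\}$, collapse the $\sigma(l+1)=l+1$ part of the sum to $(p/(d_1\cdots d_n))\,S_{E'}$ via Lemma \ref{lem-technical}(1), and bound the non-fixing permutations by $\mu(\mathbf{d})^{-1}$ using Lemma \ref{lem400}(1). The coordinate-wise application of Lemma \ref{lem-technical}(1) that you flag as the main obstacle is exactly how the paper concludes $f_j(\epsilon|_{E\cup\{m\}},\tau_1)=f_j(\epsilon|_E,\tau)$ for every $j\in[n]$, so no additional work is needed there.
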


\begin{proof}

We may assume $\epsilon(i)\neq \epsilon(m)$ for all $i\in [m-1]$ thanks to the given assumption and the traciality of $\mathbb{E}\otimes \text{tr}$.  Let us begin with the following formula
    \begin{align}
        &a_{\epsilon(1)} \cdots a_{\epsilon(m)} = (W^{\epsilon(1)}-c\cdot \text{Id}) \cdots (W^{\epsilon(m)}-c\cdot \text{Id})\\
        & = \left [ \sum_{E\subseteq[m-1]} (-c)^{(m-1)-|E|}  \prod_{t\in E} W^{\epsilon(t)} \right ] (W^{\epsilon(m)}-c\cdot \text{Id}),
    \end{align}
    and write $l=|E|$ for simplicity. Note that 
    \begin{align}
        (\mathbb{E}\otimes\text{tr}) \left ( \left [\prod_{t\in E} W^{\epsilon(t)}\right ] \cdot W^{\epsilon(m)}\right )&= \sum_{\sigma \in S_{l+1}} \left( \frac{p}{d_{1} \cdots d_{n}} \right)^{\sharp(\sigma)} {D(\epsilon|_{E \cup \{ m \}},\sigma)}\\
        (\mathbb{E}\otimes \text{tr})\left ( \prod_{t\in E} W^{\epsilon(t)}\right ) &= \sum_{\tau \in S_{l}} \left( \frac{p}{d_{1} \cdots d_{n}} \right)^{\sharp(\tau)} {D(\epsilon|_{E},\tau)}
    \end{align}
for each $E \subseteq [m-1]$ by Theorem \ref{thm302}. Now, let us understand $\sigma\in S_{l+1}$ as a permutation acting on $[l+1]\cong E\cup \left\{m\right\}$. Then the image of the map $\tau\mapsto \tau_1=\tau\circ (l+1)$ consists of the permutations $\sigma\in S_{l+1}$ satisfying $\sigma(l+1)=l+1$. Furthermore, Lemma \ref{lem-technical} provides the following identity
\begin{align}
    &f_{j}(\epsilon|_{E \cup \{ m \}},\tau_1)=\sharp(\mathcal{E}_{j}^{'} \Gamma^{'} \Delta^{'} (\Gamma^{'})^{-1} \mathcal{E}_{j}^{'} \vee \tau_1 \Delta^{'} \tau_1^{-1}) + \sharp(\tau_1) - (l+2)\\
    &=\sharp(\mathcal{E}_{j} \Gamma \Delta \Gamma^{-1} \mathcal{E}_{j} \vee \tau \Delta \tau^{-1}) + (\sharp(\tau)+1) - (l+2)\\
    &=\sharp(\mathcal{E}_{j} \Gamma \Delta \Gamma^{-1} \mathcal{E}_{j} \vee \tau \Delta \tau^{-1}) + \sharp(\tau) - (l+1)=f_{j}(\epsilon|_{E},\tau),
\end{align}
and we obtain
\begin{align}
    &(\mathbb{E}\otimes \text{tr}) \left (\left [\prod_{t\in E} W^{\epsilon(t)} \right ] (W^{\epsilon(m)}-c\cdot \text{Id}) \right )\\
    &=\underset{\sigma(l+1)\neq l+1}{\sum_{\sigma \in S_{l+1}:}} \left( \frac{p}{d_{1} \cdots d_{n}} \right)^{\sharp(\sigma)} {D(\epsilon|_{E \cup \{ m \}},\sigma)}\\
    &{\color{white}tttttt}+ \sum_{\tau \in S_{l}}\left( \frac{p}{d_{1} \cdots d_{n}}-c \right) \left( \frac{p}{d_{1} \cdots d_{n}} \right)^{\sharp(\tau)} {D(\epsilon|_{E},\tau)}.
\end{align}
In particular, for $\sigma\in S_{l+1}$ with $\sigma(l+1)\neq l+1$, the given assumption  $\left\{m\right\}\in \text{ker}(\epsilon)$ implies that $\epsilon(t)\neq \epsilon(m)$ for any $t\in E\subseteq [m-1]$, i.e. there exists $j\in [n]$ such that $\epsilon(t)_{j} \ne \epsilon(m)_{j}$.
This means that $[\epsilon|_{E \cup \{ m \}}(\cdot)]_j$ is not constant on the cycle containing $l+1$, so we should have $f_{j}(\epsilon|_{E \cup \{ m \}},\sigma) \le -1$ for some $j$ and ${D(\epsilon|_{E \cup \{ m \}},\sigma)}\leq \mu(\mathbf{d})^{-1}$ by Lemma \ref{lem400} (1).
Then, combining all the discussions above with the standard triangle inequality, we obtain
    \begin{align}
        &\left |( \mathbb{E}\otimes\text{tr} ) \left (\left [\prod_{t\in E} W^{\epsilon(t)} \right ] (W^{\epsilon(m)}-c\cdot \text{Id}) \right ) \right |\\
        &\le m!\sum_{s=0}^{m} \left (\frac{p}{d_{1} \cdots d_{n}} \right )^s\cdot \frac{1}{\mu(\mathbf{d})} +m!\left |\frac{p}{d_{1} \cdots d_{n}}-c\right | \sum_{s=0}^m \left (\frac{p}{d_{1} \cdots d_{n}} \right )^s\\
        &\leq m! \left (\frac{1}{\mu(\mathbf{d})}+\left | \frac{p}{d_{1} \cdots d_{n}}-c \right | \right )\sum_{s=0}^m \left ( \frac{p}{d_{1} \cdots d_{n}} \right )^s.
    \end{align}

 Hence, we reach the following conclusion
    \begin{align}
        &\left | ( \mathbb{E}\otimes\text{tr})(a_{\epsilon(1)} \cdots a_{\epsilon(m)}) \right | \\
        &\le \sum_{E\subseteq [m-1]} c^{(m-1)-|E|} m! \left (\frac{1}{\mu(\mathbf{d})}+\left | \frac{p}{d_{1} \cdots d_{n}}-c \right | \right )\sum_{s=0}^m \left ( \frac{p}{d_{1} \cdots d_{n}} \right )^s\\
        &\leq 2^{m}(1+c)^{m} m! \left (\frac{1}{\mu(\mathbf{d})}+\left | \frac{p}{d_{1} \cdots d_{n}}-c \right | \right )\sum_{s=0}^m \left ( \frac{p}{d_{1} \cdots d_{n}} \right )^s
    \end{align}
 
\end{proof}

Now, the following Lemma provides an estimate of $(\mathbb{E}\otimes\text{tr})(a_{\epsilon(1)}\cdots a_{\epsilon(m)})$ for ({\bf Case C}).

\begin{lemma}\label{double}
Let $\epsilon:[m]\rightarrow B_{\mathbf{d}}$ be a function and suppose that there exists $i\in [m-1]$ such that $\left\{i,i+1\right\}\in \text{ker}(\epsilon)$. Then we have
    \begin{align}
        &\left| (\mathbb{E}\otimes \text{tr})\left (\prod_{j\in [m]} a_{\epsilon(j)}\right )-c\cdot (\mathbb{E}\otimes \text{tr})\left (\prod_{j\in [m]\setminus\left\{i,i+1\right\}} a_{\epsilon(j)}\right ) \right |\\
        & \le 2^{m}(1+c)^m (2+c) \left (\frac{1}{\mu(\mathbf{d})}+\left | \frac{p}{d_{1} \cdots d_{n}}-c \right | \right ) m! \sum_{s=0}^m \left ( \frac{p}{d_{1} \cdots d_{n}} \right )^s.
    \end{align}

\end{lemma}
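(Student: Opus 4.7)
The plan is to follow the template of Lemma \ref{singleton}, but now to exploit the pair $\{i,i+1\}$ in order to produce a $p/(d_{1}\cdots d_{n})$-factor that conspires with the ambient $-c$ weights to yield a $(p/(d_{1}\cdots d_{n})-c)$-type cancellation. By the traciality of $\mathbb{E}\otimes\text{tr}$, I may assume $i=m-1$, set $\zeta:=\epsilon(m-1)=\epsilon(m)$, and observe that the hypothesis $\{m-1,m\}\in\ker(\epsilon)$ ensures $\epsilon(j)\neq\zeta$ for every $j\in[m-2]$. Writing $a_\zeta^{2}=(W^\zeta)^{2}-2cW^\zeta+c^{2}\,\text{Id}$ and expanding only the first $m-2$ factors $a_{\epsilon(j)}=W^{\epsilon(j)}-c\,\text{Id}$ via the standard binomial-type decomposition, the left-hand side of the target inequality becomes
\begin{align}
\sum_{E\subseteq [m-2]} (-c)^{m-2-|E|}\Big[M_{E,2}-2cM_{E,1}+(c^{2}-c)M_{E,0}\Big],
\end{align}
where $M_{E,k}:=(\mathbb{E}\otimes\text{tr})(P_E(W^\zeta)^{k})$ and $P_E$ is the ordered product of $W^{\epsilon(t)}$ over $t\in E$.

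Next I expand each $M_{E,k}$ via Theorem \ref{thm302} as $\sum_{\sigma\in S_{l+k}}q^{\sharp(\sigma)}D(\epsilon^{(k)},\sigma)$, where $l=|E|$, $q=p/(d_{1}\cdots d_{n})$, and $\epsilon^{(k)}$ denotes $\epsilon|_{E}$ augmented by $k$ copies of $\zeta$ at the end. I then isolate a principal part using Lemma \ref{lem-technical}. For $M_{E,1}$ the principal permutations are those fixing $l+1$; Lemma \ref{lem-technical}(1) collapses their total contribution to $qM_{E,0}$. For $M_{E,2}$ there are exactly two principal classes: $\sigma=\tau\circ(l+1)\circ(l+2)$, handled by two iterated applications of Lemma \ref{lem-technical}(1) and contributing $q^{2}M_{E,0}$, and $\sigma=\tau\circ(l+1,l+2)$, handled by Lemma \ref{lem-technical}(2) (whose sign hypothesis is automatic since the two appended rows of $\epsilon^{(2)}$ are both equal to $\zeta$) and contributing $qM_{E,0}$. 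Writing $M_{E,1}=qM_{E,0}+R_{1}$ and $M_{E,2}=(q+q^{2})M_{E,0}+R_{2}$, the bracket in the previous display collapses via direct arithmetic into
\begin{align}
(q-c)(q-c+1)M_{E,0}+R_{2}-2cR_{1},
\end{align}
whose leading coefficient $(q-c)(q-c+1)$ is the key $O(|q-c|)$ factor that explains the $|p/(d_1\cdots d_n)-c|$ term in the claim.

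The remaining task is to control the remainders $R_{1}$ and $R_{2}$ by $\mu(\mathbf{d})^{-1}$. Any $\sigma$ contributing to either remainder must have at least one of $l+1,l+2$ in a $\sigma$-cycle that also meets $[l]$. Because $\epsilon(t)\neq\zeta$ for every $t\in E\subseteq[m-2]$, the function $\epsilon^{(k)}$ fails to be constant on such a cycle, so Lemma \ref{lem400}(1) forces $f_{j}(\epsilon^{(k)},\sigma)<0$ for some $j\in[n]$, yielding $D(\epsilon^{(k)},\sigma)\leq \mu(\mathbf{d})^{-1}$. Crude cardinality bounds then give $|R_{k}|\leq (l+k)!\,\mu(\mathbf{d})^{-1}\sum_{s=0}^{l+k}q^{s}$, while $|M_{E,0}|\leq l!\sum_{s=0}^{l}q^{s}$. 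Summing the $E$-contributions against the weights $c^{m-2-|E|}$ and bundling the finite combinatorial constants together produces the announced inequality.

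The main difficulty will be verifying that the principal/non-principal dichotomy for $M_{E,2}$ is genuinely exhaustive; this reduces to the observation that the $\sigma$-cycles that remain entirely inside $\{l+1,l+2\}$ can only be two singletons or the single $2$-cycle. Here the hypothesis that $\{i,i+1\}$ is a full block of $\ker(\epsilon)$, rather than merely $\epsilon(i)=\epsilon(i+1)$, is indispensable: it is precisely this disjointness of $\zeta$ from all other values of $\epsilon|_{[m-2]}$ that makes $\epsilon^{(k)}$ non-constant on every cycle witnessing the non-principal status, and therefore forces the $\mu(\mathbf{d})^{-1}$ decay in $R_{1}$ and $R_{2}$.
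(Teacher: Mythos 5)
Your proposal is correct and follows essentially the same route as the paper's proof: reduction to $i=m-1$ by traciality, binomial expansion over $E\subseteq[m-2]$, Theorem \ref{thm302} to expand the moments, Lemma \ref{lem-technical} (1) and (2) to identify the permutations whose contribution telescopes to multiples of $(\mathbb{E}\otimes\text{tr})(P_E)$, and Lemma \ref{lem400} (1) together with the fact that $\{m-1,m\}$ is a full block of $\ker(\epsilon)$ to force the $\mu(\mathbf{d})^{-1}$ decay on the remaining permutations. The only difference is cosmetic: you expand $a_{\zeta}^{2}$ and collect the single coefficient $(q-c)(q-c+1)$ in one step, whereas the paper telescopes in two stages via the identities \eqref{eq601} and \eqref{eq602}; these are algebraically equivalent.
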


\begin{proof}
    As in the proof of Lemma \ref{singleton}, we may assume $\left\{m-1,m\right\}\in \text{ker}(\epsilon)$ using the traciality of $\mathbb{E}\otimes \text{tr}$ and we have following identity
    \begin{align}
        &a_{\epsilon(1)} \cdots a_{\epsilon(m)} - c a_{\epsilon(1)} \cdots a_{\epsilon(m-2)}\\
        &=\sum_{E\subseteq[m-2]} (-c)^{m-(2+|E|)} \left[ \prod_{t\in E} W^{\epsilon(t)} \right] (W^{\epsilon(m-1)}-c\cdot \text{Id})(W^{\epsilon(m)}-c\cdot \text{Id})\\
        &{\color{white}ttttttt}-c \sum_{E\subseteq[m-2]} (-c)^{m-(2+|E|)} \left[ \prod_{t\in E} W^{\epsilon(t)} \right]
    \end{align}
    Let us write $l=|E|$. Then similar arguments from the proof of Lemma \ref{singleton} give us the following two identities:
\begin{align}
     &(\mathbb{E}\otimes\text{tr})\left (\left[ \prod_{t\in E} W^{\epsilon(t)} \right]W^{\epsilon(m-1)}W^{\epsilon(m)} -c\cdot  \left[ \prod_{t\in E} W^{\epsilon(t)} \right] W^{\epsilon(m-1)}\right )\\
    \notag &=\sum_{\substack{\rho \in S_{l+2}:\\\rho(l+2)\neq l+2 }} \left( \frac{p}{d_{1} \cdots d_{n}} \right)^{\sharp(\rho)} {D(\epsilon|_{E \cup \{ m-1, m \}},\rho)}\\
    \label{eq601}&{\color{white}tttt}+\sum_{\sigma \in S_{l+1}}\left( \frac{p}{d_{1} \cdots d_{n}}-c \right) \left( \frac{p}{d_{1} \cdots d_{n}} \right)^{\sharp(\sigma)} {D(\epsilon|_{E \cup \{ m-1 \}},\sigma)},\\
     &(\mathbb{E}\otimes\text{tr})\left (\left[ \prod_{t\in E} W^{\epsilon(t)} \right]W^{\epsilon(m)} -c\cdot  \left[ \prod_{t\in E} W^{\epsilon(t)} \right]\right )\\
    \notag &=\sum_{\substack{\tau \in S_{l+1}:\\ \tau(l+1)\neq l+1 }} \left( \frac{p}{d_{1} \cdots d_{n}} \right)^{\sharp(\tau)} {D(\epsilon|_{E \cup \{ m \}},\tau)}\\
    \label{eq602}&{\color{white}tttttt}+\sum_{v \in S_{l}}\left( \frac{p}{d_{1} \cdots d_{n}}-c \right) \left( \frac{p}{d_{1} \cdots d_{n}} \right)^{\sharp(v)} {D(\epsilon|_{E},v)}.
\end{align}
Here, both the second sums of \eqref{eq601} and  \eqref{eq602} are dominated by 
\begin{equation}
    \left |\frac{p}{d_{1} \cdots d_{n}}-c\right | \cdot m! \sum_{s=0}^{m}\left ( \frac{p}{d_{1} \cdots d_{n}} \right )^{s} ,
\end{equation}
and the first sum of \eqref{eq602} is dominated by $\displaystyle \frac{m!}{\mu(\mathbf{d})} \sum_{s=0}^{m}\left ( \frac{p}{d_{1} \cdots d_{n}} \right )^s $ as in the proof of Lemma \ref{singleton}.
Thus, it is straightforward to check that\footnotesize
\begin{align}
    &\left | (\mathbb{E}\otimes \text{tr})\left ( \left[ \prod_{t\in E} W^{\epsilon(t)} \right] (W^{\epsilon(m-1)}-c\cdot \text{Id})(W^{\epsilon(m)}-c\cdot \text{Id}) - c\cdot  \prod_{t\in E} W^{\epsilon(t)} \right ) \right |\\
    \notag &\leq \left | \sum_{\substack{\rho \in S_{l+2}:\\\rho(l+2)\neq l+2 }} \left( \frac{p}{d_{1} \cdots d_{n}} \right)^{\sharp(\rho)} {D(\epsilon|_{E \cup \{ m-1, m \}},\rho)} -c\cdot \sum_{v \in S_{l} } \left( \frac{p}{d_{1} \cdots d_{n}} \right)^{\sharp(v)} {D(\epsilon|_{E},v)} \right |\\
    \label{eq603}&{\color{white}tttttt}+ (1+c) \left ( \frac{1}{\mu(\mathbf{d})}+\left | \frac{p}{d_{1} \cdots d_{n}}-c \right | \right ) m!\sum_{s=0}^{m}\left ( \frac{p}{d_{1} \cdots d_{n}} \right )^s
\end{align}

\normalsize
Recall that the image of a function $v\in S_l\mapsto v_2=v\circ (l+1,l+2)\in S_{l+2}$ consists of the permutations whose one of the disjoint cycles is $(l+1,l+2)$ and that 
\begin{align}
    &f_{j}(\epsilon|_{E \cup \{ m-1, m \}},v_2)\\
    &=\sharp(\mathcal{E}_{j}^{''} \Gamma^{''} \Delta^{''} (\Gamma^{''})^{-1} \mathcal{E}_{j}^{''} \vee v_2 \Delta^{''} v_2^{-1}) + \sharp(v_2) - (l+3)\\
    &=\left (\sharp(\mathcal{E}_{j} \Gamma \Delta \Gamma^{-1} \mathcal{E}_{j} \vee v \Delta v^{-1})+1 \right ) + (\sharp(v)+1) - (l+3)\\
    &=\sharp(\mathcal{E}_{j} \Gamma \Delta \Gamma^{-1} \mathcal{E}_{j} \vee v \Delta v^{-1}) + \sharp(v) - (l+1)=f_{j}(\epsilon|_{E},v)
\end{align}
by Lemma \ref{lem-technical} (2).
Let us write $c\notin \rho$ if $c$ is not a disjoint cycle of $\rho\in S_{l+2}$.
Then we have\footnotesize
\begin{align}
    &\left | \sum_{\substack{\rho \in S_{l+2}:\\\rho(l+2)\neq l+2 }} \left( \frac{p}{d_{1} \cdots d_{n}} \right)^{\sharp(\rho)} {D(\epsilon|_{E \cup \{ m-1, m \}},\rho)} -c\cdot \sum_{v \in S_{l} } \left( \frac{p}{d_{1} \cdots d_{n}} \right)^{\sharp(v)} {D(\epsilon|_{E},v)} \right |\\
    \notag& \leq \left |  \sum_{\substack{\rho \in S_{l+2}:\\\rho(l+2)\neq l+2 \\ (l+1,l+2)\notin \rho }} \left( \frac{p}{d_{1} \cdots d_{n}} \right)^{\sharp(\rho)} {D(\epsilon|_{E \cup \{ m-1, m \}},\rho)}\right |\\
    &{\color{white}sssssssssttttttttt}+\left | \sum_{v \in S_{l} } \left (\frac{p}{d_{1} \cdots d_{n}}-c \right ) \left( \frac{p}{d_{1} \cdots d_{n}} \right)^{\sharp(v)} {D(\epsilon|_{E},v)}  \right |.
\end{align}

\normalsize
From the conditions $(l+2)\notin \rho$ and $(l+1,l+2)\notin \rho$, there exists $b_0\in [l]$ such that $\rho(b_0)=l+1$ or $\rho(b_0)=l+2$.
Note that
\begin{equation}
    \epsilon|_{E \cup \{ m-1, m \}}(b_0)\neq \epsilon|_{E \cup \{ m-1, m \}}(l+1)=\epsilon|_{E \cup \{ m-1, m \}}(l+2)
\end{equation}
from the given assumption, so there exists $j\in [n]$ such that 
\begin{equation}
    [\epsilon|_{E \cup \{ m-1, m \}}(b_0)]_{j}\neq [\epsilon|_{E \cup \{ m-1, m \}}(l+1)]_{j}=[\epsilon|_{E \cup \{ m-1, m \}}(l+2)]_{j}.
\end{equation} 
This means that $[\epsilon|_{E \cup \{ m-1, m \}}(\cdot)]_j$ is not constant on the cycle containing $b_0$ in $\rho$, implying ${D(\epsilon|_{E \cup \{ m-1, m \}},\rho)}\leq \mu(\mathbf{d})^{-1}$ for such $\rho$ by Lemma \ref{lem400} (1).
Thus, we obtain\footnotesize
\begin{align}
    &\left | \sum_{\substack{\rho \in S_{l+2}:\\\rho(l+2)\neq l+2 }} \left( \frac{p}{d_{1} \cdots d_{n}} \right)^{\sharp(\rho)} {D(\epsilon|_{E \cup \{ m-1, m \}},\rho)} -c\cdot \sum_{v \in S_{l} } \left( \frac{p}{d_{1} \cdots d_{n}} \right)^{\sharp(v)} {D(\epsilon|_{E},v)} \right |\\
    \label{eq604}&\leq \left (\frac{1}{\mu(\mathbf{d})}+\left | \frac{p}{d_{1} \cdots d_{n}}-c \right | \right )m!\sum_{s=0}^m \left ( \frac{p}{d_{1} \cdots d_{n}} \right )^s.
\end{align}

\normalsize
Finally, combining \eqref{eq603} and \eqref{eq604}, we can conclude that
\begin{align}
    & \left | (\mathbb{E}\otimes \text{tr})\left ( a_{\epsilon(1)} \cdots a_{\epsilon(m)}\right ) - c\cdot (\mathbb{E}\otimes \text{tr}) \left ( a_{\epsilon(1)} \cdots a_{\epsilon(m-2)} \right ) \right | \\
    &\leq \sum_{E\subseteq [m-2]} c^{m-(2+|E|)} (2+c) \left (\frac{1}{\mu(\mathbf{d})}+\left | \frac{p}{d_{1} \cdots d_{n}}-c \right | \right )m!\sum_{s=0}^m \left ( \frac{p}{d_{1} \cdots d_{n}} \right )^s\\
    &\leq 2^{m}(1+c)^m (2+c)\left (\frac{1}{\mu(\mathbf{d})}+\left | \frac{p}{d_{1} \cdots d_{n}}-c \right | \right )m!\sum_{s=0}^m \left ( \frac{p}{d_{1} \cdots d_{n}} \right )^s.
\end{align}

\end{proof}

As of the last ingredient to reach the main conclusion, let us present an estimate of $(\mathbb{E}\otimes\text{tr})(a_{\epsilon(1)}\cdots a_{\epsilon(m)})$ for ({\bf Case D}) in the following lemma.

\begin{lemma}\label{freeness}
Let $\epsilon:[m]\rightarrow B_{\mathbf{d}}$ be a function and suppose that $\epsilon(i)\neq \epsilon(i+1)$ for all $i\in [m-1]$. Then we have
\begin{align}
    &\left | (\mathbb{E} \otimes \text{tr}) (a_{\epsilon(1)} \cdots a_{\epsilon(m)}) \right |\\
    &\leq 2^{m} m! \left (\frac{1}{\mu(\mathbf{d})}+ \left | \frac{p}{d_{1} \cdots d_{n}}-c\right | \right ) (m+1)! \left ( \frac{p}{d_{1} \cdots d_{n}}+c+1\right )^{3m}.
\end{align}

\end{lemma}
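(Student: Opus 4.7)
The plan is to expand $a_{\epsilon(t)} = W^{\epsilon(t)} - c \cdot \text{Id}$ and apply Theorem \ref{thm302} to obtain
\[
(\mathbb{E} \otimes \text{tr})(a_{\epsilon(1)} \cdots a_{\epsilon(m)}) = \sum_{E \subseteq [m]} (-c)^{m-|E|} \sum_{\sigma \in S_{|E|}} \rho^{\sharp(\sigma)} D(\epsilon|_E, \sigma),
\]
where $\rho = p/(d_1 \cdots d_n)$. Following the strategy of Lemma \ref{singleton} and Lemma \ref{double}, I split this double sum into a principal part $P$ consisting of pairs $(E,\sigma)$ with $D(\epsilon|_E,\sigma)=1$, and a remainder part $R$ with $D(\epsilon|_E,\sigma)\leq \mu(\mathbf{d})^{-1}$. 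By Lemma \ref{lem400}, the condition $D=1$ identifies $\sigma$ with the unique non-crossing permutation associated to a non-crossing partition $\pi \leq \ker(\epsilon|_E)$ of $E$.

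The remainder is controlled directly: $D\leq \mu(\mathbf{d})^{-1}$, $|S_{|E|}|\leq m!$, and the crude estimates $\rho^{\sharp(\sigma)}\leq(\rho+c+1)^{m}$, $\sum_{E} c^{m-|E|}\leq 2^{m}(\rho+c+1)^{m}$ yield a $\mu(\mathbf{d})^{-1}$-scaled bound of the required form. For the principal part, I would substitute $\rho$ by $c$ and denote the substituted expression by $P'$; the substitution error $|P-P'|$ is controlled by $|\rho^{k}-c^{k}|\leq k(\rho+c)^{k-1}|\rho-c|$ with $k=\sharp(\sigma)\leq m$, which, summed over the at most $2^{m}\cdot m!$ admissible pairs, contributes an upper bound of the order $|\rho-c|\cdot 2^{m}(m+1)!(\rho+c+1)^{2m}$.

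The decisive step is to show $P'=0$. Grouping terms of $P'$ by the NC partition $\tau$ of $[m]$ obtained from $\pi$ by appending singletons at every position in $[m]\setminus E$, each $\tau\in NC(m)$ with $\tau\leq \ker(\epsilon)$ and exactly $s$ singleton blocks receives $2^{s}$ contributions that collapse via the identity $\sum_{j=0}^{s}\binom{s}{j}(-1)^{s-j}=0$ (for $s\geq 1$) or $=1$ (for $s=0$), yielding
\[
P' = \sum_{\substack{\tau \in NC(m),\ \tau \leq \ker(\epsilon) \\ \text{every block of } \tau \text{ has size } \geq 2}} c^{\sharp(\tau)}.
\]
The hypothesis $\epsilon(i)\neq\epsilon(i+1)$ now forces every such $\tau$-summand to be absent, because any non-crossing partition of $[m]$ all of whose blocks have size at least two must contain two consecutive indices in a common block. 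A short induction on $m$ proves this: in the block containing $1$, let $j$ be the smallest element larger than $1$; if $j=2$ one obtains the adjacent pair $\{1,2\}$, otherwise the NC condition forces the interval $[2,j-1]$ to be itself a union of blocks all of size $\geq 2$, and induction applies. Hence $P'=0$, and a triangle inequality combining the bounds on $R$ and $P-P'$ produces the claimed estimate.

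The main obstacle will be the inclusion-exclusion bookkeeping that identifies $P'$ as a sum over NC partitions with no singleton blocks, together with the ``adjacent pair'' lemma for such partitions; once these combinatorial facts are in place, the remaining analytic bounds reduce to straightforward triangle-inequality estimates using $|S_{l}|\leq m!$ and the elementary inequality for $|\rho^{k}-c^{k}|$.
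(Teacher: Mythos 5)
Your proposal is correct, and it departs from the paper's proof at the decisive step. The paper expands $a_{\epsilon(t)}=W^{\epsilon(t)}-c\cdot\mathrm{Id}$ exactly as you do, but it disposes of the principal contribution in one stroke: since the product is alternating ($\epsilon(i)\neq\epsilon(i+1)$) and each factor is centered, the asymptotic freeness already established in Theorem \ref{thm500} gives $\lim (\mathbb{E}\otimes\mathrm{tr})(a_{\epsilon(1)}\cdots a_{\epsilon(m)})=0$, and the whole estimate is then a termwise comparison, writing $\rho=p/(d_1\cdots d_n)$, of the form $\left|\rho^{\sharp(\sigma)}D(\epsilon|_E,\sigma)-c^{\sharp(\sigma)}\lim D(\epsilon|_E,\sigma)\right|\le \left|\rho^{\sharp(\sigma)}-c^{\sharp(\sigma)}\right|D(\epsilon|_E,\sigma)+c^{\sharp(\sigma)}\left|D(\epsilon|_E,\sigma)-\lim D(\epsilon|_E,\sigma)\right|$, controlled by the same binomial-theorem bound and the same $\mu(\mathbf{d})^{-1}$ bound you use. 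You instead prove the vanishing of your substituted principal part $P'$ from scratch, via the inclusion--exclusion collapse over singleton blocks and the fact that a non-crossing partition of $[m]$ with all blocks of size at least two must contain an adjacent pair; that argument is correct, and it is robust to the question of how many permutations $\sigma$ realize a given non-crossing partition with $D=1$, since any such multiplicity depends only on the block sizes and is unchanged by appending singletons, so the alternating sum over subsets of singletons still telescopes to zero. In effect you re-derive, in this special alternating case, the vanishing of mixed quantities that the paper imports wholesale from Theorem \ref{thm500}: the paper's route buys brevity, while yours buys self-containedness, since Lemma \ref{freeness} would then not depend logically on the freeness theorem. Your quantitative estimates ($|R|\le 2^m m!\,\mu(\mathbf{d})^{-1}(\rho+c+1)^{2m}$ and $|P-P'|\le 2^m (m+1)!\,|\rho-c|\,(\rho+c+1)^{2m}$) sum to a quantity dominated by the stated right-hand side, so the bound also checks out.
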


\begin{proof}
    Recall that
    \begin{align}
        (\mathbb{E} \otimes \text{tr}) (a_{\epsilon(1)} \cdots a_{\epsilon(m)})&=\sum_{E \subseteq [m]} (-c)^{m-|E|} ( \mathbb{E} \otimes \text{tr}) \left (\prod_{t\in E}W^{\epsilon(t)} \right )\\
        =&\sum_{E \subseteq [m]} (-c)^{m-|E|} \sum_{\sigma\in S_{l}} \left ( \frac{p}{d_{1} \cdots d_{n}}\right )^{\sharp(\sigma)} {D(\epsilon|_{E},\sigma)}
    \end{align}
where $l=|E|$, and 
\begin{align}
    0=&\lim_{d_{1}', \cdots, d_{n}' \rightarrow \infty}(\mathbb{E} \otimes \text{tr}) (a_{\epsilon(1)} \cdots a_{\epsilon(m)})\\
    =& \sum_{E\subseteq [m]} (-c)^{m-|E|} \sum_{\sigma\in S_l} c^{\sharp(\sigma)}\lim_{d_{1}', \cdots, d_{n}' \rightarrow \infty}\prod_{j=1}^{n}(d_{j}')^{f_{j}(\epsilon|_{E},\sigma)}
\end{align}
by the asymptotic freeness of $\left\{W^{\sigma}\right\}_{\sigma\in \left\{0,1\right\}^n}$ (Theorem \ref{thm500}). Thus, \footnotesize
\begin{align}
    &\left | (\mathbb{E} \otimes \text{tr}) (a_{\epsilon(1)} \cdots a_{\epsilon(m)}) \right |=\left | (\mathbb{E} \otimes \text{tr}) (a_{\epsilon(1)} \cdots a_{\epsilon(m)})-0 \right |\\
    &\leq \sum_{E \subseteq [m]} c^{m-l} \sum_{\sigma\in S_{l}} \left | \left ( \frac{p}{d_{1} \cdots d_{n}}\right )^{\sharp(\sigma)} {D(\epsilon|_{E},\sigma)} - c^{\sharp(\sigma)}\lim_{d_{1}', \cdots d_{n}' \rightarrow \infty}\prod_{j=1}^{n}(d_{j}')^{f_{j}(\epsilon|_{E},\sigma)} \right |
\end{align}
\normalsize and the standard triangle inequality tells us
\begin{align}
    &\left | \left ( \frac{p}{d_{1} \cdots d_{n}}\right )^{\sharp(\sigma)} {D(\epsilon|_{E},\sigma)} - c^{\sharp(\sigma)}\lim_{d_{1}', \cdots d_{n}' \rightarrow \infty}\prod_{j=1}^{n}(d_{j}')^{f_{j}(\epsilon|_{E},\sigma)} \right |\\
   & \leq \left | \left ( \frac{p}{d_{1} \cdots d_{n}}\right )^{\sharp(\sigma)} - c^{\sharp(\sigma)}\right | {D(\epsilon|_{E},\sigma)}\\
    &{\color{white}tttttt}+ c^{\sharp(\sigma)} \left |  {D(\epsilon|_{E},\sigma)} - \lim_{d_{1}', \cdots d_{n}' \rightarrow \infty}\prod_{j=1}^{n}(d_{j}')^{f_{j}(\epsilon|_{E},\sigma)} \right |.
\end{align}

Furthermore, the binomial theorem implies
\begin{align}
    \left | \left ( \frac{p}{d_{1} \cdots d_{n}}\right )^{\sharp(\sigma)} - c^{\sharp(\sigma)}\right |\leq \left | \frac{p}{d_{1} \cdots d_{n}}-c\right | (m+1)! \left ( \frac{p}{d_{1} \cdots d_{n}}+c+1\right)^{2m}
\end{align}
and it is immediate to see that
\begin{align}
    \left |  {D(\epsilon|_{E},\sigma)} - \lim_{d_{1}', \cdots d_{n}' \rightarrow \infty}\prod_{j=1}^{n}(d_{j}')^{f_{j}(\epsilon|_{E},\sigma)} \right |\leq \frac{1}{\mu(\mathbf{d})}
\end{align}
for all permutations $\sigma\in S_l$. Thus we can conclude that
    \begin{align}
        &\left | \left ( \frac{p}{d_{1} \cdots d_{n}}\right )^{\sharp(\sigma)} {D(\epsilon|_{E},\sigma)} - c^{\sharp(\sigma)}\lim_{d_{1}', \cdots d_{n}' \rightarrow \infty}\prod_{j=1}^{n}(d_{j}')^{f_{j}(\epsilon|_{E},\sigma)} \right |\\
        \notag &\leq  c^{\sharp(\sigma)}\left |  {D(\epsilon|_{E},\sigma)} - \lim_{d_{1}', \cdots d_{n}' \rightarrow \infty}\prod_{j=1}^{n}(d_{j}')^{f_{j}(\epsilon|_{E},\sigma)} \right |\\
        &{\color{white}tttttttttttttttt}+\left | \left ( \frac{p}{d_{1} \cdots d_{n}}\right )^{\sharp(\sigma)} - c^{\sharp(\sigma)}\right | {D(\epsilon|_{E},\sigma)}\\
        &\le c^{\sharp (\sigma)}\cdot \frac{1}{\mu(\mathbf{d})} +   \left | \frac{p}{d_{1} \cdots d_{n}}-c\right | (m+1)! \left ( \frac{p}{d_{1} \cdots d_{n}}+c+1\right)^{2m}\\
        &\leq \left (\frac{1}{\mu(\mathbf{d})}+ \left | \frac{p}{d_{1} \cdots d_{n}}-c\right | \right ) (m+1)! \left ( \frac{p}{d_{1} \cdots d_{n}}+c+1\right )^{2m},
    \end{align}
and this implies the desired conclusion
\begin{align}
    &\left | (\mathbb{E} \otimes \text{tr}) (a_{\epsilon(1)} \cdots a_{\epsilon(m)}) \right |\\
    &\leq \sum_{E\subseteq [m]}c^{m-l} \sum_{\sigma\in S_l} \left (\frac{1}{\mu(\mathbf{d})}+ \left | \frac{p}{d_{1} \cdots d_{n}}-c\right | \right ) (m+1)! \left ( \frac{p}{d_{1} \cdots d_{n}}+c+1\right )^{2m}\\
    &\leq 2^{m} m! \left (\frac{1}{\mu(\mathbf{d})}+ \left | \frac{p}{d_{1} \cdots d_{n}}-c\right | \right ) (m+1)! \left ( \frac{p}{d_{1} \cdots d_{n}}+c+1\right )^{3m}.
\end{align}
\end{proof}

Finally, we are ready to establish a central limit theorem for partial transposes by applying Lemma \ref{singleton}, Lemma \ref{double}, and Lemma \ref{freeness}.

\begin{theorem}\label{pCLT}
    Let $p=p(\mathbf{d})$ and $n=n(\mathbf{d})$ be $\n$-valued functions of $\mathbf{d}=(d_{1},\cdots,d_{n})$, and consider a sequence of subsets $B_{\mathbf{d}}\subseteq \left \{0,1\right\}^{n(\mathbf{d})}$. If 
    \begin{equation}
        \lim |B_{\bf d}|^m \left ( \frac{1}{\mu(\bf d)}+\left | \frac{p}{d_1d_2\cdots d_n}-c\right | \right )=0    
    \end{equation}
    for all natural numbers $m$ and $\lim |B_{\bf d}|=\infty$, then the following random matrices
    \begin{align}
        s_{\mathbf{d}}=\frac{1}{\sqrt{|B_{\mathbf{d}}|}}\sum_{x\in B_{\mathbf{d}}}\left ( W^x_{d_{1} \cdots d_{n},p}-c\cdot \text{Id}_{d_{1} \cdots d_{n}} \right )
    \end{align}
    converge in moments to the semicircular element of the mean $0$ and the variance $c$, i.e. we have
    \begin{align}
        \lim (\mathbb{E}\otimes \text{tr})(s_{\mathbf{d}}^{m})=\int_{[-2c,2c]}\frac{t^m}{2\pi c^2}\sqrt{4c^2-t^2}dt.
    \end{align}

\end{theorem}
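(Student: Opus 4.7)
The plan is to expand
\begin{align}
(\mathbb{E}\otimes \text{tr})(s_{\mathbf{d}}^{m}) = \frac{1}{|B_\mathbf{d}|^{m/2}} \sum_{\epsilon : [m] \to B_\mathbf{d}} (\mathbb{E}\otimes \text{tr})(a_{\epsilon(1)} \cdots a_{\epsilon(m)}),
\end{align}
group the sum according to $\ker(\epsilon)$, and argue that the only surviving contributions in the limit come from non-crossing pair partitions, each contributing $c^{m/2}$. Write $\eta_{\mathbf{d}} := \frac{1}{\mu(\mathbf{d})} + \left|\frac{p}{d_{1}\cdots d_{n}} - c\right|$, so the hypothesis becomes $|B_\mathbf{d}|^{m}\eta_\mathbf{d} \to 0$ for every $m$. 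Lemma \ref{welldef} tells us that $(\mathbb{E}\otimes \text{tr})(a_{\epsilon(1)}\cdots a_{\epsilon(m)})$ depends only on $\ker(\epsilon)$ modulo an error of order $\eta_\mathbf{d}$, so I can fix one representative per kernel class and multiply by the Pochhammer-type count $|B_\mathbf{d}|(|B_\mathbf{d}|-1)\cdots(|B_\mathbf{d}|-k+1) \sim |B_\mathbf{d}|^{k}$ where $k = |\ker(\epsilon)|$.

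I then follow the four-case split from the paper. For \textbf{Case A}, Lemma \ref{singleton} gives a per-tuple bound $O(\eta_\mathbf{d})$, so after dividing by $|B_\mathbf{d}|^{m/2}$ the total contribution is $O(|B_\mathbf{d}|^{m/2}\eta_\mathbf{d}) \to 0$ by hypothesis. For \textbf{Case B}, having no singletons and at least one block of size $\ge 3$ forces $|\ker(\epsilon)| \le (m-1)/2$; combining Theorem \ref{thm302} with the bound $\prod_{j} d_{j}^{f_{j}(\epsilon,\sigma)} \le 1$ from Lemma \ref{lem400} produces a uniform estimate $|(\mathbb{E}\otimes \text{tr})(a_{\epsilon(1)}\cdots a_{\epsilon(m)})| \le C_m$, so the total contribution is $O(|B_\mathbf{d}|^{(m-1)/2}/|B_\mathbf{d}|^{m/2}) = O(|B_\mathbf{d}|^{-1/2}) \to 0$. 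For \textbf{Case D}, Lemma \ref{freeness} directly gives $O(\eta_\mathbf{d})$ per tuple and at most $O(|B_\mathbf{d}|^{m/2})$ tuples, yielding a contribution of $O(\eta_\mathbf{d}) \to 0$.

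The main content is \textbf{Case C}, where I would iterate Lemma \ref{double}: whenever $\{i,i+1\}\in \ker(\epsilon)$, the lemma replaces the length-$m$ moment by $c$ times the length-$(m-2)$ moment with indices $i, i+1$ removed, with additive error $O(\eta_\mathbf{d})$, and the reduced kernel is the pair partition $\ker(\epsilon)\setminus\{i,i+1\}$ on $[m-2]$. If $\ker(\epsilon)$ is non-crossing, non-crossingness is preserved and an adjacent pair always remains, so $m/2$ iterations give
\begin{align}
(\mathbb{E}\otimes\text{tr})(a_{\epsilon(1)}\cdots a_{\epsilon(m)}) = c^{m/2} + O(\eta_\mathbf{d}).
\end{align}
If instead $\ker(\epsilon)$ is a crossing pair partition, the iteration eventually reaches a pair partition with no adjacent pair, at which point Lemma \ref{freeness} shows the contribution is $O(\eta_\mathbf{d})$. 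Writing $NC_{2}(m)$ for the set of non-crossing pair partitions of $[m]$ and using $|NC_{2}(m)| = C_{m/2}$ (the $(m/2)$-th Catalan number for $m$ even, and $\emptyset$ for $m$ odd), summing over $\pi \in NC_{2}(m)$ with the count $\sim |B_\mathbf{d}|^{m/2}$ per $\pi$ gives
\begin{align}
\lim_{\mathbf{d}} (\mathbb{E}\otimes \text{tr})(s_{\mathbf{d}}^{m}) = C_{m/2}\, c^{m/2}
\end{align}
for $m$ even and $0$ for $m$ odd, matching the moments of the semicircular of mean $0$ and variance $c$.

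The principal obstacle will be the accumulation of errors in the Case C induction and coordinating them against the combinatorial count: each of the $m/2$ applications of Lemma \ref{double} costs $O(\eta_\mathbf{d})$, and this error must be multiplied by the $\sim |B_\mathbf{d}|^{m/2}$ tuples before dividing by $|B_\mathbf{d}|^{m/2}$. The hypothesis $\lim |B_\mathbf{d}|^{m}\eta_\mathbf{d} = 0$ for every $m$ is tailored precisely so that $|B_\mathbf{d}|^{m/2}\eta_\mathbf{d} \to 0$, which controls all the errors from Cases A, C, D simultaneously; the Case B bound is purely combinatorial and only needs that $p/(d_{1}\cdots d_{n})$ is eventually bounded, which follows from $\frac{p}{d_{1}\cdots d_{n}} \to c$.
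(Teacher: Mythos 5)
Your proposal is correct and follows essentially the same route as the paper's own proof: the same expansion over $\ker(\epsilon)$, the same reduction to representatives via Lemma \ref{welldef}, and the same four-case analysis using Lemmas \ref{singleton}, \ref{double}, and \ref{freeness}, with the Case C induction and the error bookkeeping against $|B_{\mathbf{d}}|^{m}\eta_{\mathbf{d}}\to 0$ handled just as in the paper. Your explicit boundedness argument for Case B (via Theorem \ref{thm302} and Lemma \ref{lem400}) is a small but welcome clarification of a step the paper passes over quickly.
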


\begin{proof}

    It is enough to prove that
    \begin{align}
        \lim (\mathbb{E}\otimes \text{tr})(s_{\mathbf{d}}^{m})=c^{\frac{m}{2}}  |NC_{2}(m)|
    \end{align}
    where $NC_2(m)$ is the set of all non-crossing pairings on $[m]$. For any $m$ and $\mathbf{d}$, we have
    \begin{align}
        (\mathbb{E}\otimes \text{tr})(s_{\mathbf{d}}^{m})&= \frac{1}{\sqrt{|B_{\mathbf{d}}|^{m}}} \sum_{x:[m]\rightarrow B_{\mathbf{d}}} (\mathbb{E} \otimes \text{tr}) (a_{\mathbf{d},x(1)} \cdots a_{\mathbf{d},x(m)}) \\
        &= \frac{1}{\sqrt{|B_{\mathbf{d}}|^{m}}} \sum_{\pi \in P(m)} \underset{\ker(x)=\pi}{\sum_{x:[m]\rightarrow B_{\mathbf{d}} :}} (\mathbb{E} \otimes \text{tr}) (a_{\mathbf{d},x(1)} \cdots a_{\mathbf{d},x(m)}).
    \end{align}
    Now let us take a representative function $x_{\mathbf{d},\pi}:[m]\rightarrow B_\mathbf{d}$ satisfying $\text{ker}(x_{\mathbf{d},\pi})=\pi$ for each $\mathbf{d}=(d_{1}, \cdots, d_{n})$ and $\pi\in P(m)$. Then we have \small
    \begin{align}
        &\left| \sum_{\substack{x:[m]\rightarrow B_{\mathbf{d}}\\\text{ker}(x)=\pi }}(\mathbb{E} \otimes \text{tr}) \left( \prod_{i=1}^{m} a_{\mathbf{d},x(i)}\right) - k_{\mathbf{d},\pi}\cdot  (\mathbb{E} \otimes \text{tr}) \left( \prod_{i=1}^{m} a_{\mathbf{d},x_{\mathbf{d},\pi}(i)} \right)\right|\\
        &\leq  \frac{2^{m+1} k_{\mathbf{d},\pi} m!  (1+c)^{m}}{\mu(\mathbf{d})}  \sum_{s=0}^{m} \left ( \frac{p}{d_{1} \cdots d_{n}} \right )^{s}\\
        &\leq \frac{2^{m+1} |B_{\mathbf{d}}|^{m} m!  (1+c)^{m}}{\mu(\mathbf{d})}  \sum_{s=0}^{m} \left ( \frac{p}{d_{1} \cdots d_{n}} \right )^{s}
    \end{align}
    \normalsize by Lemma \ref{welldef}, where $k_{\mathbf{d},\pi}=|B_{\mathbf{d}}|\cdot (|B_{\mathbf{d}}|-1)\cdots (|B_{\mathbf{d}}|-\sharp(\pi)+1)$. Thus, the given condition $\displaystyle |B_{\mathbf{d}}|^m=o(\mu(\mathbf{d}))$ implies
    \begin{align}
        &\lim (\mathbb{E} \otimes \text{tr}) (s_{\mathbf{d}}^{m})\\
        &= \sum_{\pi\in P(m)} \lim |B_{\mathbf{d}}|^{-\frac{m}{2}} k_{\mathbf{d},\pi}(\mathbb{E} \otimes \text{tr}) (a_{\mathbf{d},x_{\mathbf{d},\pi}(1)} \cdots a_{\mathbf{d},x_{\mathbf{d},\pi}(m)}).
    \end{align}

    As the first step, let us prove that
    \begin{equation}
        \lim |B_{\mathbf{d}}|^{-\frac{m}{2}} k_{\mathbf{d},\pi}(\mathbb{E} \otimes \text{tr}) (a_{\mathbf{d},x_{\mathbf{d},\pi}(1)} \cdots a_{\mathbf{d},x_{\mathbf{d},\pi}(m)})=0
    \end{equation}
    for the following situation
    \begin{center}
        ({\textbf{Case A}}) the partition $\pi$ contains a singleton block.
    \end{center} 
    Indeed, Lemma \ref{singleton} provides the following estimate
    \begin{align}
        &|B_{\mathbf{d}}|^{-\frac{m}{2}} k_{\mathbf{d},\pi}\left | (\mathbb{E} \otimes \text{tr}) (a_{\mathbf{d},x_{\mathbf{d},\pi}(1)} \cdots a_{\mathbf{d},x_{\mathbf{d},\pi}(m)}) \right |\\
        & \leq 2^{m}|B_{\mathbf{d}}|^{\frac{m}{2}}(1+c)^{m}m!\left (\frac{1}{\mu(\mathbf{d})}+\left | \frac{p}{d_{1} \cdots d_{n}}-c \right | \right )\sum_{s=0}^m \left ( \frac{p}{d_{1} \cdots d_{n}} \right )^s,
    \end{align}
    and the given conditions imply
    \begin{align}
        &\lim |B_{\mathbf{d}}|^{\frac{m}{2}}\left (\frac{1}{\mu(\mathbf{d})}+\left | \frac{p}{d_{1} \cdots d_{n}}-c \right | \right )\\
        &\leq \lim |B_{\mathbf{d}}|^{m}\left (\frac{1}{\mu(\mathbf{d})}+\left | \frac{p}{d_{1} \cdots d_{n}}-c \right | \right )=0.
    \end{align}
    
    From now on, it is enough to suppose that the permutation $\pi$ does not contain a singleton set, implying $\sharp(\pi) \le \frac{m}{2}$. Furthermore, if we suppose that $\pi$ is in ({\bf Case B}) i.e. $\sharp(\pi)<\frac{m}{2}$, then it is straightforward to see that
    \begin{align}
        &\lim |B_{\mathbf{d}}|^{-\frac{m}{2}} k_{\mathbf{d},\pi}(\mathbb{E} \otimes \text{tr}) (a_{\mathbf{d},x_{\mathbf{d},\pi}(1)} \cdots a_{\mathbf{d},x_{\mathbf{d},\pi}(m)})\\
        & \leq \lim |B_{\mathbf{d}}|^{\sharp(\pi)-\frac{m}{2}} (\mathbb{E} \otimes \text{tr}) (a_{\mathbf{d},x_{\mathbf{d},\pi}(1)} \cdots a_{\mathbf{d},x_{\mathbf{d},\pi}(m)})\\
        &=0 \cdot \lim (\mathbb{E} \otimes \text{tr}) (a_{\mathbf{d},x_{\mathbf{d},\pi}(1)} \cdots a_{\mathbf{d},x_{\mathbf{d},\pi}(m)}) = 0.
    \end{align}

    Thus, let us focus on the cases where $\pi$ is a pairing, i.e. all disjoint blocks of $\pi$ are given by cycles of length $2$. In this case, we have $\sharp(\pi) = \frac{m}{2}$ and the representative function $x_{\mathbf{d},\pi}:[m]\rightarrow B_{\mathbf{d}}$ should be one of the following two cases:
    \begin{itemize}
        \item ({\textbf{Case C}}) $\text{ker}(x_{\mathbf{d},\pi})$ is a pairing, and there exists $i\in [m-1]$ such that $\left\{i,i+1\right\}\in \text{ker}(x_{\mathbf{d},\pi})$
        \item ({\textbf{Case D}}) $\text{ker}(x_{\mathbf{d},\pi})$ is a pairing, and $x_{\mathbf{d},\pi}(i)\neq x_{\mathbf{d},\pi}(i+1)$ for all $i\in [m-1]$.
    \end{itemize}

    If $\pi$ is in ({\textbf{Case D}}), i.e. $\text{ker}(x_{\mathbf{d},\pi})$ is a pairing satisfying
    \begin{equation}
        x_{\mathbf{d},\pi}(i)\neq x_{\mathbf{d},\pi}(i+1)
    \end{equation}
    for all $i\in [m-1]$, then we have \small
    \begin{align}
        &\lim |B_{\mathbf{d}}|^{-\frac{m}{2}}k_{\mathbf{d},\pi}\left| (\mathbb{E} \otimes \text{tr}) \left( \prod_{i=1}^{m} a_{\mathbf{d},x_{\mathbf{d},\pi}(i)} \right) \right|\\
        &\leq \lim 2^{m} m! |B_{\mathbf{d}}|^{\frac{m}{2}}  \left (\frac{1}{\mu(\mathbf{d})}+ \left | \frac{p}{d_{1} \cdots d_{n}}-c\right | \right ) (m+1)! \left ( \frac{p}{d_{1} \cdots d_{n}}+c+1\right )^{3m}\\
        &\leq 2^{m} m!  (m+1)! \lim |B_{\mathbf{d}}|^{m}  \left (\frac{1}{\mu(\mathbf{d})}+ \left | \frac{p}{d_{1} \cdots d_{n}}-c\right | \right ) \left ( \frac{p}{d_{1} \cdots d_{n}}+c+1\right )^{3m}\\
        &=0
    \end{align}
     \normalsize by Lemma \ref{freeness}. Now, for the last situation ({\textbf{Case C}}), there exists $i_{0}\in [m-1]$ such that $x_{\mathbf{d},\pi}(i_0)=x_{\mathbf{d},\pi}(i_0+1)$ and Lemma \ref{double} implies
    \begin{align}
        &\lim |B_{\mathbf{d}}|^{-\frac{m}{2}}k_{\mathbf{d},\pi} (\mathbb{E} \otimes \text{tr}) \left( \prod_{i=1}^{m} a_{\mathbf{d},x_{\mathbf{d},\pi}(i)} \right)\\
        & = c\cdot \lim (\mathbb{E} \otimes \text{tr}) \left( \prod_{i \in [m] \backslash \{i_{0}, i_{0}+1 \}} a_{\mathbf{d},x_{\mathbf{d},\pi}(i)} \right).
    \end{align}
    Note that the restricted function $x_{\mathbf{d},\pi}|_{[m]\setminus \left\{i_0,i_0+1\right\}}$ defines a new pairing on $[m-2]$, which should be in one of ({\textbf{Case C}}) and ({\textbf{Case D}}). Thus, we can repeat the above arguments, leading us to conclude that 
    \begin{align}
        \lim |B_{\mathbf{d}}|^{-\frac{m}{2}}k_{\mathbf{d},\pi} (\mathbb{E} \otimes \text{tr}) \left( \prod_{i=1}^{m} a_{\mathbf{d},x_{\mathbf{d},\pi}(i)} \right)=
        \begin{cases}
            0, &\mbox{if } \pi \notin NC_{2}(m)\\
            c^{\frac{m}{2}}, &\mbox{if } \pi \in NC_{2}(m)
        \end{cases}.
    \end{align}
    Now, combining all the above discussions, we obtain
    \begin{align}
        &\lim (\mathbb{E} \otimes \text{tr}) (s_{\mathbf{d}}^{m}) \\
        &=\sum_{\pi \in P(m)} \lim |B_{\mathbf{d}}|^{-\frac{m}{2}}k_{\mathbf{d},\pi} (\mathbb{E} \otimes \text{tr}) \left( \prod_{i=1}^{m} a_{\mathbf{d},x_{\mathbf{d},\pi}(i)} \right)\\
        &= \sum_{\pi \in NC_{2}(m)} c^{\frac{m}{2}}= c^{\frac{m}{2}}|NC_{2}(m)|.
    \end{align}
\end{proof}

\emph{Acknowledgements}: The second author expresses gratitude to Professor James A. Mingo for the discussion at Queen's University, which was the starting point of this project. The authors were supported by Samsung Science and Technology Foundation under Project Number SSTF-BA2002-01 and by the National Research Foundation of Korea (NRF) grant funded by the Ministry of Science and ICT (MSIT) (No. 2020R1C1C1A01009681).

\appendix

\section{Proof of Theorem \ref{thm302}}

Let us begin with generalizing \cite[Lemma 3.2]{MP19} to the multipartite situation. More precisely, let us explain how to write the random variable 
\begin{equation}
    X_{\epsilon}=\text{tr}(W^{\epsilon_1}W^{\epsilon_2}\cdots W^{\epsilon_m}).
\end{equation}
as a polynomial of Gaussian variables for arbitrary $\z_2$-valued $m\times n$ matrices $\epsilon=(\epsilon_{ij})_{i\in [m],j\in [n]}$. Note that any $\epsilon=(\epsilon_{ij})_{i\in [m],j\in [n]}$ can be decomposed to $\epsilon'_{[n-1]}=(\epsilon_{ij})_{i\in [m],j\in [n-1]}$ and $\epsilon'_n=(\epsilon_{in})_{i\in [m]}$. Then $\epsilon'_{[n-1]}$ and $\epsilon'_n$ define the associated functions 
\begin{center}
    $\mathcal{E}_{[n-1]}:[n-1]\times [\pm m]\rightarrow [n-1]\times [\pm m]$ and $\mathcal{E}_n:[\pm m]\rightarrow [\pm m]$,
\end{center} 
where $\mathcal{E}_{[n-1]}$ is given by $ \mathcal{E}_{[n-1]}(j,x) = (j,\mathcal{E}_j(x))$.

\begin{notation}
We denote by $A(\epsilon)$ the set of all functions $\iota:[n-1]\times [\pm m]\rightarrow \cup_{j=1}^{n-1} [d_j]$ satisfying 
    \begin{enumerate}
        \item $\iota(j,\cdot)\in [d_j]$ for each $j\in [n-1]$, 
        \item $ \iota=\iota\circ\mathcal{E}_{[n-1]}\left (\text{id}_{n-1}\times \Gamma\Delta\Gamma^{-1}\right ) \mathcal{E}_{[n-1]}$,
    \end{enumerate} 
and by $B(\epsilon)$ the set of all functions $q:[\pm m]\rightarrow [d_{n}]$ satisfying
\begin{equation}
    q=q\circ\mathcal{E}_{n}\Gamma\Delta\Gamma^{-1}\mathcal{E}_{n}.
\end{equation} 
\end{notation}

Using the notations above, we explain how to express
\begin{equation}
    X_{\epsilon}=\text{tr}(W^{\epsilon_1}W^{\epsilon_2}\cdots W^{\epsilon_m}).
\end{equation}
as a polynomial of Gaussian variables in the following Lemma, which directly generalizes \cite[Lemma 3.2]{MP19} to the multipartite setting.

\begin{lemma}\label{lem200}
   Let $\epsilon=(\epsilon_{ij})_{i\in [m],j\in [n]}$ be a $\z_2$-valued $m\times n$ matrix with $\epsilon_i=(\epsilon_{ij})_{j=1}^{n}\in \left\{0,1\right\}^{n}$ for all $i\in [m]$, and let $X_{\epsilon}=\text{tr}(W^{\epsilon_1}W^{\epsilon_2}\cdots W^{\epsilon_m})$. Then we have
        \begin{align}
        \label{eq33}\left(d_{1} \cdots d_{n} \right)^{m+1} X_{\epsilon} =\sum_{\iota\in A(\epsilon)}\sum_{q\in B(\epsilon)} \sum_{t:[m]\rightarrow [p]} \prod_{y=1}^m g^{\iota_{y}}_{q(y),t(y)} \overline{g^{\iota_{-y}}_{q(-y),t(y)}}.
    \end{align}

\end{lemma}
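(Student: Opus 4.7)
The plan is to expand each $W^{\epsilon_k}$ entrywise via the explicit decomposition
$W = \tfrac{1}{d_{1}\cdots d_{n}} \sum_{\mathbf{i},\mathbf{j}} e_{\mathbf{i}_{1},\mathbf{j}_{1}}\otimes\cdots\otimes e_{\mathbf{i}_{n-1},\mathbf{j}_{n-1}}\otimes G_{\mathbf{i}}G_{\mathbf{j}}^{*}$
already given in the paper, then multiply, take the trace, and read off the constraints that emerge. First I would introduce, for each factor $k\in[m]$, a row multi-index $r^{(k)}=(r^{(k)}_{1},\ldots,r^{(k)}_{n})\in[d_{1}]\times\cdots\times[d_{n}]$, so that matrix multiplication together with the trace identifies the column of factor $k$ with $r^{(k+1)}$ (cyclically, with $r^{(m+1)}:=r^{(1)}$). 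Partial transposition in the $\ell$-th component swaps the $\ell$-th coordinate of row and column of the underlying $W$, so the ``effective'' $W$-row and $W$-column of factor $k$ become
\begin{equation*}
R^{(k)}_{\ell} = \begin{cases} r^{(k)}_{\ell}, & \epsilon_{k\ell}=0, \\ r^{(k+1)}_{\ell}, & \epsilon_{k\ell}=1, \end{cases} \qquad
C^{(k)}_{\ell} = \begin{cases} r^{(k+1)}_{\ell}, & \epsilon_{k\ell}=0, \\ r^{(k)}_{\ell}, & \epsilon_{k\ell}=1. \end{cases}
\end{equation*}

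Next I would encode these indices through the $\pm$-convention underlying $A(\epsilon)$ and $B(\epsilon)$: set $\iota(\ell,k):=R^{(k)}_{\ell}$ and $\iota(\ell,-k):=C^{(k)}_{\ell}$ for $\ell\in[n-1]$, and analogously $q(\pm k)$ on the $n$-th component. A direct inspection of the two cases above yields the uniform formulas $\iota_{\ell}(\mathcal{E}_{\ell}(k))=r^{(k)}_{\ell}$ and $\iota_{\ell}(\mathcal{E}_{\ell}(-k))=r^{(k+1)}_{\ell}$, valid for every $\ell\in[n-1]$ and $k\in[m]$. Since $\Gamma\Delta\Gamma^{-1}$ is precisely the pairing $\{(k+1,-k):k\in[m-1]\}\cup\{(1,-m)\}$ of $[\pm m]$, the consistency requirement that the two occurrences of each underlying $r^{(k)}_{\ell}$ agree translates exactly to $\iota_{\ell}\circ\mathcal{E}_{\ell}=\iota_{\ell}\circ\mathcal{E}_{\ell}\circ\Gamma\Delta\Gamma^{-1}$, i.e.\ $\iota_{\ell}=\iota_{\ell}\circ\mathcal{E}_{\ell}\,\Gamma\Delta\Gamma^{-1}\,\mathcal{E}_{\ell}$ (using that $\mathcal{E}_{\ell}$ is an involution), which is the defining condition of $A(\epsilon)$; the parallel argument on the $n$-th component yields $B(\epsilon)$. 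Consequently, the map $(r^{(k)})_{k=1}^{m}\mapsto(\iota,q)$ is a bijection between the raw index sequences and $A(\epsilon)\times B(\epsilon)$.

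Finally I would collect the Gaussian content of each factor. Since $(G_{\mathbf{i}}G_{\mathbf{j}}^{*})_{x,x'}=\sum_{t\in[p]}g^{(\mathbf{i})}_{x,t}\overline{g^{(\mathbf{j})}_{x',t}}$, factor $k$ contributes $g^{(R^{(k)}_{[n-1]})}_{R^{(k)}_{n},t(k)}\overline{g^{(C^{(k)}_{[n-1]})}_{C^{(k)}_{n},t(k)}}$ with a free summation variable $t(k)\in[p]$, where $R^{(k)}_{[n-1]}:=(R^{(k)}_{1},\ldots,R^{(k)}_{n-1})$ and similarly for $C^{(k)}_{[n-1]}$. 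In the $(\iota,q)$-notation this becomes precisely $g^{\iota_{k}}_{q(k),t(k)}\overline{g^{\iota_{-k}}_{q(-k),t(k)}}$. Multiplying over $k$, summing over $t:[m]\to[p]$, and gathering the $\tfrac{1}{d_{1}\cdots d_{n}}$ from each of the $m$ copies of $W$ together with the trace normalization $\tfrac{1}{d_{1}\cdots d_{n}}$ produces the overall factor $\tfrac{1}{(d_{1}\cdots d_{n})^{m+1}}$, and \eqref{eq33} follows upon clearing. The only real obstacle is the meticulous case-by-case verification that the cyclic/trace matchings and the sign-swaps induced by partial transposition combine into exactly the conditions defining $A(\epsilon)$ and $B(\epsilon)$, which amounts to the direct componentwise extension of the bipartite computation in \cite[Lemma 3.2]{MP19}.
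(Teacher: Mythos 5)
Your proposal is correct and follows essentially the same route as the paper's proof: a direct entrywise expansion of the product $W^{\epsilon_1}\cdots W^{\epsilon_m}$ generalizing \cite[Lemma 3.2]{MP19}, with the trace-matching and transposition sign-swaps repackaged into the defining conditions of $A(\epsilon)$ and $B(\epsilon)$. The only (cosmetic) difference is that you parametrize directly by the free row indices $r^{(k)}$ after the cyclic trace identification, whereas the paper sums over independent row/column pairs $\mathbf{k}_y$ per factor, reindexes via $\mathbf{k}\mapsto\mathbf{k}\circ\eta_i$, and then imposes the matching condition $K(-i,j)=K(\Gamma(i),j)$ as a constraint; the resulting bookkeeping and the final formula are identical.
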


\begin{proof}
   As the first step, in order to focus on an entrywise expression of 
    \begin{align}
       \notag &( d_{1} \cdots d_{n})W^{\epsilon_{i}}= \sum_{\mathbf{i},\mathbf{j} \in [d_{1} \cdots d_{n-1}]} 
       \left [ \bigotimes_{x=1}^{n-1} T^{\epsilon_{ix}}(e_{\mathbf{i}_{x},\mathbf{j}_{x}}) \right ] \otimes T^{\epsilon_{in}} (G_{\mathbf{i}}G_{\mathbf{j}}^{*}),
    \end{align}
let us introduce two functions ${\bf k}={\bf k}_{\bf i, j}$ and $\eta_i$ as follows.
\begin{itemize}
    \item Let us identify the following set
    \begin{align}
         F(n-1)&=[d_{n-1}]\times \cdots \times [d_1]\times [d_1]\times \cdots \times [d_{n-1}]\\
        &\cong [d_1d_2\cdots d_{n-1}]\times [d_1d_2\cdots d_{n-1}]
    \end{align}
    with the set of all functions ${\bf k}:[\pm (n-1)]\rightarrow \cup_{j=1}^{n-1} [d_j]$ satisfying that ${\bf k}(\pm t)\in [d_t]$ for all $t \in [n-1]$. More specifically, each pair $({\bf i},{\bf j})\in [d_1d_2\cdots d_{n-1}]\times [d_1d_2\cdots d_{n-1}]$ is associated with 
    \begin{align}
    {\bf k}&=({\bf k}(-(n-1)),\cdots,{\bf k}(-1),{\bf k}(1),\cdots,{\bf k}(n-1))\\
    &=({\bf j}_{n-1},\cdots,{\bf j}_{1},{\bf i}_1,\cdots,{\bf i}_{n-1}) \in F(n-1).
    \end{align}
    In this case, let us write ${\bf k}^+={\bf i}$ and ${\bf k}^-={\bf j}$ as functions from $[n-1]$ into $\cup_{j=1}^{n-1}[d_j]$.
\item For each $i\in [m]$, we define $\eta_{i}:[\pm (n-1)]\rightarrow [\pm (n-1)]$ by
\begin{equation}
    \eta_i(x) = (-1)^{\epsilon_{i|x|}}\cdot x.
\end{equation}
Then $\eta_{i}\circ\eta_{i}$ is the identity function on $[\pm (n-1)]$, and we have 
\begin{align}
     T^{\epsilon_{ij}}(e_{{\bf k}(j),{\bf k}(-j)})&=\left\{ \begin{array}{ll}
    e_{{\bf k}(j),{\bf k}(-j)}&\text{if }\epsilon_{ij}=0\\
    e_{{\bf k}(-j),{\bf k}(j)}&\text{if }\epsilon_{ij}=1
    \end{array} \right .\\
    \label{eq200}&= e_{({\bf k}\circ \eta_i)(j),({\bf k}\circ \eta_i)(-j)} ,
\end{align} 
where $T$ is the transpose operator.
\end{itemize}
Using the notations above, we obtain
\begin{align}
 & \notag ( d_{1} \cdots d_{n})W^{\epsilon_{i}}= \sum_{\mathbf{k}\in F(n-1)} \left [ \bigotimes_{x=1}^{n-1} T^{\epsilon_{ix}} \left ( e_{\mathbf{k}(x),\mathbf{k}(-x)}\right ) \right ]\otimes T^{\epsilon_{in}}\left ( G_{{\bf k}^+} G_{{\bf k}^{-}}^{*}\right )\\
   &=\sum_{\mathbf{k}\in F(n-1)} \left [ \bigotimes_{x=1}^{n-1} e_{(\mathbf{k}\circ\eta_{i})(x),(\mathbf{k}\circ\eta_{i})(-x)}  \right ] \otimes T^{\epsilon_{in}} \left ( G_{{\bf k}^+}G_{{\bf k}^{-}}^{*}\right )
\end{align}
by \eqref{eq200}. Furthermore, since ${\bf k}\mapsto {\bf k}\circ \eta_i$ is a bijective function on $F(n-1)$ and $\eta_i\circ \eta_i=\text{id}_{[\pm (n-1)]}$, we have
\begin{align}
( d_{1} \cdots d_{n})W^{\epsilon_{i}}=\sum_{\mathbf{k}\in F(n-1)} \left [ \bigotimes_{x=1}^{n-1} e_{\mathbf{k}(x),\mathbf{k}(-x)} \right ]\otimes T^{\epsilon_{in}} \left ( G_{({\bf k}\circ \eta_i)^+}G_{({\bf k}\circ \eta_i)^-}^{*}\right ).
\end{align}

Thus, the joint moment $\left( d_{1} \cdots d_{n+1} \right)^{m+1}X_{\epsilon}$ is written as \footnotesize
    \begin{align}\label{eq203}
 \sum_{\mathbf{k}_{1},\cdots,\mathbf{k}_{m}\in F(n-1)} \text{Tr}\left ( \bigotimes_{x=1}^{n-1}   \left [\prod_{y=1}^m e_{\mathbf{k}_{y}(x),\mathbf{k}_{y}(-x)}  \right ] \right )\cdot   \text{Tr}\left ( \prod_{y=1}^{m}T^{\epsilon_{yn}} (G_{(\mathbf{k}_{y}\circ\eta_{y})^+}G_{(\mathbf{k}_{y}\circ\eta_{y})^- }^{*})
       \right ),
    \end{align}
 \normalsize   and we have
    \begin{align}
        \label{eq201}&\text{Tr}\left ( \bigotimes_{x=1}^{n-1}   \left [\prod_{y=1}^m e_{\mathbf{k}_{y}(x),\mathbf{k}_{y}(-x)}  \right ] \right )=\prod_{x=1}^{n-1}\text{Tr}\left ( \prod_{y=1}^m e_{\mathbf{k}_{y}(x),\mathbf{k}_{y}(-x)} \right ).
    \end{align}

To deal with the multiple functions ${\bf k}_1$, ${\bf k}_2$, $\cdots$, ${\bf k}_m$ simultaneously, let us define $K_0:[m]\times [\pm (n-1)]\rightarrow \cup_{j=1}^{n-1}[d_j]$ by
\begin{equation}
    K_0(i,j)={\bf k}_i(j)\in [d_{|j|}],
\end{equation}
and its natural extension $K$ on $[\pm m]\times [\pm (n-1)]$ given by
\begin{equation}
    K(i,j)= {\bf k}_{|i|}(\text{sgn}(i)\cdot j)\in [d_{|j|}].
\end{equation}
Then it is straightforward to see that \eqref{eq201} is given by
\begin{align}
   &\prod_{x=1}^{n-1}\text{Tr}\left ( \prod_{y=1}^m e_{\mathbf{k}_{y}(x),\mathbf{k}_{y}(-x)} \right )\\
   &= \left\{\begin{array}{ll}
1&\text{if }K(-i,j)=K(\Gamma(i),j), i\in [m],j\in [n-1]\\
0&\text{otherwise}
    \end{array} \right .
\end{align}
where $\Gamma=(1,2,\cdots, m)\in S_m$. Now, let us define $F_0(m,n)$ as the set of $K_0=({\bf k}_1,{\bf k}_2,\cdots,{\bf k}_m)\in F(n-1)^m$ satisfying the condition 
\begin{equation}\label{eq30}
    K(-i,j)=K(\Gamma(i),j)\text{ for all }i\in [m]\text{ and }j\in [n-1].
\end{equation} 
Then the expression \eqref{eq203} is simplified to
   \begin{align}\label{eq203.5}
        & \sum_{(\mathbf{k}_{1},\cdots,\mathbf{k}_{m})\in F_0(m,n)} \text{Tr}\left ( \prod_{y=1}^{m}T^{\epsilon_{yn}} (G_{(\mathbf{k}_{y}\circ\eta_{y})^+}G_{(\mathbf{k}_{y}\circ\eta_{y})^- }^{*})
       \right ).
    \end{align}
 
On the other hand, any $({\bf k}_1,{\bf k}_2,\cdots,{\bf k}_m)\in F(n-1)^m$ is associated to a function $\iota: [n-1]\times [\pm m] \rightarrow  \cup_{j=1}^{n-1} [d_j]$ given by
\begin{equation}
    \iota(j,i)=K(i,\eta_{|i|}(j)) \in [d_j] .
\end{equation}
Indeed, the above condition \eqref{eq30} is equivalent to that $\iota\in A(\epsilon)$, i.e.
\begin{equation}
    \iota=\iota\circ\mathcal{E}_{[n-1]}\left (\text{id}_{n-1}\times \Gamma\Delta\Gamma^{-1}\right ) \mathcal{E}_{[n-1]}
\end{equation}
on $[n-1]\times [\pm m]$, and the restricted functions $\iota_y=\iota(\cdot, y)$ and $\iota_{-y}=\iota(\cdot, -y)$ satisfy
\begin{align}
&\iota_y(j)=K(y,\eta_y(j))={\bf k}_y(\eta_y(j))=({\bf k}_y\circ \eta_y)(j)\\
&\iota_{-y}(j)=K(-y,\eta_y(j))={\bf k}_y(-\eta_y(j))={\bf k}_y(\eta_y(-j))=({\bf k}_y\circ \eta_y)(-j)
\end{align}
for all $j\in [n-1]$ and $y\in [m]$. Thus, combining \eqref{eq203} and \eqref{eq203.5}, we have 
    \begin{align}
        \left( d_{1} \cdots d_{n} \right)^{m+1}X_{\epsilon} = \sum_{\iota\in A(\epsilon)} \text{Tr}\left ( \prod_{y=1}^m T^{\epsilon_{yn}} (G_{\iota_y}G_{\iota_{-y}}^{*})\right ).
    \end{align}
 Note that
    \begin{align}
        & T^{\epsilon_{yn}} \left ( G_{\iota_y} G_{\iota_{-y}}^{*}\right )\\
        &= T^{\epsilon_{yn}} \left(\sum_{r(y),r(-y)=1}^{d_{n}} \left [\sum_{t(y)=1}^{p} g^{\iota_y}_{r(y),t(y)} \overline{g^{\iota_{-y}}_{r(-y),t(y)}} \right ] e_{r(y),r(-y)}\right)\\
        &= \sum_{r(y),r(-y)=1}^{d_{n}} \left [ \sum_{t(y)=1}^{p} g^{\iota_y}_{r(y),t(y)} \overline{g^{\iota_{-y}}_{r(-y),t(y)}} \right ] e_{{(r\circ \mathcal{E}_{n})(y),(r\circ \mathcal{E}_{n})(-y)}}\\
        \label{eq31}&= \sum_{r(y),r(-y)=1}^{d_{n}} \left [ \sum_{t(y)=1}^{p} g^{\iota_y}_{(r\circ\mathcal{E}_{n})(y),t(y)} \overline{g^{\iota_{-y}}_{(r\circ\mathcal{E}_{n})(-y),t(y)}} \right ] e_{{r(y),r(-y)}}
    \end{align}
for any $y\in [m]$, and that the non-trivial terms of the trace of $\displaystyle \prod_{y=1}^m T^{\epsilon_{yn}}(G_{\iota_y}G_{\iota_{-y}}^*)$ arise only from the cases where we have
\begin{equation}\label{eq31.5}
    r(-1)=r(2),~r(-2)=r(3),~\cdots,r(-m)=r(1).
\end{equation}
Furthermore, \eqref{eq31.5} is also equivalent to that $q\in B(\epsilon)$, i.e. $q=r\circ\mathcal{E}_{n}:[\pm m]\rightarrow [d_{n}]$ satisfies 
\begin{equation}
    q=q\circ\mathcal{E}_{n}\Gamma\Delta\Gamma^{-1}\mathcal{E}_{n}.
\end{equation} 

Finally, combining all the discussions above, we obtain
    \begin{align}
        &\text{Tr}\left ( \prod_{y=1}^m T^{\epsilon_{yn}} (G_{\iota_y}G_{\iota_{-y}}^{*})\right )=\sum_{q\in B(\epsilon)}\sum_{t:[m]\rightarrow [p]} \prod_{y=1}^m g^{\iota_y}_{q(y),t(y)} \overline{g^{\iota_{-y}}_{q(-y),t(y)}},
    \end{align}
which leads us to the following conclusion
    \begin{align}
        &\left ( d_{1} \cdots d_{n} \right )^{m+1} X_{\epsilon} = \sum_{\iota\in A(\epsilon)}\sum_{q\in B(\epsilon)} \sum_{t:[m]\rightarrow [p]} \prod_{y=1}^m g^{\iota_y}_{q(y),t(y)} \overline{g^{\iota_{-y}}_{q(-y),t(y)}}.
    \end{align}

\end{proof}

A non-trivial fact from Lemma \ref{lem200} is that $X_{\epsilon}$ is a real-valued random variable and, moreover, the explicit expression \eqref{eq33} can be applied to compute the following $k$-th moments
\begin{equation}
   \mathbb{E}\left ( X_{\epsilon}^k \right )=\mathbb{E}\left ( \left [ \text{tr}(W^{\epsilon_1}W^{\epsilon_2}\cdots W^{\epsilon_m}) \right ]^k \right ).
\end{equation}

 Let $k$ be an arbitrary natural number and let $\epsilon=(\epsilon_{ij})_{i\in [m],j\in [n]}$ be a $\z_2$-valued $m\times n$ matrix with $\epsilon_i=(\epsilon_{ij})_{j\in [n]} \in \left\{0,1\right\}^n$. Let us apply the explicit formula \eqref{eq33} of 
 \begin{equation}
 d_1\cdots d_n X_{\epsilon}=\text{Tr}(W^{\epsilon_1}W^{\epsilon_2}\cdots W^{\epsilon_m})
 \end{equation}
to find a suitable expression of the $k$-th powers
   \begin{equation}\label{eq32}
        \left ( d_{1} \cdots d_{n} \right )^{(m+1)k} X_{\epsilon}^k=\left [ \sum_{\iota\in A(\epsilon)}\sum_{q\in B(\epsilon)} \sum_{t:[m]\rightarrow [p]} \prod_{y=1}^m g^{\iota_y}_{q(y),t(y)} \overline{g^{\iota_{-y}}_{q(-y),t(y)}} \right ]^k
    \end{equation}
where we need to consider the following multiple choices of functions:
\begin{equation}
    \iota^{(s)}\in A(\epsilon), ~q^{(s)}\in B(\epsilon), ~t^{(s)}:[m]\rightarrow [p]~ (1\leq s\leq k).
\end{equation}
To deal with all these functions simultaneously, let us introduce the following three multivariate functions:
\begin{itemize}
    \item ${\bf I}: [k]\times [\pm m]\rightarrow [d_1d_2\cdots d_{n-1}]$ given by
    \begin{equation}
        {\bf I}(s,s')=(I_1(s,s'),I_2(s,s'),\cdots,I_{n-1}(s,s')).
    \end{equation}
    and each $I_j:[k]\times [\pm m]\rightarrow [d_j]$ is given by
        \begin{equation}
        I_j(s,s')=\iota^{(s)}(j,s')\in [d_j].
        \end{equation}
    Then all $\iota^{(1)}$, $\iota^{(2)}$, $\cdots$, $\iota^{(k)}$ are in $A(\epsilon)$ if and only if 
    \begin{equation}
    I_{j}=I_{j}\circ \mathcal{E}^{(k)}_{j} \Gamma^{(k)} \Delta^{(k)} (\Gamma^{(k)})^{-1} \mathcal{E}^{(k)}_{j}    
    \end{equation}
    for all $j\in[n-1]$. We denote by $A(\epsilon,k)$ the set of such functions $\mathbf{I}$.
    \item ${\bf Q}:[k]\times [\pm m]\rightarrow [d_{n}]$ given by
    \begin{equation}
        {\bf Q}(s,s')=q^{(s)}(s').
    \end{equation}
    Then all $q^{(1)}$, $q^{(2)}$, $\cdots$, $q^{(k)}$ are in $B(\epsilon)$ if and only if
    \begin{equation}
        {\bf Q}={\bf Q}\circ \mathcal{E}^{(k)}_{n} \Gamma^{(k)} \Delta^{(k)} (\Gamma^{(k)})^{-1} \mathcal{E}^{(k)}_{n}.
    \end{equation}
 Let us denote by $B(\epsilon,k)$ the set of such functions ${\bf Q}$.
    
    \item ${\bf T}:[k]\times [m]\rightarrow [p]$ given by
    \begin{equation}
        {\bf T}(s,s')=t^{(s)}(s').
    \end{equation}
\end{itemize}
Note that, for each $(s,s')\in [k]\times [\pm m]$, the above ${\bf I}(s,s')$ can be considered a function from $[n-1]$ into $\cup_{j=1}^{n-1}[d_j]$ satisfying 
\begin{equation}
[{\bf I}(s,s')](j)=I_j(s,s')\in [d_j].    
\end{equation}
Then all our discussions are summarized into the following form:
\begin{align}
        \notag &\left(d_{1} \cdots d_{n} \right)^{(m+1)k} X_{\epsilon}^{k} \\
        \label{lemmaxk}&=\sum_{{\bf I}\in A(\epsilon,k)}\sum_{{\bf Q}\in B(\epsilon,k)}\sum_{{\bf T}:[k]\times [m]\rightarrow [p]}
        \prod_{s=1}^k \prod_{s'=1}^m g^{\mathbf{I}(s,s')}_{\mathbf{Q}(s,s'),\mathbf{T}(s,s')} \overline{g^{\mathbf{I}(s,-s')}_{\mathbf{Q}(s,-s'),\mathbf{T}(s,s')}}
\end{align}

Now, let us present a proof of Theorem \ref{thm302} using the above notations. Our arguments are analogous to the proof of \cite[Theorem 3.7]{MP19}.

\begin{proof}[Proof of Theorem \ref{thm302}]
    By \eqref{lemmaxk}, we have
    \begin{align}
        &\left(d_{1} \cdots d_{n} \right)^{(m+1)k} \mathbb{E}(X_{\epsilon}^{k}) \\
        &=\sum_{{\bf I}\in A(\epsilon,k)}\sum_{{\bf Q}\in B(\epsilon,k)}\sum_{{\bf T}:[k]\times [m]\rightarrow [p]} \mathbb{E}\left(\prod_{s=1}^k \prod_{s'=1}^m g^{\mathbf{I}(s,s')}_{\mathbf{Q}(s,s'),\mathbf{T}(s,s')} \overline{g^{\mathbf{I}(s,-s')}_{\mathbf{Q}(s,-s'),\mathbf{T}(s,s')}}\right).\label{exkfirst}
    \end{align}
    For any $\mathbf{I}\in A(\epsilon,k)$, $\mathbf{Q}\in B(\epsilon,k)$, and $\mathbf{T}:[k]\times[m]\rightarrow[p]$, let us write 
    \begin{align}
    &g_{\alpha(s,s')}=g^{\mathbf{I}(s,s')}_{\mathbf{Q}(s,s'),\mathbf{T}(s,s')}\\
    &g_{\beta(s,s')}=g^{\mathbf{I}(s,-s')}_{\mathbf{Q}(s,-s'),\mathbf{T}(s,s')}
    \end{align}
     for all $(s,s')\in [k]\times [m]\cong [km]$ to pursue simplicity.
    Then we have
    \begin{align}
        &\mathbb{E}\left(\prod_{s=1}^k \prod_{s'=1}^m g^{\mathbf{I}(s,s')}_{\mathbf{Q}(s,s'),\mathbf{T}(s,s')} \overline{g^{\mathbf{I}(s,-s')}_{\mathbf{Q}(s,-s'),\mathbf{T}(s,s')}}\right)\\
        \label{eg}&=\mathbb{E}\left(\prod_{x\in [k]\times [m]}g_{\alpha(x)}\cdot\prod_{y\in [k]\times [m]}\overline{g_{\beta(y)}}\right) = |\{\sigma\in S_{km}:\beta=\alpha\circ\sigma\}|,
    \end{align}
where the second equality comes from the Wick formula. Let us denote by $C(\sigma)$ the set of all triples $(\mathbf{I},\mathbf{Q},\mathbf{T})$ satisfying $\beta=\alpha\circ \sigma$ for $\sigma\in S_{km}$. Then we have
    \begin{align}
        \notag&\left(d_{1} \cdots d_{n} \right)^{(m+1)k} \mathbb{E}(X_{\epsilon}^{k})\\
        &=\sum_{\mathbf{I}, \mathbf{Q}, \mathbf{T}} \sum_{\sigma\in S_{km}:~\beta=\alpha\circ \sigma}1=\sum_{\sigma\in S_{km}}\sum_{(\mathbf{I}, \mathbf{Q}, \mathbf{T})\in C(\sigma)} 1=\sum_{\sigma\in S_{km}} |C(\sigma)|.
    \end{align}

    Using the natural identification $[\pm km]\cong [k]\times[\pm m]$, we can regard maps from $[k]\times[\pm m]$ as maps from $[\pm km]$.
    Also bijections on $[k]\times[\pm m]$ can be regarded as permutations on $[\pm km]$. Then $(\mathbf{I}, \mathbf{Q}, \mathbf{T})\in C(\sigma)$ if and only if the following conditions hold:
    \begin{itemize}
        \item [(A)] $I_{j}=I_{j}\circ\sigma\Delta^{(k)}\sigma^{-1}$ for all $j\in [n-1]$,
        \item [(B)] $\mathbf{Q}=\mathbf{Q}\circ\sigma\Delta^{(k)}\sigma^{-1}$.
        \item [(C)] $\mathbf{T}=\mathbf{T}\circ\sigma$.
    \end{itemize}

Since the conditions $\mathbf{I}\in A(\epsilon,k)$ and $\mathbf{Q}\in B(\epsilon,k)$ should be taken into account, $|C(\sigma)|$ is equal to the number of triples $(\mathbf{I}, \mathbf{Q}, \mathbf{T})$ consisting of general functions ${\bf I}:[k]\times [\pm m]\rightarrow [d_1d_2\cdots d_{n-1}]$, ${\bf Q}:[k]\times [\pm m]\rightarrow [d_n]$, ${\bf T}:[k]\times [m]\rightarrow [p]$ satisfying
\begin{itemize}
       \item [(A')] $I_{j} =I_{j}\circ\mathcal{E}^{(k)}_{j} \Gamma^{(k)} \Delta^{(k)} (\Gamma^{(k)})^{-1} \mathcal{E}^{(k)}_{j}=I_{j}\circ\sigma\Delta^{(k)}\sigma^{-1}$ for all $j\in [n-1]$,
        \item [(B')] $\mathbf{Q} =\mathbf{Q}\circ\mathcal{E}^{(k)}_{n} \Gamma^{(k)} \Delta^{(k)} (\Gamma^{(k)})^{-1} \mathcal{E}^{(k)}_{n}=\mathbf{Q}\circ\sigma\Delta^{(k)}\sigma^{-1}$.
        \item [(C')] $\mathbf{T}=\mathbf{T}\circ\sigma$.
\end{itemize}

    Thus the number of such triples $(\mathbf{I}, \mathbf{Q}, \mathbf{T})$ is given by
    \begin{align}
        \left[ \prod_{j=1}^{n} d_{j}^{\sharp(\mathcal{E}^{(k)}_{j} \Gamma^{(k)} \Delta^{(k)} (\Gamma^{(k)})^{-1} \mathcal{E}^{(k)}_{j} \lor \sigma\Delta^{(k)}\sigma^{-1})} \right] \cdot p^{\sharp(\sigma)},
    \end{align}
which leads us to the following identity
    \begin{align}
        &\left(d_{1} \cdots d_{n} \right)^{(m+1)k} \mathbb{E}(X_{\epsilon}^{k})\\
        &=\sum_{\sigma \in S_{km}} \left[ \prod_{j=1}^{n} d_{j}^{\sharp(\mathcal{E}^{(k)}_{j} \Gamma^{(k)} \Delta^{(k)} (\Gamma^{(k)})^{-1} \mathcal{E}^{(k)}_{j} \lor \sigma\Delta^{(k)}\sigma^{-1})} \right] \cdot p^{\sharp(\sigma)}.
    \end{align}
This implies the following desired conclusion
    \begin{align}
        &\mathbb{E}(X_{\epsilon}^{k})=\sum_{\sigma \in S_{km}} \left( \frac{p}{d_{1} \cdots d_{n}} \right)^{\sharp(\sigma)} \prod_{j=1}^{n} d_{j}^{f_{k,j}(\epsilon,\sigma)}.
    \end{align}
\end{proof}

\bibliographystyle{alpha}
\bibliography{bib}

\begin{thebibliography}{CMHW19}

\bibitem[Bra96]{Br96}
Samuel~L. Braunstein.
\newblock Geometry of quantum inference.
\newblock {\em Phys. Lett. A}, 219(3-4):169--174, 1996.

\bibitem[Cho82]{Ch80}
M.-D. Choi.
\newblock Positive linear maps, {O}perator {A}lgebras and {A}pplications
  ({K}ingston, 1980).
\newblock {\em Proc. Sympos. Pure Math.}, 38 Part 2, {A}mer. {M}ath.
  {S}oc.:583--590, 1982.

\bibitem[Chr]{Ch12}
M.~Christandl.
\newblock {PPT} square conjecture. {B}anff international {R}esearch {S}tation
  {W}orkshop: {O}perator {S}tructures in {Q}uantum {I}nformation {T}heory
  (2012).

\bibitem[CMHW19]{CMW19}
Matthias Christandl, Alexander M\"{u}ller-Hermes, and Michael~M. Wolf.
\newblock When do composed maps become entanglement breaking?
\newblock {\em Ann. Henri Poincar\'{e}}, 20(7):2295--2322, 2019.

\bibitem[CYT19]{CYT19}
Lin Chen, Yu~Yang, and Wai-Shing Tang.
\newblock Positive-partial-transpose square conjecture for $n=3$.
\newblock {\em Phys. Rev. A}, 99:012337, Jan 2019.

\bibitem[Dyk93]{Dy93}
Ken Dykema.
\newblock On certain free product factors via an extended matrix model.
\newblock {\em J. Funct. Anal.}, 112(1):31--60, 1993.

\bibitem[Hal98]{Ha98}
Michael J.~W. Hall.
\newblock Random quantum correlations and density operator distributions.
\newblock {\em Phys. Lett. A}, 242(3):123--129, 1998.

\bibitem[HHH96]{HHH96}
Micha{\l} Horodecki, Pawe{\l} Horodecki, and Ryszard Horodecki.
\newblock Separability of mixed states: necessary and sufficient conditions.
\newblock {\em Phys. Lett. A}, 223(1-2):1--8, 1996.

\bibitem[KMP18]{KMP18}
Matthew Kennedy, Nicholas~A. Manor, and Vern~I. Paulsen.
\newblock Composition of {PPT} maps.
\newblock {\em Quantum Inf. Comput.}, 18(5-6):472--480, 2018.

\bibitem[MP13]{MP13}
James~A. Mingo and Mihai Popa.
\newblock Real second order freeness and {H}aar orthogonal matrices.
\newblock {\em J. Math. Phys.}, 54(5):051701, 35, 2013.

\bibitem[MP19]{MP19}
James~A. Mingo and Mihai Popa.
\newblock Freeness and the partial transposes of {W}ishart random matrices.
\newblock {\em Canad. J. Math.}, 71(3):659--681, 2019.

\bibitem[MP22]{MP22}
James~A. Mingo and Mihai Popa.
\newblock The partial transpose and asymptotic free independence for {W}ishart
  random matrices, {II}.
\newblock {\em Pacific J. Math.}, 317(2):387--421, 2022.

\bibitem[MS17]{MS17}
James~A. Mingo and Roland Speicher.
\newblock {\em Free Probability and Random Matrices}, volume~35 of {\em Fields
  Institute Monographs}.
\newblock Springer, New York, NY, 2017.

\bibitem[Nic93]{Ni93}
Alexandru Nica.
\newblock Asymptotically free families of random unitaries in symmetric groups.
\newblock {\em Pacific J. Math.}, 157(2):295--310, 1993.

\bibitem[NS06]{NS06}
Alexandru Nica and Roland Speicher.
\newblock {\em Lectures on the combinatorics of free probability}, volume 335
  of {\em London Mathematical Society Lecture Note Series}.
\newblock Cambridge University Press, Cambridge, 2006.

\bibitem[Per96]{Pe96}
Asher Peres.
\newblock Separability criterion for density matrices.
\newblock {\em Phys. Rev. Lett.}, 77(8):1413--1415, 1996.

\bibitem[RJP18]{RJP18}
Mizanur Rahaman, Samuel Jaques, and Vern~I. Paulsen.
\newblock Eventually entanglement breaking maps.
\newblock {\em J. Math. Phys.}, 59(6):062201, 11, 2018.

\bibitem[Sho02]{Sh02}
Peter~W. Shor.
\newblock Additivity of the classical capacity of entanglement-breaking quantum
  channels.
\newblock {\em J. Math. Phys.}, 43(9):4334--4340, 2002.

\bibitem[SS12]{SmSm12}
Graeme Smith and John~A. Smolin.
\newblock Detecting incapacity of a quantum channel.
\newblock {\em Phys. Rev. Lett.}, 108:230507, Jun 2012.

\bibitem[SZ04]{SZ04}
Hans-J\"{u}rgen Sommers and Karol \.{Z}yczkowski.
\newblock Statistical properties of random density matrices.
\newblock {\em J. Phys. A}, 37(35):8457--8466, 2004.

\bibitem[Voi91]{Vo91}
Dan Voiculescu.
\newblock Limit laws for random matrices and free products.
\newblock {\em Invent. Math.}, 104(1):201--220, 1991.

\bibitem[Voi98]{Vo98}
Dan Voiculescu.
\newblock A strengthened asymptotic freeness result for random matrices with
  applications to free entropy.
\newblock {\em Internat. Math. Res. Notices}, (1):41--63, 1998.

\bibitem[Wor76]{Wo76}
S.~L. Woronowicz.
\newblock Positive maps of low dimensional matrix algebras.
\newblock {\em Rep. Math. Phys.}, 10(2):165--183, 1976.

\bibitem[ZS01]{ZS01}
Karol \.{Z}yczkowski and Hans-J\"{u}rgen Sommers.
\newblock Induced measures in the space of mixed quantum states.
\newblock {\em J. Phys. A}, 34(35):7111--7125, 2001.

\end{thebibliography}

\end{document}